\numberwithin{equation}{section}
\newtheorem{theorem}{Theorem}[section]
\newtheorem{definition}{Definition}[section]
\newtheorem{lemma}{Lemma}[section]
\newtheorem{remark}{Remark}[section]
\newtheorem{Assumption}{Assumption}[section]
\newtheorem{algorithm}{Algorithm}[section]
\begin{document}
\vspace{1.3cm}
\title
{Two-step inertial Bregman proximal alternating linearized minimization algorithm for nonconvex and nonsmooth problems{\thanks{Supported by Scientific Research Project of Tianjin Municipal Education Commission (2022ZD007).}}}
\author{ {\sc Chenzheng Guo{\thanks{Email: g13526199036@163.com}},
Jing Zhao{\thanks{Corresponding author. Email: zhaojing200103@163.com}}}\\
\small College of Science, Civil Aviation University of China, Tianjin 300300, China\\
}
\date{}
\date{}
\maketitle{} {\bf Abstract.}
In this paper, we study an algorithm for solving a class of nonconvex and nonsmooth nonseparable optimization problems. Based on proximal alternating linearized minimization (PALM), we propose a new iterative algorithm which combines two-step inertial extrapolation and Bregman distance. By constructing appropriate benefit function, with the help of Kurdyka--{\L}ojasiewicz property we establish the convergence of the whole sequence generated by proposed algorithm. We apply the algorithm to sparse nonnegative matrix factorization, signal recovery, quadratic fractional programming problem and show the effectiveness of proposed algorithm.
\vskip 0.4 true cm
\noindent {\bf Key words}:  Nonconvex and nonsmooth nonseparable optimization; Proximal alternating linearized minimization; Inertial extrapolation; Bregman distance; Kurdyka--{\L}ojasiewicz property.
\pagestyle{myheadings}
\section{Introduction}

\hspace*{\parindent} Since nonconvex functions usually approximate the original problem better than convex functions, a large number of problems require to solve nonconvex minimization problems. In recent years, many scholars have paid attention to study nonconvex optimization problems, and some effective and stable algorithms have been proposed. In this paper, we will consider solving the following nonconvex and nonsmooth nonseparable optimization problem:
\begin{equation}
\label{MP}
\min_{x\in \mathbb R^{l} ,y\in\mathbb R^{m}}   L(x,y)=f(x)+Q(x,y)+g(y),
\end{equation}
where $f:\mathbb{R}^l\rightarrow {(-\infty,+\infty]}$, $g:\mathbb{R}^m\rightarrow {(-\infty,+\infty]}$ are  proper lower semicontinuous, $Q(x,y):\mathbb{R}^l\times \mathbb{R}^m \rightarrow \mathbb{R}$ is continuously differentiable and $\nabla Q$ is  Lipschitz continuous  on bounded subsets. {Assume} that $\inf_{\mathbb{R}^l\times \mathbb{R}^m}L>-\infty$, $\inf_{\mathbb{R}^l}f>-\infty$, $\inf_{\mathbb{R}^m}g>-\infty$. Note that here and throughout the paper, no convexity is assumed on the objective function. 


Many nonconvex optimization problems have coupling terms in the objective function, so many application problems can be modeled as (\ref{MP}), e.g., signal recovery and image processing \cite{NNZC,GZZF,B}, matrix decomposition \cite{BST}, quadratic fractional programming \cite{BCV}, compressed sensing \cite{ABS,DC}, applied statistics and machine learning \cite{BPC}, etc. 

Utilizing the two-block structure, a natural method to solve problem (\ref{MP}) is the alternating minimization method, which, from a given initial point $\left ( x_{0},y_{0}   \right ) \in \mathbb{R}^l  \times \mathbb{R}^m $, generates the iterative sequence $\left \{ \left ( x_{k},y_{k}   \right )  \right \} $ via the scheme:
\begin{equation}
\label{TB}
\begin{cases}
x_{k+1}\in \arg\min_{ x\in \mathbb{R}^l}\{L(x,y_k)\},\\
y_{k+1}\in \arg\min_{y\in \mathbb{R}^m}\{L(x_{k+1},y)\}.\\
\end{cases}
\end{equation}
In the literature, the alternating minimization method are also called the Gauss–Seidel method or the block coordinate descent method.

The convergence of the alternating minimization method was first studied for the convex case. If the function $L$ is convex and continuously differentiable, and it is strict convex of one argument while the other is fixed, then every limit point of the sequence $\left \{ \left ( x_{k},y_{k}   \right )  \right \} $ generated by (\ref{TB}) minimizes $L$ \cite{BT,BN,BTL}. Removing the strict convexity assumption, one can modify the alternating minimization algorithm by adding a proximal term \cite{A}, resulting in the scheme:

\begin{equation}
\label{PAMA}
\begin{cases}
x_{k+1}\in \arg\min_{ x\in \mathbb{R}^l}\{L(x,y_k)+\frac{1}{2\lambda_k}\|x-x_k\|^2_2\},\\
y_{k+1}\in \arg\min_{y\in \mathbb{R}^m}\{L(x_{k+1},y)+\frac{1}{2\mu_k}\|y-y_k\|^2_2\},\\
\end{cases}
\end{equation}
where $\{\lambda_k\}_{k\in\mathbb{N}}$ and $\{\mu_k\}_{k\in\mathbb{N}}$ are positive sequences. In \cite{ABR}, {Attouch} et al. applied (\ref{PAMA}) to solve nonconvex problem (\ref{MP}) and proved the sequence generated by the proximal alternating minimization (PAM) algorithm (\ref{PAMA}) converges to a critical point. Because the proximal alternating minimization algorithm requires an exact solution at each iteration step, the subproblems in (\ref{PAMA}) may be very hard to solve. The linearization technique is one of the effective methods to overcome the absence of an analytic solution to the subproblem. To overcome these drawbacks in the above algorithm, by the proximal linearization of each subproblem in (\ref{PAMA}), {Bolte} et al. \cite{BST} were inspired by the proximal forward-backward algorithm and proposed the proximal alternating linearized minimization (PALM) algorithm under the condition that the coupling term $Q(x,y)$ is continuously differentiable. That is to say, for the $x$-subproblem, the function $Q(x,y)$ is linearized at the point $x_k$, and for the $y$-subproblem, the function $Q(x,y)$ is linearized at the point $y_k$ that yields the following algorithm:
\begin{equation}
\label{PALM}
\begin{cases}
x_{k+1}\in \arg\min_{ x\in \mathbb{R}^l}\{f(x)+\langle x,\nabla_xQ(x_k,y_k)\rangle+\frac{1}{2\lambda_k}\|x-x_k\|^2_2\},\\
y_{k+1}\in \arg\min_{y\in \mathbb{R}^m}\{g(y)+\langle y,\nabla_yQ(x_{k+1},y_k)\rangle+\frac{1}{2\mu_k}\|y-y_k\|^2_2\}.\\
\end{cases}
\end{equation}
In this way, if proximal operator of $f$ or $g$ have a closed-form or can be easily calculated, then the problem (\ref{PALM}) can be easily solved. They proved that each bounded sequence generated by PALM globally converges to a critical point. When $f$ and $g$ are continuously differentiable, a natural idea is to linearize $f$ and $g$. Corresponding algorithm was proposed by {Nikolova} et al. \cite{NT}, called alternating structure-adapted proximal gradient descent (ASAP) algorithm. With the help of Kurdyka--{\L}ojasiewicz property they establish the convergence of the whole sequence generated by proposed algorithm.

The inertial extrapolation technique has been widely used to accelerate the iterative algorithms for convex  and nonconvex optimizations, since the cost of each iteration stays basically unchanged \cite{OCB,BC}. The inertial scheme, starting from the so-called heavy ball method of {Polyak} \cite{PB}, was recently proved to be very efficient in accelerating numerical methods, especially the first-order methods. The main  feature of the idea is that the new iteration uses the previous two or more iterations. 

Recently, there are increasing interests in studying inertial type algorithms, such as {Nicolas} et al. \cite{LNP} proposed inertial versions of block coordinate descent methods for solving nonconvex nonsmooth optimization problems. {Feng} et al. \cite{FZZZ} focused on a minimization optimization model that is nonconvex and nonsmooth and established an inertial Douglas–Rachford splitting (IDRS) algorithm, which incorporate the inertial technique into the framework of the Douglas–Rachford splitting algorithm.
Specially, for solving problem (\ref{MP}), {Zhang and He} \cite{ZH} introduced an  inertial version of the proximal alternating minimization method, {Pock and Sabach \cite{PS}} proposed the following inertial proximal alternating linearized minimization (iPALM) algorithm: 
\begin{equation}
\label{iPALM}
\begin{cases}
u_{1k}=x_k+\alpha _{1k}(x_k-x_{k-1}), v_{1k}=x_k+\beta _{1k}(x_k-x_{k-1}),\\
x_{k+1}\in \arg\min_{ x\in \mathbb{R}^l}\{f(x)+\langle x,\nabla_xQ(v_{1k},y_k)\rangle+\frac{1}{2\lambda_k}\|x-u_{1k}\|^2_2\},\\
u_{2k}=y_k+\alpha _{2k}(y_k-y_{k-1}), v_{2k}=y_k+\beta _{2k}(y_k-y_{k-1}),\\
y_{k+1}\in \arg\min_{y\in \mathbb{R}^m}\{g(y)+\langle y,\nabla_yQ(x_{k+1},v_{2k})\rangle+\frac{1}{2\mu_k}\|y-u_{2k}\|^2_2\},\\
\end{cases}
\end{equation}
where $\alpha _{1k},\alpha _{2k},\beta _{1k},\beta _{2k}\in \left [ 0,1 \right ] $. They proved that the generated sequence globally converges to critical point of the objective function under the condition of the Kurdyka--{\L}ojasiewicz property. When $\alpha _{1k}=\alpha _{2k}=\beta _{1k}=\beta _{2k}=0$, iPALM reduces to 
PALM. Then {Cai} et al. \cite{GCH} presented a Gauss--Seidel type inertial proximal alternating linearized minimization (GiPALM) algorithm for solving problem \eqref{MP}:
\begin{equation}
\label{GiPALM}
\begin{cases}
x_{k+1}\in \arg\min_{ x\in \mathbb{R}^n}\{f(x)+\langle x,\nabla_xQ(\tilde{x}_{k},\tilde{y}_k)\rangle+\frac{1}{2\lambda_k}\|x-\tilde{x}_{k}\|^2_2\},\\
\tilde{x}_{k+1}=x_{k+1}+\alpha (x_{k+1}-\tilde{x}_{k}), \alpha \in [0,1),\\
y_{k+1}\in \arg\min_{y\in \mathbb{R}^m}\{g(y)+\langle y,\nabla_yQ(\tilde{x}_{k+1},\tilde{y}_{k})\rangle+\frac{1}{2\mu_k}\|y-\tilde{y}_{k}\|^2_2\},\\
\tilde{y}_{k+1}=y_{k+1}+\beta(y_{k+1}-\tilde{y}_{k}), \beta \in [0,1).
\end{cases}
\end{equation}
In order to use the existing information as much as possible to further improve the numerical performance, {Han} et al. \cite{XX} proposed a new inertial version of proximal alternating linearized minimization (NiPALM) algorithm, which inherits both advantages of iPALM and GiPALM. And {Xu} et al. \cite{YX} proposed accelerated alternating structure-adapted proximal gradient descent algorithm by using inertial extrapolation technique.

Bregman distance is a useful substitute for a distance, obtained from the various choices of functions. The applications of Bregman distance instead of the norm give us alternative ways for more flexibility in the selection of regularization. Choosing appropriate Bregeman distances can obtain colsed form of solution for solving subproblem. Bregman distance regularization is also an effective way to improve the numerical results of the algorithm. 
In {\cite{ZQ}}, the authors constructed the following two-step inertial Bregman alternating minimization (TiBAM) algorithm using the information of the previous three iterates: 
\begin{equation}
\label{TiBAM}
\begin{cases}
x_{k+1}\in \arg\min_{ x\in \mathbb{R}^l}\{L(x,y_k)+D_{\phi_1}(x,x_k)+\alpha_{1k} \langle x,x_{k-1}-x_k\rangle+\alpha_{2k} \langle x,x_{k-2}-x_{k-1}\rangle\},\\
y_{k+1}\in \arg\min_{y\in \mathbb{R}^m}\{L(x_{k+1},y)+D_{\phi_2}(y,y_k)+\beta_{1k} \langle y,y_{k-1}-y_k\rangle+\beta_{2k} \langle y,y_{k-2}-y_{k-1}\},
\end{cases}
\end{equation}
where $D_{\phi_i}(i=1,2)$ denote the Bregman distance with respect to  $\phi_i(i=1,2)$, respectively. The convergence is obtained provided an appropriate regularization of the objective function satisfies the Kurdyka--{\L}ojasiewicz inequality. Based on alternating minimization algorithm, {Chao} et al. \cite{Z} proposed  the following inertial alternating minimization with Bregman distance (BIAM) algorithm:
\begin{equation}
\label{BIAM}
\begin{cases}
x_{k+1}\in \arg\min_{ x\in \mathbb{R}^l}\{f(x)+Q(x,\hat{y} _k)+\lambda _{k} D_{\phi_1}(x,\hat{x} _k)\},\\
\hat{x}_{k+1}=x_{k+1}+\alpha(x_{k+1}-\hat x_{k}), \alpha \in [0,1),\\
y_{k+1}\in \arg\min_{ y\in \mathbb{R}^m}\{g(y)+Q(\hat{x}_{k+1},y)+\mu _{k} D_{\phi_2}(y,\hat{y} _k)\},\\
\hat{y}_{k+1}=y_{k+1}+\beta(y_{k+1}-\hat y_{k}), \beta \in [0,1).
\end{cases}
\end{equation}
Suppose that the benefit function satisfies the Kurdyka--{\L}ojasiewicz property and the parameters are selected appropriately, they proved the convergence of BIAM algorithm.

In this paper, based on the proximal alternating minimization algorithm, we combine inertial technique, linearization technique and Bregman distance to construct two-step inertial Bregman proximal alternating linearized minimization algorithm using the information from the previous three iterations, and give a global convergence analysis of the algorithm under the assumption that the benefit function satisfies the Kurdyka--{\L}ojasiewicz property.

The article is organized as follows: In Section 2, we recall some concepts and important lemmas which will be used in the proof of main results. In Section 3, we present the two-step inertial Bregman proximal alternating linearized minimization algorithm and show its convergence. Finally, in Section 4, the preliminary numerical examples on sparse nonnegative matrix factorization, signal recovery and quadratic fractional programming problem are provided to illustrate the behavior of the proposed algorithm.
\section{Preliminaries}
\subsection{Subdifferentials of nonconvex and nonsmooth functions}
\hspace*{\parindent}Consider the Euclidean vector space $\mathbb{R}^d$ of dimension $d\geq 1$ and denote the standard inner product by $\langle\cdot,\cdot\rangle$ and the induced norm by $\|\cdot\|_2$. We use $\omega(x_k)=\{x:\exists x_{k_j} \rightarrow x\}$ to stand for the limit set of $\{x_k\}_{k\in \mathbb{N}}$.

Given a function $f: \mathbb{R}^d \rightarrow(-\infty,+\infty]$, the {\it domain} and the {\it epigraph} of $f$ are defined by 
dom$f:=\{x\in \mathbb{R}^d: f(x)<+\infty\}$ and epi$f:=\{(x,\alpha )\in \mathbb{R}^d\times \mathbb{R}: f(x)\le \alpha \}$,
respectively. We say that $f$ is {\it proper} if dom$f\neq \emptyset$, $f$ is {\it lower semicontinuous} if epi$f$ is closed. Further we recall some generalized subdifferential notions and the basic properties which are needed in the paper, see \cite{MV,BDL}.
\begin{definition}
\rm
\label{Def21}(Subdifferentials)
Let $f: \mathbb{R}^d \rightarrow(-\infty,+\infty]$ be a proper and lower semicontinuous function. For $x\in$ {\rm dom}$f $, the Fr\'{e}chet subdifferential of $f$ at $x$, written  $\hat{\partial}f(x)$, is the set of vectors $v\in \mathbb{R}^d$ which satisfy
$$\liminf_{y\rightarrow x}\frac{1}{\|x-y\|_2}[f(y)-f(x)-\langle v,y-x\rangle]\geq 0.$$
If $x\not\in$ {\rm dom}$f$, let $\hat{\partial}f(x)=\emptyset$.  The limiting-subdifferential \cite{MV}, or simply the subdifferential for short, of $f$ at $x\in$ {\rm dom}$f$, written $\partial f(x)$, is defined as follows:
$$\partial f(x):=\{v\in \mathbb{R}^d: \exists x_k\rightarrow x, f(x_k)\rightarrow f(x), v_k\in \hat{\partial}f(x_k), v_k\rightarrow v\}.$$
\end{definition}

\begin{remark}
\label{rem21}
\rm
(a) The above definition implies that $\hat{\partial}f(x)\subseteq \partial f(x)$ for each $x\in \mathbb{R}^d$, where the first set is convex and closed while the second one is closed (see\cite{RW}).

(b) (Closedness of $\partial f$)  Let $\{x_k\}_{k\in \mathbb{N}}$ and $\{v_k\}_{k\in \mathbb{N}}$ be sequences in $\mathbb{R}^d$ such that $v_k \in \partial f(x_k)$ for all $k\in \mathbb{N}$. If $(x_k,v_k)\rightarrow (x,v)$ and $f(x_k)\rightarrow f(x)$ as $k\rightarrow \infty$, then $v \in \partial f (x)$.

(c) A necessary (but not sufficient) condition for $x\in \mathbb{R}^d$ to be a minimizer of $f$ is
$$0\in \partial f(x).\eqno{(2.1)}$$
A point {satisfying} (2.1) is called limiting-critical or simply critical. The set of critical points of $f$ is denoted by crit$f$.

(d) If $f: \mathbb{R}^d \rightarrow(-\infty,+\infty]$ be a proper and lower semicontinuous and $h : \mathbb{R}^d \rightarrow \mathbb{R}$ is a continuously differentiable function, then $\partial(f+h)(x) = \partial f (x)+\nabla h(x)$ for all $x \in \mathbb{R}^d$.
\end{remark}

In what follows, we will consider the problem of finding a critical point $(x^\ast,y^\ast)\in$dom$L$. 
\subsection{The Kurdyka--{\L}ojasiewicz property}
\hspace*{\parindent}Now we state the definition of the Kurdyka--{\L}ojasiewicz property. This class of functions will play a crucial role in the convergence results of the proposed algorithm.

Let $f: \mathbb{R}^d \rightarrow(-\infty,+\infty]$ be a proper and lower semicontinuous function. For $\eta_1$, $\eta_2$ such that $-\infty <\eta_1<\eta_2 \leq +\infty$, we set
$$[\eta_1<f<\eta_2]=\{x\in \mathbb{R}^d:\eta_1<f(x)<\eta_2\}.$$
\begin{definition}
\rm
\label{Def22}(Kurdyka--{\L}ojasiewicz property)
The function  $f: \mathbb{R}^d \rightarrow(-\infty,+\infty]$  is said to have the Kurdyka--{\L}ojasiewicz (KL) property at $x^\ast\in$dom$f$ if there exist $\eta\in (0,+\infty]$, a neighborhood $U$ of $x^\ast$ and a continuous concave function $\varphi:[0,\eta)\rightarrow \mathbb{R}_{+}$ such that $\varphi(0)=0$, $\varphi$ is $C^1$ on $(0,\eta)$, for all $s\in(0,\eta)$ it is $\varphi'(s)>0$ and for all $x$ in $U\cap[f(x^\ast)<f<f(x^\ast)+\eta]$ the Kurdyka--{\L}ojasiewicz  inequality holds, $$\varphi'(f(x)-f(x^\ast)){\rm dist}(0,\partial f(x))\geq 1.$$
Proper lower semicontinuous functions which satisfy the Kurdyka--{\L}ojasiewicz inequality at each point of {its domain} are called KL functions.
\end{definition}

A remarkable aspect of KL functions is that they are ubiquitous in applications, typical functions that have the KL property include strongly convex functions, real analytic functions, semi-algebraic functions \cite{ABR}, subanalytic functions \cite{WCX} and log-exp functions. We note that the $L_q$ quasi-norm $\|x\|q:= (\Sigma_i|x_i|^q)^{1/q}$ with $0 < q< 1$, the sup-norm $\|x\|_\infty:= \max_i|x_i|$, $\|Ax-b\|_q^q$ with $0 < q< 1$ and $\|Ax-b\|_\infty$ are all semi-algebraic functions for any matrix $A$, and so they have the KL property. 

It is known that both real analytic and semi-algebraic functions are subanalytic. Generally speaking, the sum of two subanalytic functions is not necessarily subanalytic. However, for two subanalytic functions, if at least one function maps
bounded sets to bounded sets, then their sum is also subanalytic, as shown in \cite{XY}. In particular, the sum of a subanalytic function and an analytic function is subanalytic.  Typical subanalytic functions
include: $\|Ax-b\|^2_2+\lambda\|y\|_q^q$, $\|Ax-b\|_2^2+\lambda\Sigma_i(y_i^2 +\varepsilon)^{q/2}$ and so on.
\subsection{Bregman distance}
\begin{definition}
\rm
{ A function $f$ is said convex if dom$f$ is a convex set and if, for all $x$, $y\in$dom$f$, $\alpha\in[0,1]$,
$$f(\alpha x+(1-\alpha)y)\leq \alpha f(x)+(1-\alpha)f(y).$$
$f$ is said $\theta$-strongly convex with $\theta> 0$ if $f-\frac{\theta}{2}\|\cdot\|^2$ is convex, i.e.,
$$f(\alpha x+(1-\alpha)y)\leq \alpha f(x)+(1-\alpha)f(y)-\frac{1}{2}\theta\alpha(1-\alpha)\|x-y\|^2$$
for all $x$, $y\in$dom$f$ and  $\alpha\in[0,1]$.}
\end{definition}
Suppose that the function $f$ is differentiable. Then $f$ is convex if and only if dom$f$ is a convex set and
$$f(x)\ge f(y)+\langle \nabla f(y),x-y\rangle$$
holds for all $x$, $y\in$dom$f$. Moreover, $f$ is $\theta$-strongly convex with $\theta> 0$ if and only if
$$f(x)\ge f(y)+\langle \nabla f(y),x-y\rangle+\frac{\theta}{2}\|x-y\|^2$$
for all $x$, $y\in$dom$f$.

The Bregman distance is briefly described below, as a generalization of {the} squared Euclidean distance, the Bregman distance shares many similar nice properties of the Euclidean distance. However, the Bregman distance is not a real metric, since it does not satisfy the triangle inequality nor symmetry. But the use of the Bregman distance makes the algorithm more widely applicable, so the Bregman distance plays an important role in various iterative algorithms.

\begin{definition}
\rm
\label{Defbreg}
Let $\phi:\mathbb{R}^d \rightarrow(-\infty,+\infty]$ be a convex and G\^{a}teaux differentiable function.
The function $D_\phi :$ dom$\phi\,\,\times$ intdom$\phi \rightarrow [0,+\infty)$, defined by
$$D_\phi(x,y)=\phi(x)-\phi(y)-\langle \nabla\phi(y),x-y\rangle,$$
is called the Bregman distance with respect to $\phi$.
\end{definition}

{From the above definition,  it follows  that
$$D_\phi(x,y)\geq\frac{\theta}{2}\|x-y\|^2,
\eqno{(2.2)}$$
if $\phi$ is $\theta$-strongly convex. Furthermore, it is easy to verify that $D_\phi(x,y)\geq 0$ and $D_\phi(x,y)\geq D_\phi(z,y)$ if $z\in [y,x]$.  We take the non-Euclidean distance \cite{HKM} as an example. For a positive definite
matrix $M$,  let $\|x\|_M:=\sqrt{\langle x,Mx\rangle}$ and take $\phi(x)=\|x\|_M^2$. Then $D_\phi(x,y)=\|x-y\|_M^2$. In particular, if $M$ is an identity, $D_\phi(x,y)$ reduces to the Euclidean squared distance.
\subsection{Proximal operator for nonconvex functions}
\begin{definition}
\rm
\label{Def25}(Well-definedness of proximal operator)
Let $\sigma:\mathbb{R}^d \rightarrow(-\infty,+\infty]$ be a proper and lower semicontinuous function with $\inf_{\mathbb{R}^d}\sigma>-\infty$.
Given $x\in \mathbb{R}^d$ and $t> 0$, the proximal operator associated to $\sigma$ is defined respectively by
$$prox_{t\sigma} \left ( x \right ) :=\arg\min_{ u\in \mathbb{R}^d}\left \{ \sigma \left (u  \right ) +\frac{1}{2t}\left \| u-x \right \|^2_2  \right \}.
\eqno{(2.3)}$$\  
Then, for every $t\in \left ( 0,\infty  \right )$ the set $prox_{\frac{1}{t}\sigma } \left ( x \right )$ is nonempty and compact.
\end{definition}

{It is well-known that the proximal forward-backward scheme for minimizing the sum of a smooth function $h$ with a nonsmooth one $\sigma$ can simply be viewed as the proximal regularization of $h$ linearized at a given point $x^{k} $:
$$x_{k+1}\in \arg\min_{ x\in \mathbb{R}^l}\{\sigma (x)+\langle x-x_k,\nabla h(x_k)\rangle+\frac{t}{2} \|x-x_k\|^2_2\},\ (t>0),\eqno{(2.4)}\\$$
that is, using the proximal operator notation defined in (2.3), we get
$$x_{k+1} \in prox_{\frac{1}{t}\sigma } \left (x_{k}  -\frac{1}{t}  \nabla h(x_k)\right ).$$

\subsection{Some basic convergence properties}
\hspace*{\parindent}In the following we give some basic properties which are needed in the convergence analysis. We begin by recalling the well-known and important descent lemma, see \cite{BT,MV}.
\begin{lemma}{\rm (Descent lemma\cite{BT})}
\label{lem21}
Let $h: \mathbb{R}^{d}\rightarrow \mathbb{R}$ be a continuously differentiable function with gradient $\nabla h$ assumed $L_{h}$-Lipschitz continuous. Then
$$\left | h(u)-h(v)-\left \langle u-v,\nabla h(v) \right \rangle  \right | \le \frac{L_{h}}{2}\left \| u-v \right \| ^{2}_2 ,\ \forall u,v\in \mathbb R^{d}. \eqno {\rm(2.5)} $$
\end{lemma}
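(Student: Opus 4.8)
The plan is to prove the estimate by reducing it to a one–dimensional problem along the segment joining $v$ and $u$ and then applying the Lipschitz bound on $\nabla h$ inside an integral. Fix $u,v\in\mathbb{R}^d$ and define the auxiliary function $\varphi:[0,1]\rightarrow\mathbb{R}$ by $\varphi(t)=h\bigl(v+t(u-v)\bigr)$. Since $h$ is continuously differentiable, $\varphi$ is $C^1$ on $[0,1]$ with $\varphi'(t)=\langle\nabla h(v+t(u-v)),\,u-v\rangle$, so the fundamental theorem of calculus gives
$$
h(u)-h(v)=\varphi(1)-\varphi(0)=\int_0^1\langle\nabla h(v+t(u-v)),\,u-v\rangle\,dt .
$$

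Next I would subtract the linear term: writing $\langle u-v,\nabla h(v)\rangle=\int_0^1\langle\nabla h(v),u-v\rangle\,dt$ and combining the two integrals yields
$$
h(u)-h(v)-\langle u-v,\nabla h(v)\rangle=\int_0^1\bigl\langle\nabla h(v+t(u-v))-\nabla h(v),\,u-v\bigr\rangle\,dt .
$$
Taking absolute values, using the triangle inequality for integrals, the Cauchy--Schwarz inequality, and finally the $L_h$-Lipschitz continuity of $\nabla h$ (which bounds $\|\nabla h(v+t(u-v))-\nabla h(v)\|_2$ by $L_h\,t\,\|u-v\|_2$), I obtain
$$
\bigl|h(u)-h(v)-\langle u-v,\nabla h(v)\rangle\bigr|\le\int_0^1 L_h\,t\,\|u-v\|_2^2\,dt=\frac{L_h}{2}\|u-v\|_2^2 ,
$$
which is exactly (2.5).

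There is no real obstacle here: the only points that need a word of care are that the integrand $t\mapsto\langle\nabla h(v+t(u-v))-\nabla h(v),u-v\rangle$ is continuous on $[0,1]$ (hence Riemann integrable) because $\nabla h$ is continuous, and that the argument uses only $C^1$ regularity of $h$ together with the Lipschitz hypothesis, not twice differentiability. Everything else is a routine chain of standard inequalities, so the proof is short and self-contained.
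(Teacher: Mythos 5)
Your proof is correct and complete; it is the standard integral-representation argument (fundamental theorem of calculus along the segment, then Cauchy--Schwarz and the Lipschitz bound on $\nabla h$), which is exactly the proof found in the cited reference. The paper itself states this lemma without proof, citing \cite{BT}, so there is nothing further to compare.
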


\begin{Assumption}
\label{Assumption21}
\rm
 Let  $\{u_k\}_{k\in \mathbb{N}}:=\{(z_k, z_{k-1}, z_{k-2})\}_{k\in \mathbb{N}}$ be a sequence in $\mathbb{R}^{3p}$, where $z_k, z_{k-1},\\
z_{k-2}\in \mathbb{R}^p$. For convenience we use the abbreviation $\triangle_k := \|z_k-z_{k-1}\|_2$ for $k\in \mathbb{N}$.  Let $F : \mathbb{R}^{3p}\rightarrow {(-\infty,+\infty]}$ be a proper lower semicontinuous function. Then, letting $a$ and $b$ are fixed positive constants, we consider {a} sequence $\{u_k\}_{k\in \mathbb{N}}$
{satisfying} the following conditions.

(H1) (Sufficient decrease condition). For each $k\in \mathbb{N}$, it holds that
$$F(u_{k+1})+ a\triangle_{k+1}^2\leq F(u_k),$$
i.e., $F(z_{k+1}, z_{k}, z_{k-1})+ a\|z_{k+1}-z_k\|_2^2\leq F(z_k, z_{k-1}, z_{k-2})$ for $u_k=(z_k,z_{k-1},z_{k-2})$;\\

(H2) (Relative error condition). For each $k\in \mathbb{N}$, there exists $w_{k+1}\in \partial F(u_{k+1})$ such that
$$\|w_{k+1}\|_2\leq b(\triangle_{k+1}+\triangle_k+\triangle_{k-1}),$$
i.e., ${\|w_{k+1}\|_2\leq b(\|z_{k+1}-z_k\|_2+\|z_k-z_{k-1}\|_2+ \|z_{k-1}-z_{k-2}\|_2)}$;\\

(H3) (Continuity condition). {There exists a subsequence $\{u_{k_j}\}_{j\in \mathbb{N}}$ and $\tilde{u}$} such that
$$u_{k_j}\rightarrow \tilde{u}\ \ {\rm and}\ \ F(u_{k_j})\rightarrow F(\tilde{u})$$
as $j\rightarrow\infty$.

\noindent
\end{Assumption}
An abstract convergence result for inexact descent methods (see\cite{ZQ}) is given below, which can be applied to the two-step inertial algorithm. So it will be used to analyze the convergence of the proposed algorithm.
\begin{lemma}
\label{lem22}
{\rm(convergence to a critical point\cite{ZQ})} {\it  Let $F: \mathbb{R}^{3p}\rightarrow {(-\infty,+\infty]}$ be a proper lower semicontinuous function  and $\{u_k\}_{k\in \mathbb{N}}= \{(z_k, z_{k-1},z_{k-2})\}_{k\in \mathbb{N}}$ be a sequence which satisfies conditions {\rm(H1)}, {\rm(H2)} and {\rm(H3)}. Suppose $F$ have the KL property at the cluster point $\tilde{u}$ specified in {\rm(H3)}.
Then the sequence $\{z_k\}_{k\in \mathbb{N}}$ has a finite length, i.e., $\sum_{k=0}^{\infty}\|z_{k+1}-z_k\|_2<+\infty$. Moreover the sequence $\{z_k\}_{k\in \mathbb{N}}$  converges to $\bar{z}=\tilde{z}$
as $k \rightarrow\infty$, $\bar{u}=(\bar{z},\bar{z},\bar{z})$ is a critical point of $F$ where $\tilde{u}=(\tilde{z}, \tilde{z}, \tilde{z})$.}
\end{lemma}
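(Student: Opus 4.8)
The plan is to run the standard Kurdyka--{\L}ojasiewicz descent argument on the lifted sequence $\{u_k\}\subset\mathbb{R}^{3p}$ and then transfer the conclusion down to $\{z_k\}\subset\mathbb{R}^p$. First I would extract the elementary consequences of (H1)--(H3): by (H1) the values $\{F(u_k)\}$ are nonincreasing, and since (H3) provides a subsequence with $F(u_{k_j})\to F(\tilde u)$, the whole sequence decreases to the finite limit $\zeta:=F(\tilde u)$; telescoping (H1) then gives $a\sum_{k\ge 0}\triangle_{k+1}^2\le F(u_0)-\zeta<+\infty$, hence $\triangle_k\to 0$, i.e. $\|z_k-z_{k-1}\|_2\to 0$. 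Since $u_{k_j}=(z_{k_j},z_{k_j-1},z_{k_j-2})\to\tilde u$ while consecutive blocks differ by terms tending to $0$, the limit must be of the form $\tilde u=(\tilde z,\tilde z,\tilde z)$, and the same observation shows $u_{k_j+1}\to\tilde u$.

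Next I would verify $\tilde u\in{\rm crit}\,F$. By (H2) there is $w_{k+1}\in\partial F(u_{k+1})$ with $\|w_{k+1}\|_2\le b(\triangle_{k+1}+\triangle_k+\triangle_{k-1})\to 0$; taking $k=k_j$ and using $u_{k_j+1}\to\tilde u$, $F(u_{k_j+1})\to\zeta=F(\tilde u)$ together with closedness of the limiting subdifferential (Remark~\ref{rem21}(b)) yields $0\in\partial F(\tilde u)$.

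The substance of the proof is the finite-length estimate. If $F(u_{k_0})=\zeta$ for some $k_0$, then by monotonicity $F(u_k)\equiv\zeta$ for all $k\ge k_0$ and (H1) forces $\triangle_k=0$ for $k>k_0$, so $\{z_k\}$ is eventually constant and there is nothing to prove. Otherwise $F(u_k)>\zeta$ for all $k$. Let $U,\eta,\varphi$ be the KL data of $F$ at $\tilde u$ and fix a ball $B(\tilde u,\delta)\subseteq U$. For the KL inequality to be applicable we must keep $u_k$ inside $U$, so I would choose an index $K$ along the subsequence so large that $u_K\in B(\tilde u,\delta)$, $\zeta<F(u_K)<\zeta+\eta$, and each of $\|z_K-\tilde z\|_2$, $\triangle_K$, $\triangle_{K-1}$, $\triangle_{K-2}$, $\varphi(F(u_K)-\zeta)$ is as small as needed (possible since $\triangle_k\to 0$, $F(u_k)\downarrow\zeta$ and $\varphi(0)=0$). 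On $U\cap[\zeta<F<\zeta+\eta]$ the KL inequality gives $\varphi'(F(u_k)-\zeta)\ge 1/{\rm dist}(0,\partial F(u_k))\ge 1/\|w_k\|_2$, and combining this with concavity of $\varphi$ and with (H1),
$$\varphi(F(u_k)-\zeta)-\varphi(F(u_{k+1})-\zeta)\ \ge\ \varphi'(F(u_k)-\zeta)\bigl(F(u_k)-F(u_{k+1})\bigr)\ \ge\ \frac{a\,\triangle_{k+1}^2}{b(\triangle_k+\triangle_{k-1}+\triangle_{k-2})}.$$
Denoting the left-hand side by $\Delta\varphi_k$, this reads $\triangle_{k+1}^2\le\tfrac{b}{a}\,\Delta\varphi_k(\triangle_k+\triangle_{k-1}+\triangle_{k-2})$; applying $\sqrt{st}\le\tfrac32 s+\tfrac16 t$, summing from $K$ to $N$, telescoping $\sum_k\Delta\varphi_k\le\varphi(F(u_K)-\zeta)$, and reindexing the three lagged sums (each coincides with $\sum_k\triangle_{k+1}$ up to the boundary terms $\triangle_K,\triangle_{K-1},\triangle_{K-2}$) yields
$$\sum_{k=K}^{N}\triangle_{k+1}\ \le\ \frac{3b}{a}\,\varphi\bigl(F(u_K)-\zeta\bigr)+\frac13\bigl(3\triangle_K+2\triangle_{K-1}+\triangle_{K-2}\bigr),$$
which is uniform in $N$. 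Hence $\sum_k\triangle_k<+\infty$, so $\{z_k\}$ is Cauchy and converges to some $\bar z$; since $\tilde z$ is a cluster point, $\bar z=\tilde z$, and then $\bar u=(\bar z,\bar z,\bar z)=\tilde u$ is the critical point found above.

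I expect the real obstacle to be the trapping step just used: one must ensure that $u_k\in U$ for \emph{every} $k\ge K$, so that the displayed inequalities are licit. I would handle this by a simultaneous induction: assuming $u_K,\dots,u_n\in B(\tilde u,\delta)\cap[\zeta<F<\zeta+\eta]$, the summation bound controls $\|z_{n+1}-z_K\|_2\le\sum_{k=K}^{n}\triangle_{k+1}$ by a quantity which, thanks to the smallness of the data at $K$, stays below $\delta-\|z_K-\tilde z\|_2$, so $z_{n+1}$ --- and then (again because $\triangle_k\to 0$) the whole block $u_{n+1}$ --- remains in $B(\tilde u,\delta)$, while $F(u_{n+1})<\zeta+\eta$ is automatic from the monotone decrease. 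Closing this induction is the crux; everything else is bookkeeping.
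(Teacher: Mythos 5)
Your argument is correct and is exactly the standard Attouch--Bolte--Svaiter-style KL descent proof used in the cited source \cite{ZQ}; the paper itself offers no proof of this lemma, only the citation. All the key points are in place --- the monotone decrease to $\zeta=F(\tilde u)$, the identification $\tilde u=(\tilde z,\tilde z,\tilde z)$ via $\triangle_k\to 0$, the closedness argument for $0\in\partial F(\tilde u)$, the concavity/KL estimate with the three-term lag from (H2), the Young-inequality splitting with the $\tfrac12$-absorption, and the trapping induction keeping $u_k$ in the KL neighborhood --- so nothing further is needed.
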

\section{Two-step inertial Bregman proximal alternating linearized minimization algorithm}

\hspace*{\parindent}In this section we investigate the convergence property of the two-step inertial Bregman proximal alternating linearized minimization algorithm for solving nonconvex and nonsmooth nonseparable optimization problems. We assume that $\phi_i$ is $\theta_i$-strongly convex and  $\nabla\phi_i$ is $L_{\nabla\phi_i}$-Lipschitz continuous for $i=1, 2$. Set $\theta=\min\{\theta_1,\theta_2\}$.

\begin{algorithm}
\rm
Choose  $(x_0,y_0)\in{\text dom}L$ and set  $(x_{-i},y_{-i})=(x_0,y_0)$, $i=1, 2$. Take the sequences $\{\alpha_{1k}\}$, $\{\beta_{1k}\}\subseteq[0,\alpha_1]$ and $\{\alpha_{2k}\}$, $\{\beta_{2k}\}\subseteq[0,\alpha_2]$, where $\alpha_1\geq0$ and $\alpha_2\geq0$. For $k\geq 0$, let
$$
\begin{cases}
\aligned
x_{k+1}\in \arg\min_{ x\in \mathbb{R}^l}\{&f(x)+\langle x,\nabla_xQ(x_k,y_k)\rangle+D_{\phi_1}(x,x_k)+\alpha_{1k} \langle x,x_{k-1}-x_k\rangle\\
&+\alpha_{2k} \langle x,x_{k-2}-x_{k-1}\rangle\},\\
y_{k+1}\in \arg\min_{y\in \mathbb{R}^m}\{&g(y)+\langle y,\nabla_yQ(x_{k+1},y_k)\rangle+D_{\phi_2}(y,y_k)+\beta_{1k} \langle y,y_{k-1}-y_k\rangle\\
&+\beta_{2k} \langle y,y_{k-2}-y_{k-1}\rangle\},
\endaligned
\end{cases}
\eqno {\rm(3.1)}
$$
where $D_{\phi_1}$ and $D_{\phi_2}$ denote the Bregman distance with respect to  $\phi_1$ and $\phi_2$, respectively.
\end{algorithm}

Since $f$ and $g$ are proper, lower semicontinuous, $f$  and $g$ are bounded from below, and $\phi_i$ ($i=1,2$) is strongly convex,  we get that it makes sense to define the iterative scheme, meaning that the existence of $(x_{k+1},y_{k+1})$ is guaranteed for each $k\geq 0$.
{\begin{remark}\rm We give special cases of Algorithm 3.1.
\begin{itemize}
\item[(i)]
Let $\lambda$ and $\mu$ are positive constants. If we take $\phi_1(x)=\frac{1}{2\lambda}\|x\|^2_2$ and $\phi_2(y)=\frac{1}{2\mu}\|y\|^2_2$ for all $x\in \mathbb{R}^l$ and $y\in \mathbb{R}^m$, then the iterative Algorithm 3.1 becomes
$$
\begin{cases}
\aligned
x_{k+1}\in \arg\min_{ x\in \mathbb{R}^l}\{&f(x)+\langle x,\nabla_xQ(x_k,y_k)\rangle+\frac{1}{2\lambda}\|x-x_k\|^2_2+\alpha_{1k} \langle x,x_{k-1}-x_k\rangle\\
&+\alpha_{2k} \langle x,x_{k-2}-x_{k-1}\rangle\},\\
y_{k+1}\in \arg\min_{y\in \mathbb{R}^m}\{&g(y)+\langle y,\nabla_yQ(x_{k+1},y_k)\rangle+\frac{1}{2\mu}\|y-y_k\|^2_2+\beta_{1k} \langle y,y_{k-1}-y_k\rangle\\
&+\beta_{2k} \langle y,y_{k-2}-y_{k-1}\rangle\}.
\endaligned
\end{cases}
\eqno {\rm(3.2)}
$$
In this case, letting $\alpha_{2k}\equiv\beta_{2k}\equiv 0$ for all $k\geq 0$,
  (3.2) becomes the one-step inertial algorithm
$$
\begin{cases}
\aligned
x_{k+1}\in \arg\min_{ x\in \mathbb{R}^l}\{&f(x)+\langle x,\nabla_xQ(x_k,y_k)\rangle+\frac{1}{2\lambda}\|x-x_k\|^2_2+\alpha_{1k} \langle x,x_{k-1}-x_k\rangle\},\\
y_{k+1}\in \arg\min_{y\in \mathbb{R}^m}\{&g(y)+\langle y,\nabla_yQ(x_{k+1},y_k)\rangle+\frac{1}{2\mu}\|y-y_k\|^2_2+\beta_{1k} \langle y,y_{k-1}-y_k\rangle\}.
\endaligned
\end{cases}
$$
Letting $\alpha_{2k}\equiv\beta_{2k}=\alpha_{1k}\equiv\beta_{1k}\equiv 0$ for all $k\geq 0$,  (3.2) becomes the proximal alternating linearized minimization algorithm (\ref{PALM}) with $\lambda_k\equiv\lambda$ and $\mu_k\equiv\mu$ for $k\geq 0$.
\item[(ii)]
Letting $\alpha_{2k}\equiv\beta_{2k}\equiv 0$ for all $k\geq 0$,
  (3.1) becomes
the one-step inertial Bregman proximal alternating linearized minimization algorithm (iBPALM):
$$
\begin{cases}
\aligned
x_{k+1}\in \arg\min_{ x\in \mathbb{R}^l}\{&f(x)+\langle x,\nabla_xQ(x_k,y_k)\rangle+D_{\phi_1}(x,x_k)+\alpha_{1k} \langle x,x_{k-1}-x_k\rangle\},\\
y_{k+1}\in \arg\min_{y\in \mathbb{R}^m}\{&g(y)+\langle y,\nabla_yQ(x_{k+1},y_k)\rangle+D_{\phi_2}(y,y_k)+\beta_{1k} \langle y,y_{k-1}-y_k\rangle\}.
\endaligned
\end{cases}
$$
\item[(iii)]
Letting $\alpha_{2k}\equiv\beta_{2k}=\alpha_{1k}\equiv\beta_{1k}\equiv 0$ for all $k\geq 0$,  (3.1) becomes the Bregman proximal alternating minimization algorithm (BPALM):
$$
\begin{cases}
\aligned
x_{k+1}\in \arg\min_{ x\in \mathbb{R}^l}\{&f(x)+\langle x,\nabla_xQ(x_k,y_k)\rangle+D_{\phi_1}(x,x_k)\},\\
y_{k+1}\in \arg\min_{y\in \mathbb{R}^m}\{&g(y)+\langle y,\nabla_yQ(x_{k+1},y_k)\rangle+D_{\phi_2}(y,y_k)\}.
\endaligned
\end{cases}
$$
\end{itemize}
\end{remark}}

In order to get convergence analysis of our algorithm, the following assumptions required are given.
\begin{Assumption}
\label{Assumption31}
\rm

(i) 
For any fixed $y$, the partial gradient $\nabla_{x} Q$ is globally Lipschitz with module $L_1{\left ( y \right )}$, that is,
$$\left \|\nabla_{x} Q\left ( x_{1}  ,y  \right ) - \nabla_{x} Q\left ( x_{2},y  \right ) \right \|\le L_1{\left ( y \right )}\left \| x_{1}-x_{2}   \right \|, \ \forall x_{1} ,x_{2} \in \mathbb R^{l}.  $$
Likewise, for any fixed $x$, the partial gradient $\nabla_{y} Q$ is globally Lipschitz with module $L_2{\left ( x \right )}$,
$$\left \|\nabla_{y} Q\left ( x,y_{1} \right ) - \nabla_{y} Q\left ( x,y_{2} \right ) \right \|\le L_2{\left ( x \right )}\left \| y_{1}-y_{2}   \right \| , \ \forall y_{1} ,y_{2} \in \mathbb R^{m}.  $$

(ii) For $i=1,2$, there exists $L _{i}^{+} > 0$ such that
$$\inf\left \{ L_{1}\left ( y_{k}  \right ):k\in N  \right \} \ge L _{1}^{-}\ \ {\rm and}\ \inf\left \{ L_{2}\left ( x_{k}  \right ):k\in N  \right \} \ge L _{2}^{-},\eqno{(2.5)}\\ $$
 And $\theta _{1} >L_{1}^{+}$, $\theta _{2} >L_{2}^{+}$ hold, let $\rho =\min\{\theta _{1} -L_{1}^{+}\ , \theta _{2} -L_{2}^{+}\}$.

\end{Assumption}
In all of what follows, let $\{(x_k,y_k)\}_{k\in \mathbb{N}}$ be the sequence generated by Algorithm 3.1. Furthermore, for more convenient notation we abbreviate $z_k=(x_k,y_k)$,  $\triangle_k=\|z_k-z_{k-1}\|_2$ for $k\geq 0$, and introduce an benefit function
 $$H(u,v,w):= L(u)+\frac{\alpha_1+\alpha_2}{2}\|u-v\|_2^2+\frac{\alpha_2}{2}\|v-w\|_2^2.\eqno {\rm(3.3)}$$
  Note that $H(u,v,w)= L(u)$ for $u=v=w$.

\begin{lemma}
\label{lem31}
{\it Let $2(\alpha_1+\alpha_2)<\rho $ and the sequence $\{z_k\}_{k\in \mathbb{N}}$ be generated by Algorithm 3.1. Then

{\rm (i)} the sequence $\{H(z_{k},z_{k-1},z_{k-2})\}_{k\in \mathbb{N}}$  is monotonically nonincreasing and convergent, and it holds that
$$H(z_{k+1},z_{k},z_{k-1})+a\triangle_{k+1}^2\leq H(z_{k},z_{k-1},z_{k-2}),\ k\geq 0,\eqno{(3.4)}$$
where $a=\frac{\rho-2(\alpha_1+\alpha_2)}{2}>0$;

{\rm (ii)} it holds that $\sum_{k=0}^\infty\triangle_{k}^2<+\infty$, and thus $\lim_{k\rightarrow\infty}\triangle_{k}=0$;

{\rm (iii)} the sequence $\{L(x_k,y_k)\}_{k\in \mathbb{N}}$ is convergent.
}
\end{lemma}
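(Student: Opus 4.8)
The plan is to obtain the sufficient–decrease estimate (3.4) directly from the two minimization subproblems in (3.1), and then read off (ii) and (iii) as easy consequences.

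\emph{Step 1 (one–step estimate from optimality).} Since $x_{k+1}$ minimizes the $x$-subproblem in (3.1), comparing its objective value with that at the feasible point $x=x_k$ and using $D_{\phi_1}(x_k,x_k)=0$ yields
\[
f(x_{k+1})-f(x_k)+\langle x_{k+1}-x_k,\nabla_xQ(x_k,y_k)\rangle+D_{\phi_1}(x_{k+1},x_k)+\alpha_{1k}\langle x_{k+1}-x_k,x_{k-1}-x_k\rangle+\alpha_{2k}\langle x_{k+1}-x_k,x_{k-2}-x_{k-1}\rangle\le 0.
\]
I then apply the descent lemma (Lemma \ref{lem21}) to $x\mapsto Q(x,y_k)$, whose gradient is $L_1(y_k)$-Lipschitz by Assumption \ref{Assumption31}(i), to bound $\langle x_{k+1}-x_k,\nabla_xQ(x_k,y_k)\rangle$ below by $Q(x_{k+1},y_k)-Q(x_k,y_k)-\tfrac{L_1(y_k)}{2}\|x_{k+1}-x_k\|_2^2$, and use $D_{\phi_1}(x_{k+1},x_k)\ge\tfrac{\theta_1}{2}\|x_{k+1}-x_k\|_2^2$ from (2.2). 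Carrying out the identical argument for the $y$-subproblem and adding the two inequalities, the coupling values $Q(x_{k+1},y_k)$ cancel and I obtain
\[
L(z_{k+1})+\tfrac{\theta_1-L_1(y_k)}{2}\|x_{k+1}-x_k\|_2^2+\tfrac{\theta_2-L_2(x_{k+1})}{2}\|y_{k+1}-y_k\|_2^2+I_x^k+I_y^k\le L(z_k),
\]
where $I_x^k,I_y^k$ collect the four inertial inner-product terms in the respective blocks.

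\emph{Step 2 (absorbing the inertial terms; proof of (i)).} By Assumption \ref{Assumption31}(ii) one has $\theta_i-L_i(\cdot)\ge\theta_i-L_i^{+}\ge\rho$ along the iterates, so the two quadratic terms above are at least $\tfrac{\rho}{2}\triangle_{k+1}^2$ (using $\|x\|_2^2+\|y\|_2^2=\|z\|_2^2$). Each inertial inner product is controlled by Cauchy--Schwarz and Young's inequality, e.g. $-\alpha_{1k}\langle x_{k+1}-x_k,x_{k-1}-x_k\rangle\le\tfrac{\alpha_1}{2}\|x_{k+1}-x_k\|_2^2+\tfrac{\alpha_1}{2}\|x_k-x_{k-1}\|_2^2$ since $\alpha_{1k}\le\alpha_1$, and likewise for the terms with $\alpha_{2k},\beta_{1k},\beta_{2k}$; summing over both blocks gives $-(I_x^k+I_y^k)\le\tfrac{\alpha_1+\alpha_2}{2}\triangle_{k+1}^2+\tfrac{\alpha_1}{2}\triangle_k^2+\tfrac{\alpha_2}{2}\triangle_{k-1}^2$. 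Substituting this and then adding $\tfrac{\alpha_1+\alpha_2}{2}\triangle_{k+1}^2+\tfrac{\alpha_2}{2}\triangle_k^2$ to both sides reassembles exactly $H(z_{k+1},z_k,z_{k-1})$ on the left and $H(z_k,z_{k-1},z_{k-2})$ on the right by the definition (3.3), with the residual $-\bigl(\tfrac{\rho}{2}-(\alpha_1+\alpha_2)\bigr)\triangle_{k+1}^2$; this is precisely (3.4) with $a=\tfrac{\rho-2(\alpha_1+\alpha_2)}{2}>0$ under $2(\alpha_1+\alpha_2)<\rho$. Consequently $\{H(z_k,z_{k-1},z_{k-2})\}$ is nonincreasing, and since $H(u,v,w)\ge L(u)\ge\inf_{\mathbb{R}^l\times\mathbb{R}^m}L>-\infty$ it is bounded below, hence convergent.

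\emph{Step 3 (proof of (ii) and (iii)).} Summing (3.4) over $k=0,\dots,N$ telescopes to $a\sum_{k=1}^{N+1}\triangle_k^2\le H(z_0,z_{-1},z_{-2})-\inf L<+\infty$; letting $N\to\infty$ gives $\sum_{k}\triangle_k^2<+\infty$ and therefore $\triangle_k\to 0$, which is (ii). For (iii), (3.3) gives $L(z_k)=H(z_k,z_{k-1},z_{k-2})-\tfrac{\alpha_1+\alpha_2}{2}\triangle_k^2-\tfrac{\alpha_2}{2}\triangle_{k-1}^2$; the right-hand side converges because $H$ converges by (i) and $\triangle_k,\triangle_{k-1}\to 0$ by (ii), so $\{L(x_k,y_k)\}$ converges. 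The computations are elementary throughout; the only delicate point is the bookkeeping in Step 2 — the Young splitting of the four cross terms must produce exactly the coefficients $\tfrac{\alpha_1+\alpha_2}{2}$ on $\triangle_{k+1}^2$, $\tfrac{\alpha_1}{2}$ on $\triangle_k^2$ and $\tfrac{\alpha_2}{2}$ on $\triangle_{k-1}^2$ so that the three-step memory function $H$ in (3.3) telescopes with a strictly positive leftover $a$, and one must invoke the uniform upper bounds $L_i^{+}$ along the iterates from Assumption \ref{Assumption31}(ii) to replace $\theta_i-L_i(\cdot)$ by $\rho$.
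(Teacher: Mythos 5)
Your proof is correct and follows essentially the same route as the paper: compare the subproblem objective at $x_{k+1}$ with its value at $x_k$, invoke the descent lemma and the $\theta_i$-strong convexity bound (2.2), split the inertial cross terms by Young's inequality with the uniform bounds $\alpha_{1k}\le\alpha_1$, $\alpha_{2k}\le\alpha_2$ and $\theta_i-L_i^{+}\ge\rho$, and then add $\tfrac{\alpha_1+\alpha_2}{2}\triangle_{k+1}^2+\tfrac{\alpha_2}{2}\triangle_k^2$ to both sides to reassemble $H$ with leftover $a=\tfrac{\rho-2(\alpha_1+\alpha_2)}{2}$. The telescoping for (ii) and the identity $L(z_k)=H(z_k,z_{k-1},z_{k-2})-\tfrac{\alpha_1+\alpha_2}{2}\triangle_k^2-\tfrac{\alpha_2}{2}\triangle_{k-1}^2$ for (iii) are exactly the paper's arguments.
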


\begin{proof} (i) From Algorithm 3.1, we have
$$
\aligned
&f(x_{k+1} ) +\langle x _{k+1},\nabla_xQ(x_k,y_k)\rangle+D_{\phi_1}(x_{k+1},x_k)+\alpha_{1k} \langle x_{k+1},x_{k-1}-x_k\rangle\\
&+\alpha_{2k} \langle x_{k+1},x_{k-2}-x_{k-1}\rangle\\
\leq &f(x_{k} )+\langle x _{k},\nabla_xQ(x_k,y_k)\rangle+D_{\phi_1}(x_{k},x_k)+\alpha_{1k} \langle x_{k},x_{k-1}-x_k\rangle\\
&+\alpha_{2k} \langle x_{k},x_{k-2}-x_{k-1}\rangle\\
=&f(x_{k} )+\langle x _{k},\nabla_xQ(x_k,y_k)\rangle+\alpha_{1k} \langle x_{k},x_{k-1}-x_k\rangle+\alpha_{2k} \langle x_{k},x_{k-2}-x_{k-1}\rangle.
\endaligned \eqno{(3.5)}
$$
From Lemma 2.1, we have
$$Q\left ( x_{k+1},y_{k}   \right ) \le Q\left (x_{k},y_{k} \right ) +   \left \langle x_{k+1}-x_{k},\nabla_x Q\left ( x_{k},y_{k} \right )  \right \rangle+\frac{L_{1}(y_{k} ) }2{\left \| x_{k+1}-x_{k} \right \| ^{2}_2 }.\eqno{(3.6)}$$
By (3.5) and (3.6), we have
$$
\aligned
&f(x_{k+1} ) +Q(x_{k+1},y_k)+D_{\phi_1}(x_{k+1},x_k)+\alpha_{1k} \langle x_{k+1},x_{k-1}-x_k\rangle+\alpha_{2k} \langle x_{k+1},x_{k-2}-x_{k-1}\rangle\\
\le& f(x_{k+1} ) +Q\left (x_{k},y_{k} \right ) +   \left \langle x_{k+1}-x_{k},\nabla_x Q\left ( x_{k},y_{k} \right )  \right \rangle+\frac{L_{1}(y_{k} ) }2{\left \| x_{k+1}-x_{k} \right \| ^{2}_2 }+D_{\phi_1}(x_{k+1},x_k)\\
&+\alpha_{1k} \langle x_{k+1},x_{k-1}-x_k\rangle+\alpha_{2k} \langle x_{k+1},x_{k-2}-x_{k-1}\rangle\\
\le& f(x_{k} ) +Q\left (x_{k},y_{k} \right ) + \frac{L_{1}(y_{k} ) }2{\left \| x_{k+1}-x_{k} \right \| ^{2}_2 }+\alpha_{1k} \langle x_{k},x_{k-1}-x_k\rangle+\alpha_{2k} \langle x_{k},x_{k-2}-x_{k-1}\rangle,
\endaligned \eqno{(3.7)}
$$
which implies that
$$
\aligned
&L(x_{k+1},y_k)+D_{\phi_1}(x_{k+1},x_k)- \frac{L_{1}(y_{k} ) }2{\left \| x_{k+1}-x_{k} \right \| ^{2}_2 }\\
\le& L(x_{k},y_k)+\alpha_{1k} \langle x_{k+1}-x_{k},x_k-x_{k-1}\rangle+\alpha_{2k} \langle x_{k+1}-x_{k},x_{k-1}-x_{k-2}\rangle. 
\endaligned\eqno{(3.8)}$$
By (2.2) and Assumption 3.1 (ii), it follows from (3.8)  that
$$\aligned
&L(x_{k+1},y_k)+\frac{1}{2}\left ( \theta _{1} -L_{1}^{+}  \right ) \|x_{k+1}-x_k\|_2^2\\
\leq&
L(x_{k},y_k)+\alpha_{1k} \langle x_{k+1}-x_k,x_k-x_{k-1}\rangle+\alpha_{2k} \langle x_{k+1}-x_k,x_{k-1}-x_{k-2}\rangle.
\endaligned \eqno{(3.9)}$$
Similarly, we have
$$\aligned
&L(x_{k+1},y_{k+1})+\frac{1}{2}\left ( \theta _{2} -L_{2}^{+}  \right )\|y_{k+1}-y_k\|_2^2\\
\leq&
L(x_{k+1},y_k)+\beta_{1k} \langle y_{k+1}-y_k,y_k-y_{k-1}\rangle+\beta_{2k} \langle y_{k+1}-y_k,y_{k-1}-y_{k-2}\rangle.\endaligned\eqno{(3.10)}$$
Adding up the inequalities (3.9) and (3.10), we get
$$
\aligned
&L(x_{k+1},y_{k+1})+\frac{1}{2}\left ( \theta _{1} -L_{1}^{+}  \right )\|x_{k+1}-x_k\|_2^2+\frac{1}{2}\left ( \theta _{2} -L_{2}^{+}  \right )\|y_{k+1}-y_k\|_2^2\\
\leq&L(x_{k},y_k)+\alpha_{1k} \langle x_{k+1}-x_k,x_k-x_{k-1}\rangle+\alpha_{2k} \langle x_{k+1}-x_k,x_{k-1}-x_{k-2}\rangle\\
&+\beta_{1k} \langle y_{k+1}-y_k,y_k-y_{k-1}\rangle+\beta_{2k} \langle y_{k+1}-y_k,y_{k-1}-y_{k-2}\rangle\\
\leq&L(x_{k},y_k)+\frac{\alpha_{1k}}{2} (\|x_{k+1}-x_k\|_2^2+\|x_k-x_{k-1}\|_2^2)
+\frac{\alpha_{2k}}{2}(\|x_{k+1}-x_k\|_2^2+\|x_{k-1}-x_{k-2}\|_2^2)\\
&+\frac{\beta_{1k}}{2}(\|y_{k+1}-y_k\|_2^2+\|y_k-y_{k-1}\|_2^2)
+\frac{\beta_{2k}}{2}(\|y_{k+1}-y_k\|_2^2+\|y_{k-1}-y_{k-2}\|_2^2)\\
\leq& L(x_{k},y_k)+(\frac{\alpha_{1k}+\alpha_{2k}}{2} \|x_{k+1}-x_k\|_2^2+\frac{\beta_{1k}+\beta _{2k}}{2} \|y_{k+1}-y_k\|_2^2)+(\frac{\alpha_{1k}}{2} \|x_k-x_{k-1}\|_2^2\\
&+\frac{\beta_{1k}}{2} \|y_k-y_{k-1}\|_2^2)+(\frac{\alpha_{2k}}{2} \|x_{k-1}-x_{k-2}\|_2^2+\frac{\beta_{2k}}{2} \|y_{k-1}-y_{k-2}\|_2^2).
\endaligned \eqno{(3.11)}
$$
Since $\rho =\min\{\theta _{1} -L_{1}^{+}\ , \theta _{2} -L_{2}^{+}\}$, $\{\alpha_{1k}\}$, $\{\beta_{1k}\}\subseteq[0,\alpha_1]$ and $\{\alpha_{2k}\}$, $\{\beta_{2k}\}\subseteq[0,\alpha_2]$, it follows that
$$\aligned
&L(z_{k+1})+\frac{\rho }{2}\|z_{k+1}-z_k\|_2^2\\
\leq& L(z_{k})+\frac{\alpha_1+\alpha_2}{2}\|z_{k+1}-z_k\|_2^2+\frac{\alpha_1}{2}\|z_{k}-z_{k-1}\|_2^2
+\frac{\alpha_2}{2}\|z_{k-1}-z_{k-2}\|_2^2,
\endaligned$$
which implies that
$$ L(z_{k+1})+\frac{\alpha_1+\alpha_2}{2}\triangle_{k+1}^2+\frac{\alpha_2}{2}\triangle_{k}^2
+\frac{\rho -2(\alpha_1+\alpha_2)}{2}\triangle_{k+1}^2
\leq L(z_{k})+\frac{\alpha_1+\alpha_2}{2}\triangle_{k}^2+\frac{\alpha_2}{2}\triangle_{k-1}^2.$$
So, (3.4) holds and the sequence $\{H(z_k,z_{k-1},z_{k-2})\}_{k\in \mathbb{N}}$  is monotonically nonincreasing. By assumption, $L$ is bounded from below, and hence $\{H(z_k,z_{k-1},z_{k-2})\}_{k\in \mathbb{N}}$ converges.

(ii) It follows from (3.4) that
$$a\triangle_{k+1}^2\leq H(z_{k},z_{k-1},z_{k-2})-H(z_{k+1},z_{k},z_{k-1}).\eqno{(3.12)}$$
Summing up (3.12) from $k= 0,\cdots,K$ yields (note that $H(z_0, z_{-1},z_{-2})=L(z_0) $)
$$\sum_{k=0}^K a \triangle_{k+1}^2\leq L(z_0)-H(z_{K+1},z_{K},z_{K-1})\leq L(z_0)-\inf_{\mathbb{R}^l\times \mathbb{R}^m}L<\infty.$$
Let $K\rightarrow +\infty$, we have that (ii) holds.

(iii) By
$$L(x_k,y_k)=L(z_k)=H(z_{k},z_{k-1},z_{k-2})-\frac{\alpha_1+
\alpha_2}{2}\triangle_{k}^2-\frac{\alpha_2}{2}\triangle_{k-1}^2,$$
it follows from (i) and (ii) that the sequence $\{L(x_k,y_k)\}_{k\in \mathbb{N}}$ converges and
$$\lim_{k\rightarrow\infty}L(x_k,y_k)=\lim_{k\rightarrow\infty}H(z_{k},z_{k-1},z_{k-2}).$$
\end{proof}

\begin{lemma}
\label{lem32}
{\it Let the sequence $\{z_k\}_{k\in \mathbb{N}}$ be generated by Algorithm 3.1. For $k\geq 0$, define
$$
\aligned
&w_{k+1}\\
=&(\nabla_xQ(x_{k+1},y_{k+1})-\nabla_xQ(x_{k},y_{k})
+\nabla\phi_1(x_k)-\nabla\phi_1(x_{k+1}),\nabla_yQ(x_{k+1},y_{k+1})-\nabla_yQ(x_{k+1},y_{k})\\
&+\nabla\phi_2(y_k)-\nabla\phi_2(y_{k+1}),0,0,0,0)\\
&+(\alpha_{1k}(x_k-x_{k-1})+\alpha_{2k}(x_{k-1}-x_{k-2}),\beta_{1k}(y_k-y_{k-1})+\beta_{2k}(y_{k-1}-y_{k-2}),0,0,0,0)\\
&+((\alpha_{1}+\alpha_2)(x_{k+1}-x_{k}),(\alpha_{1}+\alpha_2)(y_{k+1}-y_{k}),0,0,0,0)\\
&+(0,0,(\alpha_{1}+\alpha_{2})(x_{k}-x_{k+1})+\alpha_2(x_{k}-x_{k-1}),
(\alpha_{1}+\alpha_{2})(y_{k}-y_{k+1})+\alpha_2(y_{k}-y_{k-1}),0,0)\\
&+(0,0,0,0,\alpha_{2}(x_{k-1}-x_{k}),\alpha_{2}(y_{k-1}-y_{k})).
\endaligned\eqno{(3.13)}
$$
Then$$w_{k+1}\in \partial H(z_{k+1},z_{k},z_{k-1})={\partial}H(x_{k+1},y_{k+1},x_{k},y_{k},x_{k-1},y_{k-1}).\eqno{(3.14)}$$
Moreover, if the sequence $\{z_k\}_{k\in \mathbb{N}}$ is bounded, then there exists $M>0$ such that 
$$
\aligned &\|w_{k+1}\|_2\\
\leq&(M+L_{\nabla\phi_1}+2\alpha_1+2\alpha_2)\|x_{k+1}-x_k\|_2
+(M+L_2{\left ( x _{k+1} \right )}+L_{\nabla\phi_2}+2\alpha_1+2\alpha_2)\|y_{k+1}-y_k\|_2\\
&+(\alpha_1+2\alpha_2)(\|x_{k}-x_{k-1}\|_2+\|y_{k}-y_{k-1}\|_2)+\alpha_{2}(\|x_{k-1}-x_{k-2}\|_2+\|y_{k-1}-y_{k-2}\|_2),
\endaligned \eqno{(3.15)}
$$}
\end{lemma}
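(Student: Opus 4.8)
The plan is to establish the inclusion (3.14) by computing $\partial H$ from the sum/product structure of $H$ and then invoking the first-order (Fermat) optimality conditions of the two subproblems in (3.1); once (3.14) is in hand, the estimate (3.15) follows by decomposing $w_{k+1}$ into the five block tuples displayed in (3.13) and bounding each summand by the triangle inequality together with the relevant Lipschitz moduli.

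For (3.14), observe that in the variable $u=(x,y)$ the function $H$ of (3.3) is the sum of the proper lower semicontinuous \emph{separable} function $(x,y)\mapsto f(x)+g(y)$, the $C^1$ function $(x,y)\mapsto Q(x,y)$, and the $C^1$ quadratic terms $\frac{\alpha_1+\alpha_2}{2}\|u-v\|_2^2+\frac{\alpha_2}{2}\|v-w\|_2^2$. Hence, using $\partial(f\oplus g)=\partial f\times\partial g$ together with Remark 2.1(d),
$$
\begin{aligned}
\partial H(x,y,x',y',x'',y'')=\bigl(&\partial f(x)+\nabla_xQ(x,y)+(\alpha_1+\alpha_2)(x-x'),\ \partial g(y)+\nabla_yQ(x,y)+(\alpha_1+\alpha_2)(y-y'),\\
&(\alpha_1+\alpha_2)(x'-x)+\alpha_2(x'-x''),\ (\alpha_1+\alpha_2)(y'-y)+\alpha_2(y'-y''),\ \alpha_2(x''-x'),\ \alpha_2(y''-y')\bigr).
\end{aligned}
$$
Evaluating this at $(x_{k+1},y_{k+1},x_k,y_k,x_{k-1},y_{k-1})$, the last four coordinates coincide verbatim with the third through sixth entries of $w_{k+1}$ in (3.13). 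For the first coordinate I would write the optimality condition of the $x$-update in (3.1): since $\nabla_x D_{\phi_1}(\cdot,x_k)=\nabla\phi_1(\cdot)-\nabla\phi_1(x_k)$, Fermat's rule (Remark 2.1(c),(d)) gives
$$0\in\partial f(x_{k+1})+\nabla_xQ(x_k,y_k)+\nabla\phi_1(x_{k+1})-\nabla\phi_1(x_k)+\alpha_{1k}(x_{k-1}-x_k)+\alpha_{2k}(x_{k-2}-x_{k-1}),$$
so that $\nabla\phi_1(x_k)-\nabla\phi_1(x_{k+1})-\nabla_xQ(x_k,y_k)+\alpha_{1k}(x_k-x_{k-1})+\alpha_{2k}(x_{k-1}-x_{k-2})\in\partial f(x_{k+1})$; adding $\nabla_xQ(x_{k+1},y_{k+1})+(\alpha_1+\alpha_2)(x_{k+1}-x_k)$ reproduces exactly the first entry of $w_{k+1}$. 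The second coordinate is handled identically from the optimality condition of the $y$-update. This proves $w_{k+1}\in\partial H(z_{k+1},z_k,z_{k-1})$.

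For (3.15), I would write $w_{k+1}$ as the sum of the five tuples in (3.13) and use $\|(a_1,\dots,a_6)\|_2\le\sum_{i=1}^{6}\|a_i\|_2$ with the triangle inequality. Since $\{z_k\}_{k\in\mathbb{N}}$ is bounded, it lies in a bounded convex set on which $\nabla Q$ is Lipschitz; letting $M>0$ be such a modulus one gets $\|\nabla_xQ(x_{k+1},y_{k+1})-\nabla_xQ(x_k,y_k)\|\le M\|z_{k+1}-z_k\|_2\le M(\|x_{k+1}-x_k\|_2+\|y_{k+1}-y_k\|_2)$, while $\|\nabla_yQ(x_{k+1},y_{k+1})-\nabla_yQ(x_{k+1},y_k)\|\le L_2(x_{k+1})\|y_{k+1}-y_k\|_2$ by Assumption 3.1(i); the $\nabla\phi_i$ differences are controlled by $L_{\nabla\phi_i}$, and $\alpha_{1k},\beta_{1k}\le\alpha_1$, $\alpha_{2k},\beta_{2k}\le\alpha_2$. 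Collecting the contributions of the five tuples to each of $\|x_{k+1}-x_k\|_2$, $\|y_{k+1}-y_k\|_2$, $\|x_k-x_{k-1}\|_2$, $\|y_k-y_{k-1}\|_2$, $\|x_{k-1}-x_{k-2}\|_2$ and $\|y_{k-1}-y_{k-2}\|_2$ then yields precisely the coefficients in (3.15).

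The argument involves no conceptual difficulty; the one thing to watch is the bookkeeping. One has to track which of the six displacement quantities each of the five tuples (and each Lipschitz estimate) feeds into — in particular, the $x$-cross term $\nabla_xQ(x_{k+1},y_{k+1})-\nabla_xQ(x_k,y_k)$ must be split through $\|z_{k+1}-z_k\|_2\le\|x_{k+1}-x_k\|_2+\|y_{k+1}-y_k\|_2$ so that $M$ enters the coefficients of both $\|x_{k+1}-x_k\|_2$ and $\|y_{k+1}-y_k\|_2$, whereas the $y$-cross term contributes only $L_2(x_{k+1})$ to the coefficient of $\|y_{k+1}-y_k\|_2$. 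It is also worth noting that this lemma uses neither the KL property nor the strong convexity of $\phi_i$: only the convexity and Lipschitz-gradient properties of $\phi_i$ and the Lipschitz behaviour of $\nabla Q$ on bounded sets are needed.
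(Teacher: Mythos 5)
Your proposal is correct and follows essentially the same route as the paper: Fermat's rule applied to the two subproblems of (3.1) to identify elements of $\partial f(x_{k+1})$ and $\partial g(y_{k+1})$, the sum rule and the separable structure of $L$ and $H$ to assemble $w_{k+1}\in\partial H(z_{k+1},z_k,z_{k-1})$, and then the triangle inequality with the bounded-set Lipschitz modulus $M$ of $\nabla Q$, the moduli $L_2(x_{k+1})$ and $L_{\nabla\phi_i}$, and the bounds $\alpha_{1k},\beta_{1k}\le\alpha_1$, $\alpha_{2k},\beta_{2k}\le\alpha_2$ to obtain (3.15). Your explicit display of the full subdifferential of $H$ and the coefficient bookkeeping are merely a more detailed write-up of what the paper leaves implicit.
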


\begin{proof}
 By  the  definition of $x_{k+1}$, and by Remark \ref{rem21} (c), $0$ must lie in the subdifferential at point $x_{k+1}$ of the function
 $$x\longmapsto f(x)+\langle x,\nabla_xQ(x_k,y_k)\rangle+D_{\phi_1}(x,x_k)+\alpha_{1k} \langle x,x_{k-1}-x_k\rangle+\alpha_{2k} \langle x,x_{k-2}-x_{k-1}\rangle.$$
 Since $\phi$ are differential, we have
$$0\in \partial f(x_{k+1})+\nabla_x Q(x_{k},y_k)+\nabla\phi_1(x_{k+1})- \nabla\phi_1(x_k)+\alpha_{1k}(x_{k-1}-x_k)+\alpha_{2k}(x_{k-2}-x_{k-1}),$$
which implies that
$$-\nabla_x Q(x_{k},y_k)+\nabla\phi_1(x_k)-\nabla\phi_1(x_{k+1}) +\alpha_{1k}(x_k-x_{k-1})
+\alpha_{2k}(x_{k-1}-x_{k-2})\in \partial f(x_{k+1}).\eqno{(3.16)}$$
Similarly, we have
$$0\in \partial g(y_{k+1})+\nabla_y Q(x_{k+1},y_{k})+\nabla\phi_2(y_{k+1})- \nabla\phi_2(y_k)+\beta_{1k}(y_{k-1}-y_k)+\beta_{2k}(y_{k-2}-y_{k-1}),
$$
and so
$$-\nabla_y Q(x_{k+1},y_{k})+\nabla\phi_2(y_k)-\nabla\phi_2(y_{k+1}) +\beta_{1k}(y_k-y_{k-1})+\beta_{2k}(y_{k-1}-y_{k-2})\in \partial g(y_{k+1}).\eqno{(3.17)}$$
Because of the structure of $L$,  it follows that $\partial_xL(x_{k+1},y_{k+1})=
\partial f(x_{k+1})+\nabla_x Q(x_{k+1},y_{k+1})$ and $\partial_yL(x_{k+1},y_{k+1})=
\nabla_y Q(x_{k+1},y_{k+1})+\partial g(y_{k+1})$. Hence, from (3.16) and (3.17), we have
$$\aligned 
&\nabla_x Q(x_{k+1},y_{k+1})-\nabla_x Q(x_{k},y_k)+\nabla\phi_1(x_k)-\nabla\phi_1(x_{k+1})+\alpha_{1k}(x_k-x_{k-1})+\alpha_{2k}(x_{k-1}-x_{k-2})\\
&\in\partial_xL(x_{k+1},y_{k+1}),
\endaligned\eqno{(3.18)}$$
and
$$\aligned 
&\nabla_y Q(x_{k+1},y_{k+1})-\nabla_y Q(x_{k+1},y_k)+\nabla\phi_2(y_k)-\nabla\phi_2(y_{k+1})+\beta_{1k}(y_k-y_{k-1})+\beta_{2k}(y_{k-1}-y_{k-2})\\
&\in\partial_yL(x_{k+1},y_{k+1}).
\endaligned\eqno{(3.19)}$$
Since
$$\aligned
&H(z_{k+1},z_{k},z_{k-1})\\
=&L(z_{k+1})+\frac{\alpha_1+\alpha_2}{2}\|z_{k+1}-z_{k}\|_2^2
+\frac{\alpha_2}{2}\|z_{k}-z_{k-1}\|_2^2\\
=&L(x_{k+1},y_{k+1})+\frac{\alpha_1+\alpha_2}{2}(\|x_{k+1}-x_{k}\|_2^2+\|y_{k+1}-y_{k}\|_2^2)
+\frac{\alpha_2}{2}(\|x_{k}-x_{k-1}\|_2^2+\|y_{k}-y_{k-1}\|_2^2),
\endaligned $$
we have (3.14) holds. If the sequence $\{z_k\}_{k\in \mathbb{N}}$ is bounded, noting that $\nabla Q$ is  Lipschitz continuous  on bounded subsets, then there exists $M>0$ such that 
$$\aligned
&\left \|\nabla_{x} Q\left ( x_{k+1} ,y_{k+1} \right ) - \nabla_{x} Q\left ( x_{k},y_{k} \right ) ) \right \|
\le \left \|\nabla Q\left ( x_{k+1} ,y_{k+1} \right ) - \nabla Q\left ( x_{k},y_{k} \right ) ) \right \|\\
\le &M(\left \|  x_{k+1}-x_{k} \right \|^2 +\left \|  y_{k+1}-y_{k} \right \| ^2)^{\frac{1}{2}}
\le M(\left \|  x_{k+1}-x_{k} \right \| +\left \|  y_{k+1}-y_{k} \right \| ).
\endaligned $$
Hence, the inequality (3.15) follows from the definition of the sequence $\{w_{k+1}\}_{k\in\mathbb{N}}$.
\end{proof}

\begin{lemma}
\label{lem33}
{\it Suppose that the function $L$ is coercive. Let $2(\alpha_1+\alpha_2)<\rho $ and the sequence $\{z_k\}_{k\in \mathbb{N}}$ be generated by Algorithm 3.1. { Then},

{\rm (i)} the sequence $\{z_k\}_{k\in \mathbb{N}}$ is bounded and the set $\omega(z_k)$ is nonempty;

{\rm (ii)} for any $\tilde{z}\in \omega(z_k)$  and $z_{k_j}\rightarrow \tilde{z}$ as $j\rightarrow\infty$, the sequence $H(z_{k_j},z_{k_j-1},z_{k_j-2})\rightarrow H(\tilde{z},\tilde{z},\tilde{z})$ as  $j\rightarrow\infty$ and $0\in\partial L(\tilde{z})$.}
\end{lemma}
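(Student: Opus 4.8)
The plan is to read off part (i) immediately from coercivity together with the monotonicity established in Lemma 3.1, and to prove part (ii) by combining (a) the subproblem optimality in (3.1) to recover function values along a convergent subsequence with (b) the closedness of the limiting subdifferential applied to the sequence $\{w_{k+1}\}$ produced by Lemma 3.2. For (i): since $\{H(z_k,z_{k-1},z_{k-2})\}_{k\in\mathbb{N}}$ is nonincreasing by Lemma 3.1(i) and $H(u,v,w)\ge L(u)$ by (3.3), we get $L(z_k)\le H(z_k,z_{k-1},z_{k-2})\le H(z_0,z_{-1},z_{-2})=L(z_0)$ for all $k$. Hence $\{z_k\}_{k\in\mathbb{N}}$ stays in the sublevel set $\{z:L(z)\le L(z_0)\}$, which is bounded because $L$ is coercive; therefore $\{z_k\}$ is bounded and $\omega(z_k)\neq\emptyset$ by Bolzano--Weierstrass.

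For (ii), fix $\tilde{z}=(\tilde{x},\tilde{y})\in\omega(z_k)$ and a subsequence $z_{k_j}\to\tilde{z}$. Because $\triangle_k\to 0$ by Lemma 3.1(ii), we also have $z_{k_j-1}\to\tilde{z}$ and $z_{k_j-2}\to\tilde{z}$, so $(z_{k_j},z_{k_j-1},z_{k_j-2})\to(\tilde{z},\tilde{z},\tilde{z})$. Since $H(z_{k_j},z_{k_j-1},z_{k_j-2})=L(z_{k_j})+\tfrac{\alpha_1+\alpha_2}{2}\triangle_{k_j}^2+\tfrac{\alpha_2}{2}\triangle_{k_j-1}^2$, proving $H(z_{k_j},z_{k_j-1},z_{k_j-2})\to H(\tilde{z},\tilde{z},\tilde{z})=L(\tilde{z})$ reduces to showing $L(z_{k_j})\to L(\tilde{z})$. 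Lower semicontinuity gives $\liminf_j L(z_{k_j})\ge L(\tilde{z})$, so the key point is $\limsup_j L(z_{k_j})\le L(\tilde{z})$. To obtain it I would use that $x_{k_j}$ minimizes the $x$-subproblem of (3.1) at index $k_j-1$; testing that objective with the competitor $\tilde{x}$ and rearranging gives
$$
\aligned
f(x_{k_j})\leq{}& f(\tilde{x})+\langle\tilde{x}-x_{k_j},\nabla_xQ(x_{k_j-1},y_{k_j-1})\rangle+D_{\phi_1}(\tilde{x},x_{k_j-1})-D_{\phi_1}(x_{k_j},x_{k_j-1})\\
&+\alpha_{1,k_j-1}\langle\tilde{x}-x_{k_j},x_{k_j-2}-x_{k_j-1}\rangle+\alpha_{2,k_j-1}\langle\tilde{x}-x_{k_j},x_{k_j-3}-x_{k_j-2}\rangle.
\endaligned
$$
As $j\to\infty$ the inner products vanish (by continuity of $\nabla Q$ and $x_{k_j}\to\tilde{x}$, while $\triangle_{k_j-1},\triangle_{k_j-2}\to 0$ kill the inertial terms), $D_{\phi_1}(\tilde{x},x_{k_j-1})\to D_{\phi_1}(\tilde{x},\tilde{x})=0$ by continuity of $\phi_1$ and $\nabla\phi_1$, and $-D_{\phi_1}(x_{k_j},x_{k_j-1})\le 0$; hence $\limsup_j f(x_{k_j})\le f(\tilde{x})$, and with lower semicontinuity $f(x_{k_j})\to f(\tilde{x})$.

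The same argument applied to the $y$-subproblem yields $g(y_{k_j})\to g(\tilde{y})$, and $Q(x_{k_j},y_{k_j})\to Q(\tilde{x},\tilde{y})$ by continuity of $Q$; adding these, $L(z_{k_j})\to L(\tilde{z})$, which gives $H(z_{k_j},z_{k_j-1},z_{k_j-2})\to H(\tilde{z},\tilde{z},\tilde{z})$. For $0\in\partial L(\tilde{z})$: Lemma 3.2 provides $w_{k+1}\in\partial H(z_{k+1},z_k,z_{k-1})$ satisfying (3.15); since $\{z_k\}$ is bounded (so the coefficients in (3.15), in particular $L_2(x_{k+1})\le L_2^{+}$, are uniformly bounded) and $\triangle_k\to 0$, we get $\|w_{k+1}\|_2\to 0$. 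Along the subsequence, $w_{k_j}\in\partial H(z_{k_j},z_{k_j-1},z_{k_j-2})$ with $(z_{k_j},z_{k_j-1},z_{k_j-2})\to(\tilde{z},\tilde{z},\tilde{z})$, $H(z_{k_j},z_{k_j-1},z_{k_j-2})\to H(\tilde{z},\tilde{z},\tilde{z})$ and $w_{k_j}\to 0$, so closedness of the subdifferential (Remark 2.1(b)) gives $0\in\partial H(\tilde{z},\tilde{z},\tilde{z})$. Finally, $H(u,v,w)=L(u)+\tfrac{\alpha_1+\alpha_2}{2}\|u-v\|_2^2+\tfrac{\alpha_2}{2}\|v-w\|_2^2$ is the sum of $(u,v,w)\mapsto L(u)$, whose subdifferential at $(\tilde{z},\tilde{z},\tilde{z})$ is $\partial L(\tilde{z})\times\{0\}\times\{0\}$, and a $C^1$ function whose gradient vanishes at $(\tilde{z},\tilde{z},\tilde{z})$; hence by Remark 2.1(d) $\partial H(\tilde{z},\tilde{z},\tilde{z})=\partial L(\tilde{z})\times\{0\}\times\{0\}$, whence $0\in\partial L(\tilde{z})$.

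I expect the routine parts to be (i) and the subdifferential-closedness argument, and the main obstacle to be the $\limsup$ step of part (ii): correctly exploiting the minimality of $x_{k_j}$ (resp.\ $y_{k_j}$) in the appropriate subproblem, keeping careful track of which index each inertial and Bregman term is evaluated at, and verifying that each term either tends to $0$ or has a favourable sign so that $\limsup_j L(z_{k_j})\le L(\tilde{z})$ genuinely follows.
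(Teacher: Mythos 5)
Your proposal is correct and follows essentially the same route as the paper: part (i) via the level set $\{L\le L(z_0)\}$ and coercivity, and the $\limsup$ step of part (ii) by testing the $x$- and $y$-subproblems at index $k_j-1$ with the competitors $\tilde{x}$, $\tilde{y}$ and invoking lower semicontinuity (your version of the test inequality, with the linear term $\langle\,\cdot\,,\nabla_xQ(x_{k_j-1},y_{k_j-1})\rangle$ fixed at the previous iterate, is in fact the correct reading of the paper's slightly garbled display). The only divergence is the final claim $0\in\partial L(\tilde{z})$: you pass through $w_{k_j}\in\partial H\to 0$, closedness of $\partial H$, and the separable sum rule $\partial H(\tilde z,\tilde z,\tilde z)=\partial L(\tilde z)\times\{0\}\times\{0\}$, whereas the paper extracts from (3.18)--(3.19) an explicit $s_k\in\partial L(x_k,y_k)$ with $s_k\to 0$ and applies closedness of $\partial L$ directly; both arguments are valid and of comparable length.
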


\begin{proof} (i) By Lemma \ref{lem31} and $H(z_0,z_{-1},z_{-2})=L(z_0)$, we have
$$L(z_k)\leq H(z_k,z_{k-1},z_{k-2})\leq H(z_0,z_{-1},z_{-2})=L(z_0)$$
for $k\geq 0$. It is clear that the whole sequence $\{z_k\}_{k\in\mathbb{N}}$ is contained in the level set $\{z\in \mathbb{R}^l\times \mathbb{R}^m : \inf_{z\in \mathbb{R}^l\times \mathbb{R}^m}L(z)\leq L(z) \leq L(z_0)\}$. It follows from the coercivity of $L$ and $\inf_{z\in \mathbb{R}^l\times \mathbb{R}^m}L(z)>-\infty$ that  the sequence $\{z_k\}_{k\in\mathbb{N}}$ is bounded, and so the set $\omega(z_k)$ is nonempty.

(ii) For any $\tilde{z}\in \omega(z_k)$  and $z_{k_j}\rightarrow \tilde{z}$ as $j\rightarrow\infty$, we show that $L(z_{k_j})\rightarrow L(\tilde{z})$ as  $j\rightarrow\infty$. Let $\tilde{z}=(\tilde{x},\tilde{y})$. It following from Lemma \ref{lem31} (ii) that
$$\lim_{k\rightarrow\infty}\triangle_k=\lim_{k\rightarrow\infty}\|z_{k}-z_{k-1}\|_2=0,\eqno{(3.20)}$$
and so $\lim_{k\rightarrow\infty}\|x_{k}-x_{k-1}\|_2
=\lim_{k\rightarrow\infty}\|y_{k}-y_{k-1}\|_2=0$. From the definition of $x_{k}$, we have
$$\aligned
&f(x_{k})+\langle x _{k},\nabla_xQ(x_k,y_{k-1})\rangle+D_\phi(x_{k},x_{k-1})+\alpha_{1,k-1} \langle x_{k},x_{k-2}-x_{k-1}\rangle\\
&+\alpha_{2,k-1} \langle x_{k},x_{k-3}-x_{k-2}\rangle\\
\leq& f(\tilde{x})+\langle \tilde{x},\nabla_xQ(\tilde{x},y_{k-1})\rangle+D_\phi(\tilde{x},x_{k-1})+\alpha_{1,k-1} \langle \tilde{x},x_{k-2}-x_{k-1}\rangle\\
&+\alpha_{2,k-1} \langle \tilde{x},x_{k-3}-x_{k-2}\rangle,
\endaligned $$
which implies that
$$\aligned
&f(x_{k_j})+\langle x _{k_j},\nabla_xQ(x_{k_j},y_{k_j-1})\rangle+D_\phi(x_{k_j},x_{k_j-1})+\alpha_{1,k_j-1} \langle x_{k_j},x_{k_j-2}-x_{k_j-1}\rangle\\
&+\alpha_{2,k_j-1} \langle x_{k_j},x_{k_j-3}-x_{k_j-2}\rangle\\
\leq& f(\tilde{x})+\langle \tilde{x},\nabla_xQ(\tilde{x},y_{k_j-1})\rangle+D_\phi(\tilde{x},x_{k_j-1})+\alpha_{1,k_j-1} \langle \tilde{x},x_{k_j-2}-x_{k_j-1}\rangle\\
&+\alpha_{2,k_j-1} \langle \tilde{x},x_{k_j-3}-x_{k_j-2}\rangle.
\endaligned $$\\
Letting $j\overset{}{\rightarrow} \infty $, we have
$$\limsup_{j\rightarrow\infty}f(x_{k_j})\leq f(\tilde{x}).$$
By the lower semicontinuity of $f$, it follows that 
$$f(\tilde{x})\leq \liminf_{j\rightarrow\infty}f(x_{k_j}),$$
and so $\lim_{j\rightarrow\infty}f(x_{k_j})= f(\tilde{x})$. Similarly, we have
$\lim_{j\rightarrow\infty}g(y_{k_j})= g(\tilde{y})$, and hence $\lim_{j\rightarrow\infty}L(z_{k_j})= L(\tilde{z})$. By (3.20) and
$$H(z_{k_j},z_{k_j-1},z_{k_j-2})=L(z_{k_j})+\frac{\alpha_1+\alpha_2}{2}\|z_{k_j}-z_{k_j-1}\|_2^2
+\frac{\alpha_2}{2}\|z_{k_j-1}-z_{k_j-2}\|_2^2,$$
we have $\lim_{j\rightarrow\infty}H(z_{k_j},z_{k_j-1},z_{k_j-2})= L(\tilde{z})=H(\tilde{z},\tilde{z},\tilde{z})$.
It follows from (3.18) and (3.19) that
$$s_{k}\in \partial L(x_{k},y_{k}),$$
where
$$
\aligned
s_{k}=&(\nabla_xQ(x_{k},y_{k})-\nabla_xQ(x_{k-1},y_{k-1})+\nabla\phi_1(x_{k-1})-\nabla\phi_1(x_{k}),\nabla_y Q(x_{k},y_{k})-\nabla_y Q(x_{k},y_{k-1})\\
&+\nabla\phi_2(y_{k-1})-\nabla\phi_2(y_{k}))+(\alpha_{1,k-1}(x_{k-1}-x_{k-2})+\alpha_{2,k-1}(x_{k-2}-x_{k-3}),\\
&\beta_{1,k-1}(y_{k-1}-y_{k-2})+\beta_{2,k-1}(y_{k-2}-y_{k-3})).
\endaligned
$$
Since $\nabla Q$ is Lipschitz continuous on bounded subsets and $\nabla \phi_i$ is  Lipschitz continuous ($i=1,2$), it follows that
$s_{k}\rightarrow 0$ as $k\rightarrow\infty$. By the
closure property of the subdifferential $\partial L$, we have $0\in \partial L(\tilde{z})$.
\end{proof}

Now, using Lemma \ref{lem22}, we can verify the convergence of the sequence $\{(x_k,y_k)\}_{k\in\mathbb{N}}$ generated
by Algorithm 3.1.

\begin{theorem}
\label{th41}
{\it  Let $2(\alpha_1+\alpha_2)<\rho $ and $\{(x_k,y_k)\}_{k\in \mathbb{N}}$ be generated by Algorithm 3.1. Suppose that  the function $L$ is coercive and $z_k=(x_k,y_k)$ for all $k\in \mathbb{N}$.  Then, the sequence $\{(z_{k+1}, z_k,z_{k-1})\}_{k\in \mathbb{N}}$ satisfies {\rm(H1)}, {\rm(H2)}, and {\rm(H3)} for the function
$$H(u,v,w):\mathbb{R}^{3(l+m)}\rightarrow {(-\infty,+\infty]},$$
which is defined by (3.3). Moreover, if $H(u,v,w)$ has the KL property at a cluster point $(z^\ast,z^\ast,z^\ast)$ of the sequence $\{(z_{k+1}, z_k,z_{k-1})\}_{k\in \mathbb{N}}$. Then the sequence $\{z_k\}_{k\in \mathbb{N}}$ has  finite length, $z_k\rightarrow z^\ast$
as $k \rightarrow\infty$, and $(z^\ast,z^\ast,z^\ast)$ is a critical point of $H$. Hence, $z^\ast$ is a critical point of $L$.}
\end{theorem}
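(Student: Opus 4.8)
The plan is to reduce the entire statement to the abstract convergence principle of Lemma \ref{lem22}, applied with $F=H$ and $p=l+m$, so that $u_k=(z_k,z_{k-1},z_{k-2})\in\mathbb{R}^{3(l+m)}$ and $\triangle_k=\|z_k-z_{k-1}\|_2$. Essentially all analytic content has already been packaged into Lemmas \ref{lem31}--\ref{lem33}; the task is to check that the structural hypotheses (H1), (H2), (H3) of Assumption \ref{Assumption21} hold and then invoke Lemma \ref{lem22}.

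First I would verify (H1): this is exactly inequality (3.4) of Lemma \ref{lem31}(i). Since $2(\alpha_1+\alpha_2)<\rho$, the constant $a=\frac{\rho-2(\alpha_1+\alpha_2)}{2}$ is strictly positive, and (3.4) reads $H(z_{k+1},z_k,z_{k-1})+a\triangle_{k+1}^2\le H(z_k,z_{k-1},z_{k-2})$, which is the sufficient-decrease condition. For (H2): coercivity of $L$ together with Lemma \ref{lem33}(i) gives that $\{z_k\}$ is bounded, so the second half of Lemma \ref{lem32} applies, providing $w_{k+1}\in\partial H(z_{k+1},z_k,z_{k-1})$ obeying estimate (3.15). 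Using $\|x_{k+1}-x_k\|_2\le\triangle_{k+1}$, $\|y_{k+1}-y_k\|_2\le\triangle_{k+1}$, $\|x_k-x_{k-1}\|_2+\|y_k-y_{k-1}\|_2\le\sqrt{2}\,\triangle_k$, the analogous bound for the step-$(k-1)$ terms, and the fact that $L_2(x_{k+1})\le L_2^{+}$ by Assumption \ref{Assumption31}(ii) while $M,L_{\nabla\phi_1},L_{\nabla\phi_2},\alpha_1,\alpha_2$ are fixed constants, every coefficient on the right of (3.15) is bounded above by a common constant; collecting terms yields $\|w_{k+1}\|_2\le b(\triangle_{k+1}+\triangle_k+\triangle_{k-1})$ for a suitable $b>0$, i.e.\ (H2). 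For (H3): boundedness of $\{z_k\}$ again gives $\omega(z_k)\neq\emptyset$; fixing the cluster point $z^\ast$ at which $H$ is assumed to satisfy the KL property and a subsequence $z_{k_j}\to z^\ast$, Lemma \ref{lem31}(ii) yields $\triangle_k\to0$, hence $z_{k_j-1}\to z^\ast$ and $z_{k_j-2}\to z^\ast$, so $u_{k_j}\to\tilde u:=(z^\ast,z^\ast,z^\ast)$, while Lemma \ref{lem33}(ii) gives $H(u_{k_j})\to H(z^\ast,z^\ast,z^\ast)=H(\tilde u)$; this is (H3), with $\tilde u$ the prescribed cluster point.

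With (H1)--(H3) established and $H$ satisfying the KL property at $(z^\ast,z^\ast,z^\ast)$, Lemma \ref{lem22} applies directly and gives $\sum_{k}\|z_{k+1}-z_k\|_2<+\infty$, convergence $z_k\to\bar z=\tilde z$ with $\tilde z=z^\ast$ (consistency follows since $z^\ast$ is the unique cluster point of the now-convergent sequence), and that $(z^\ast,z^\ast,z^\ast)$ is a critical point of $H$. Finally, to pass from a critical point of $H$ to a critical point of $L$: from the form (3.3), at $u=v=w=z^\ast$ the $v$- and $w$-block partial subdifferentials of $\frac{\alpha_1+\alpha_2}{2}\|u-v\|_2^2+\frac{\alpha_2}{2}\|v-w\|_2^2$ vanish and the $u$-block subdifferential of $H$ reduces to $\partial L(z^\ast)$ (by Remark \ref{rem21}(d)), so $0\in\partial H(z^\ast,z^\ast,z^\ast)$ forces $0\in\partial L(z^\ast)$; alternatively this is immediate from $0\in\partial L(\tilde z)$ in Lemma \ref{lem33}(ii) together with $\tilde z=z^\ast$. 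The only step requiring genuine attention is (H2) — one must confirm that each coefficient in (3.15), in particular $L_2(x_{k+1})$, is uniformly bounded in $k$, which is precisely where boundedness of the iterates (hence coercivity of $L$) is used; everything else is bookkeeping against the earlier lemmas.
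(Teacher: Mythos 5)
Your proposal is correct and follows essentially the same route as the paper: verify (H1) from Lemma \ref{lem31}(i), (H2) from Lemma \ref{lem32} combined with the boundedness of $\{z_k\}$ from Lemma \ref{lem33}(i), (H3) from Lemma \ref{lem33}(ii) together with $\triangle_k\to 0$, and then invoke Lemma \ref{lem22}. Your remark that one must check the coefficient $L_2(x_{k+1})$ in (3.15) is uniformly bounded (via Assumption \ref{Assumption31}(ii)) is in fact slightly more careful than the paper, which silently absorbs it into a constant $d$.
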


\begin{proof}
 First, we verify that assumptions (H1), (H2) and (H3) are satisfied. We consider
the sequence  $u_k=(z_k,z_{k-1},z_{k-2})$  for all $k\in \mathbb{N}$ and the proper lower semicontinuous function
$F=H$. Condition (H1) is proved in Lemma \ref{lem31} (i). To prove condition (H2), from Lemma \ref{lem32} we have
$$w_{k+1}\in \partial H(z_{k+1},z_{k},z_{k-1})=\partial H(x_{k+1},y_{k+1},x_{k},y_{k},x_{k-1},y_{k-1}),$$
where $w_{k+1}$ is defined by (3.13) for $k\in \mathbb{N}$. Lemma \ref{lem33} (i) implies that the sequence $\{z_k\}_{k\in\mathbb{N}}$ is bounded.
Let $c=\max\{L_{\nabla\phi_1},L_{\nabla\phi_2}\}, d=M+L_2(x_{k+1} )$. It follows from
$$(\|x_{k+1}-x_k\|_2+\|y_{k+1}-y_k\|_2)^2\leq 2(\|x_{k+1}-x_k\|_2^2+\|y_{k+1}-y_k\|_2^2)=2\|z_{k+1}-z_k\|_2^2$$
and (3.15) that $$
\aligned &\|w_{k+1}\|_2\\
\leq&({c+d+2\alpha_1+2\alpha_2})\sqrt{2}\|z_{k+1}-z_k\|_2+(\alpha_1+2\alpha_2)\sqrt{2}\|z_{k}-z_{k-1}\|_2
+\alpha_2\sqrt{2}\|z_{k-1}-z_{k-2}\|_2,
\endaligned
$$
which implies that condition (H2) holds for $b=\sqrt{2}(c+d+\alpha_1+2\alpha_2)$. From Lemma \ref{lem33} (i), we have $\omega(z_k)\neq\emptyset$, and so
there exists a subsequence $\{z_{k_j}\}_{j\in\mathbb{N}}\subseteq \{z_k\}_{k\in\mathbb{N}}$ such that $z_{k_j}\rightarrow \tilde{z}$ as $j\rightarrow\infty$. By Lemma \ref{lem31} (ii), we know
$\triangle_{k_j}=\|z_{k_j}-z_{k_j-1}\|_2\rightarrow 0$ as $j\rightarrow\infty$. Taking
$u_{k_j}=(z_{k_j}, z_{k_j-1},z_{k_j-2})$ and $\tilde{u}=(\tilde{z},\tilde{z},\tilde{z})$, we have $u_{k_j}\rightarrow \tilde{u}$ as $j\rightarrow\infty$. Lemma \ref{lem33} (ii) implies that
$$\lim_{j\rightarrow\infty}F(u_{k_j})=\lim_{j\rightarrow\infty}H(z_{k_j},z_{k_j-1},z_{k_j-2})
=H(\tilde{z},\tilde{z},\tilde{z})=F(\tilde{u})$$
and $0\in\partial L(\tilde{z})$. So we get that (H3) holds. Now, Lemma \ref{lem22} concludes the proof.
\end{proof}

\section{Numerical experiments}
\hspace*{\parindent}In all of the following experiments, the codes are written in MATLAB and are performed on a personal Dell computer with Pentium(R) Dual-Core CPU @ 2.4GHz and RAM 2.00GB. 
\subsection{Sparse nonnegative matrix factorization}
\ \par Given a matrix $A$, the target of nonnegative matrix factorization (NMF) \cite{DH,JN,FF} is to decompose database $A$ into a number of sparse basis, such that each database can be approximated by using a small number of those parts, i.e., seeking a factorization $A \approx XY$, where $X \in \mathbb{R} ^{ n\times r}$, $Y \in \mathbb{R} ^{r\times d}$ are nonnegative with $r \leq d$ and $X$ sparse. In order to enforce sparsity on the basis, we additionally consider $l_0$ sparsity constraint \cite{RF} in this problem. Thus sparse-NMF can be formulated as the following model
\begin{equation}
\label{(5.1)}
\aligned
\underset{X,Y}{\min}  \left \{ \left \| A-XY \right \| _{F}^{2} : \ X,Y\ge 0,\ \left \| X_i \right \| _0\le s,\ i=1,2,\dots ,r\right \} .
\endaligned
\end{equation}
Here, $X_i$ denotes the ith column of $X$. In dictionary learning and sparse coding, $X$ is called the learned dictionary with coefficients $Y$. In this formulation, the sparsity on $X$ is strictly enforced using the nonconvex $l_ 0$ constraint, restricting $75\%$ of the entries to be 0.
This model (4.1) can be converted to (1.1), where $Q(X,Y)=\frac{\lambda}{2}\left \| A-XY \right \| _{F}^{2}$, $f(X)=\sum_{i=1}^r \left \| X_i \right \| _0+\iota_{X\ge0}(X)$, $g(Y)=\iota_{Y\ge0}(Y)$, $\iota_C$ is the indicator function on $C$. 

\par We use the extended Yale-B dataset and the ORL dataset, which are standard facial recognition benchmarks consisting of human face images\footnote{ http://www.cad.zju.edu.cn/home/dengcai/Data/FaceData.html.}. The extended Yale-B dataset contains 2414 cropped images of size $32 \times 32$, while the ORL dataset contains 400 images sized $64 \times 64$, see Figure 1. For solving this sparse NMF problem (4.1), \cite{GCH,XX} gave the details on how to solve the X-subproblems and Y-subproblems. In the experiment for the Yale dataset, we extract 49 sparse basis images for the dataset. For the ORL dataset we extract 25 sparse basis images. We take parameter $\lambda=0.5$. In Algorithm 3.1, let $\phi_1(X)=\frac{\mu_1 }{2} \left \| X\right \|_{2}^{2}$, $\phi_2(Y)=\frac{\mu_2 }{2} \left \|Y \right \|_{2}^{2}$. In numerical experiment, we computed $\mu_1$ and $\mu_2$ by computing the largest eigenvalues of $\lambda YY^T$ and $\lambda X^TX$ at $k$-th iteration, respectively. We use the algorithm PALM, iPALM, GiPALM and Algorithm 3.1 for a specific sparsity setting $s=0.25$. We choose $\alpha_{1k}=\beta _{1k}=0.2$, $\alpha_{2k}=\beta _{2k}=0.3$ in Algorithm 3.1 and $\alpha_{1k}=\beta _{1k}=0.5$ in iPALM and GiPALM. The related numerical results of objective values are reported in Figure 2 and Figure 4 under Yale-B dataset and ORL dataset, respectively. And we also take extrapolation parameter $\alpha_{1k}=\alpha_{2k}=\beta _{1k}=\beta _{2k}=\frac{k-1}{k+2}$ in Algorithm 3.1 and $\alpha_{1k}=\beta _{1k}=\frac{k-1}{k+2}$ in iPALM and GiPALM. In Figure 3 and Figure 5, we report the numerical results under Yale-B dataset and ORL dataset, respectively. One can observe from these four figures that the proposed Algorithm 3.1 can get slightly lower values than the other three algorithms within almost the same computation time.
\par We also compare Algorithm 3.1 with PALM, iPALM and GiPALM for different sparsity settings (the value of $s$). We take extrapolation parameter $\alpha_{1k}=\alpha_{2k}=\beta _{1k}=\beta _{2k}=\frac{k-1}{k+2}$ in Algorithm 3.1 and $\alpha_{1k}=\beta _{1k}=\frac{k-1}{k+2}$ in iPALM and GiPALM. The results of the basis images are shown in Figure 6. One can observe from Figure 6 that for smaller values of $s$, the four algorithms lead to more compact representations. This might improve the generalization capabilities of the representation.
\begin{figure*}[!t]
    \centering
    \subfloat{\includegraphics[width=6.0in]{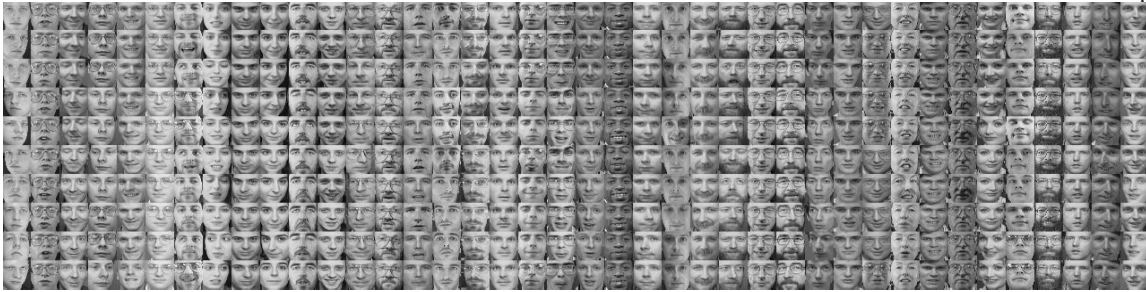}
    \label{400 normalized cropped frontal faces}}
    \caption{ORL face database which includes 400 normalized cropped frontal faces which we used in our S-NMF example.}
    \label{fig_sim}
\end{figure*}

\begin{figure*}[!t]
    \centering
    \subfloat[Epoch counts comparison]{\includegraphics[width=3.0in]{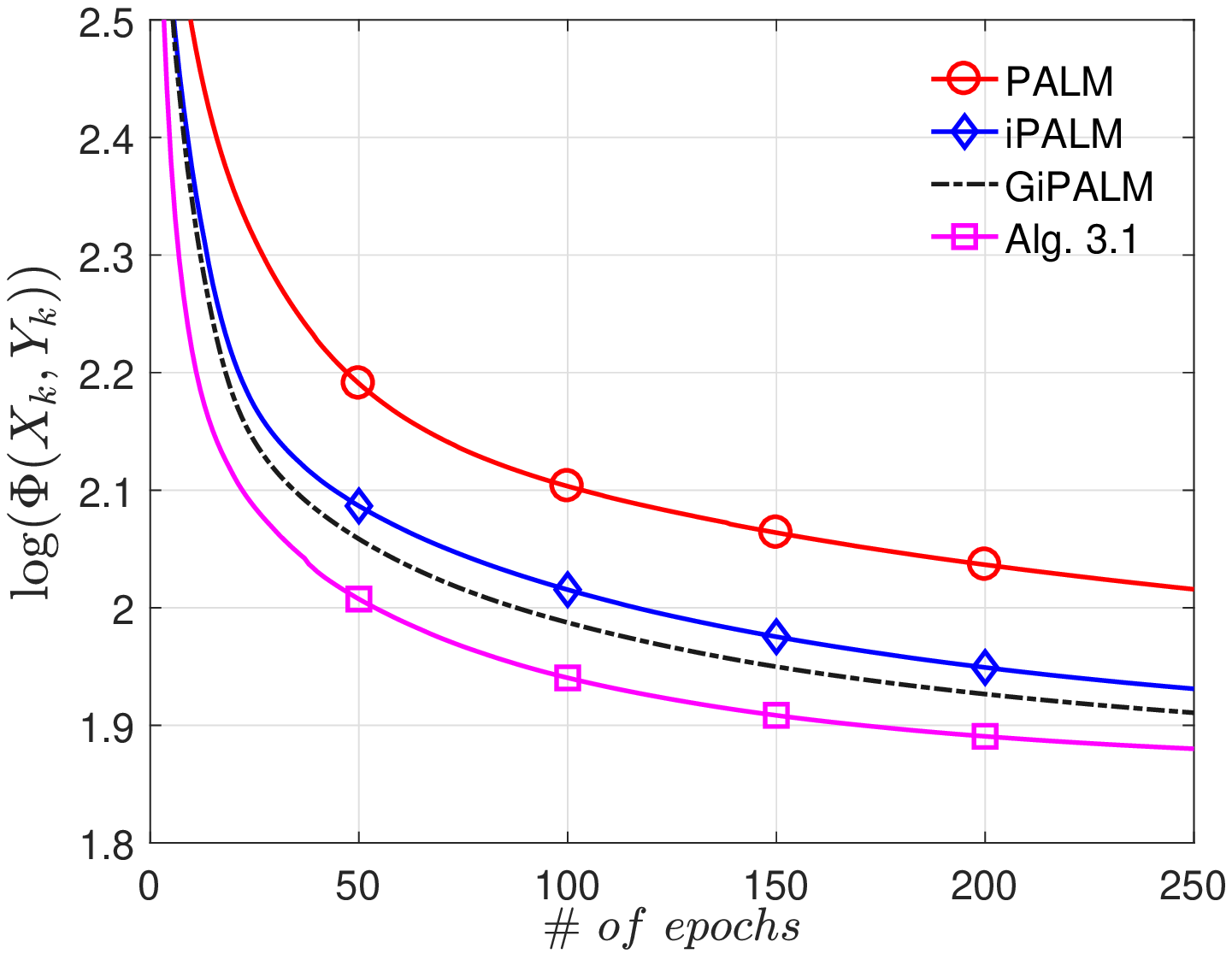}
    \label{Epoch counts comparison}}
\hfil
\subfloat[Wall-clock time comparison]{\includegraphics[width=3.0in]{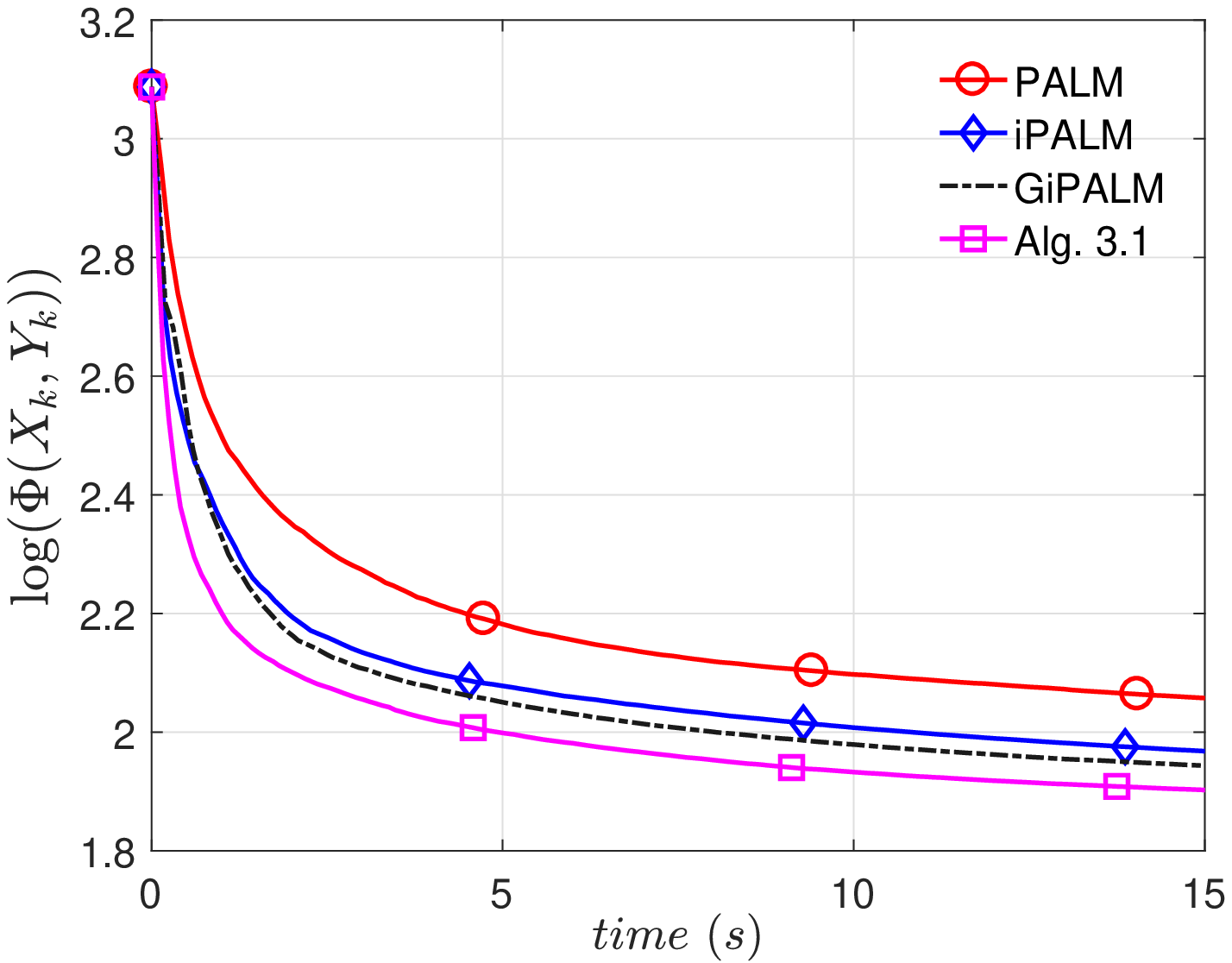}%
    \label{Wall-clock time comparison}}

    \caption{Objective decrease comparison of sparse-NMF with $s = 25\%$ on Yale dataset.}
    \label{fig_sim}
\end{figure*}

\begin{figure*}[!t]
    \centering
    \subfloat[Epoch counts comparison]{\includegraphics[width=3.0in]{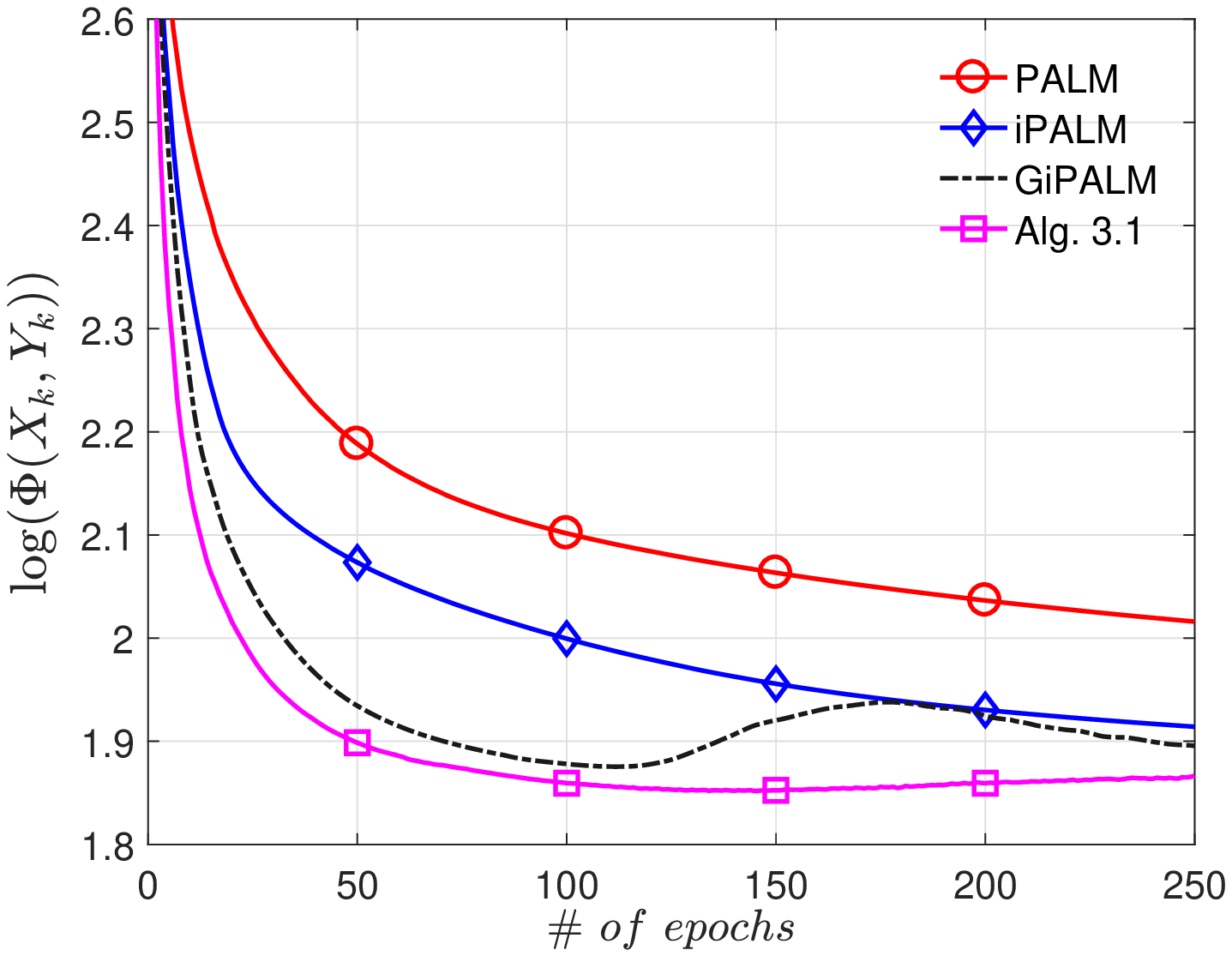}
    \label{Epoch counts comparison}}
\hfil
\subfloat[Wall-clock time comparison]{\includegraphics[width=3.0in]{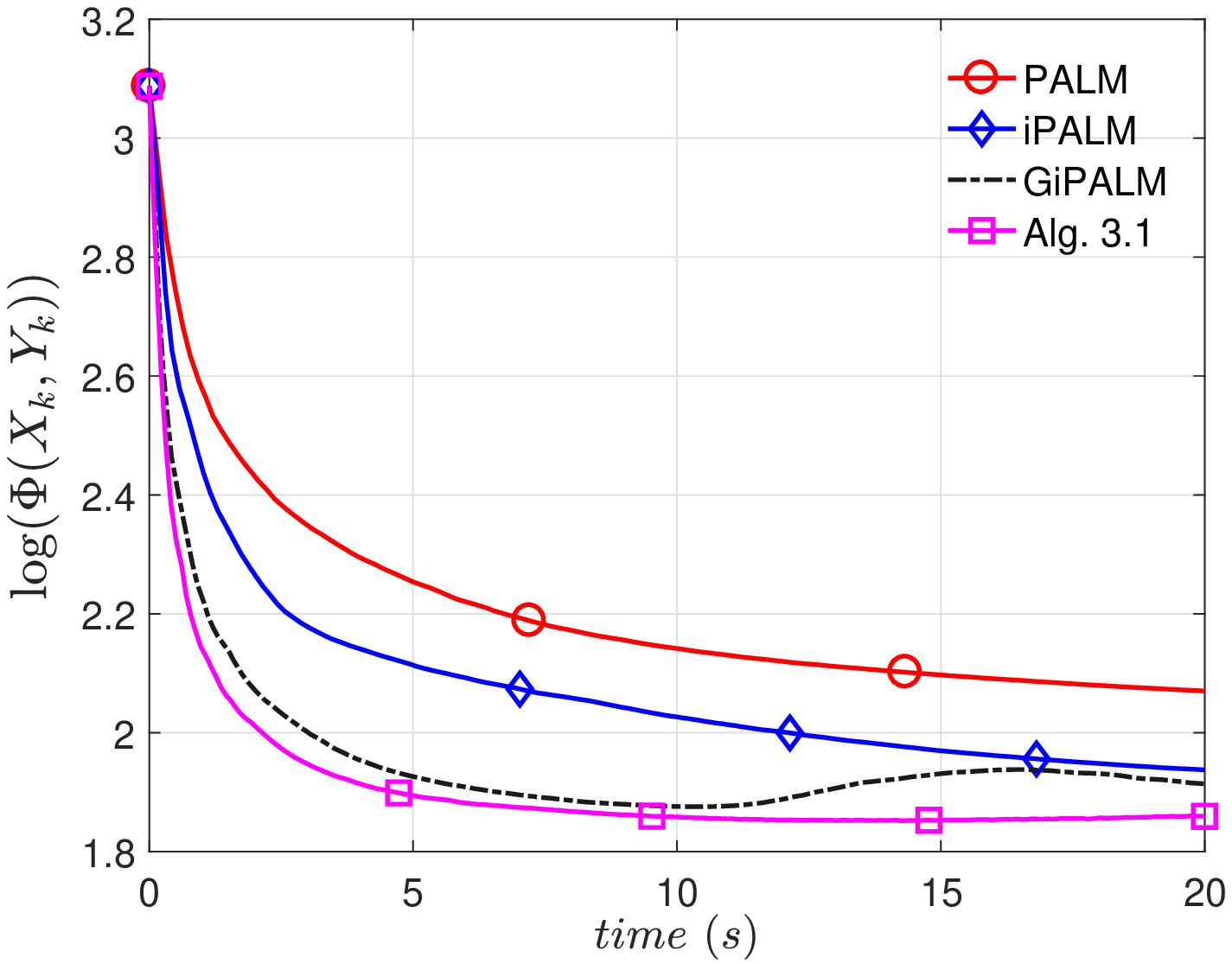}%
    \label{Wall-clock time comparison}}

    \caption{Objective decrease comparison of sparse-NMF with $s = 25\%$ on Yale dataset.}
    \label{fig_sim}
\end{figure*}

\begin{figure*}[!t]
    \centering
    \subfloat[Epoch counts comparison]{\includegraphics[width=3.0in]{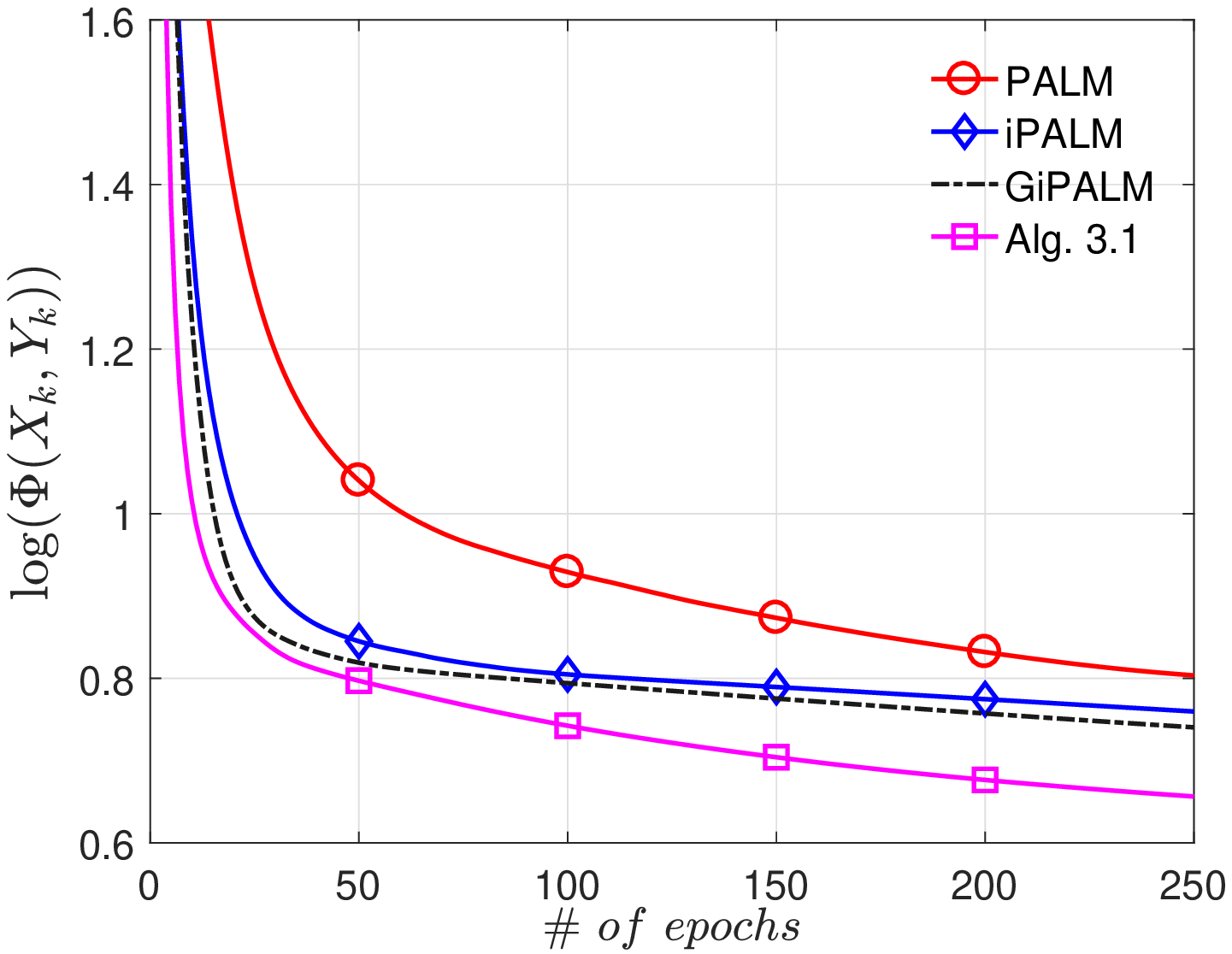}
    \label{Epoch counts comparison}}
\hfil
\subfloat[Wall-clock time comparison]{\includegraphics[width=3.0in]{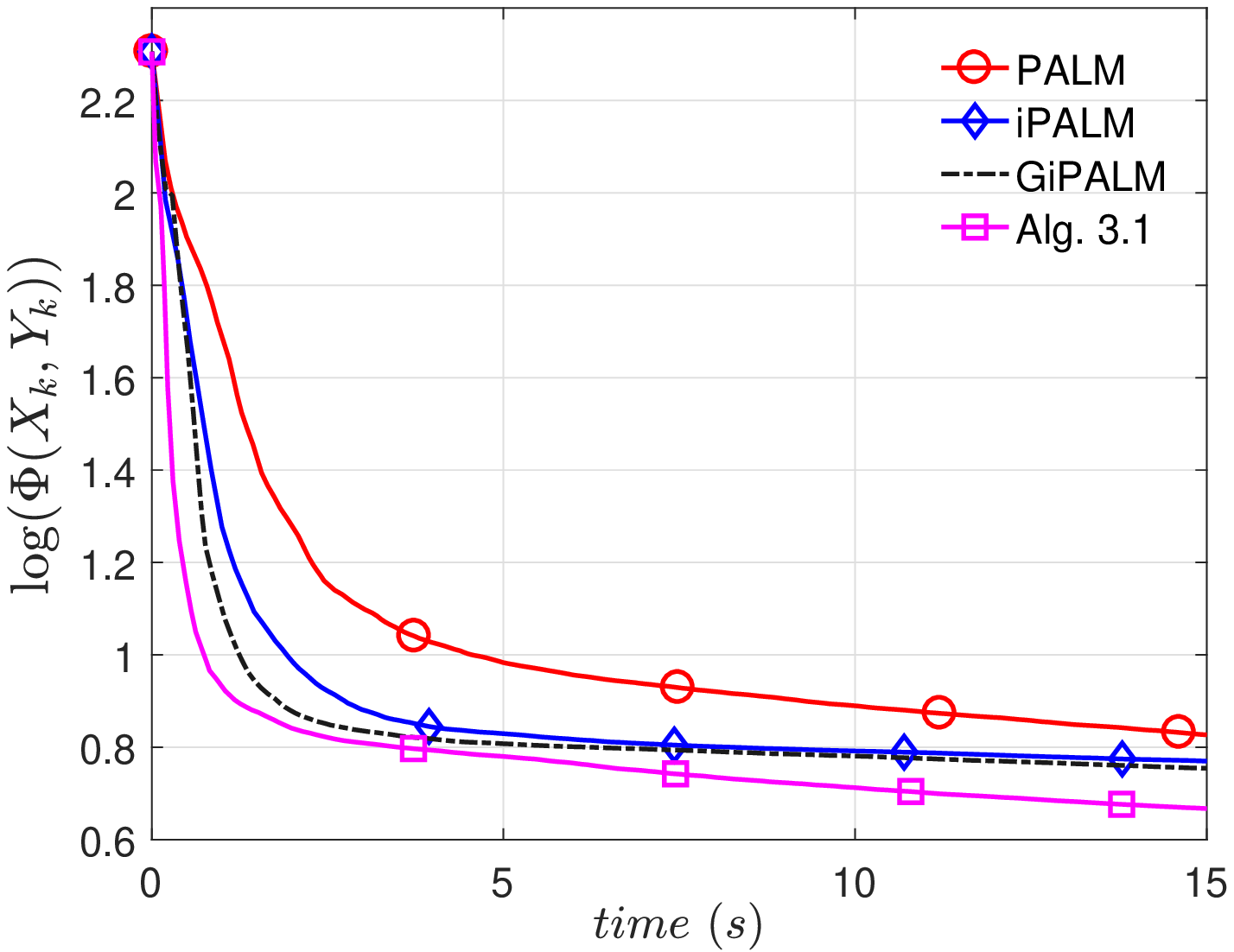}%
    \label{Wall-clock time comparison}}

    \caption{Objective decrease comparison of sparse-NMF with $s = 25\%$ on ORL dataset.}
    \label{fig_sim}
\end{figure*}

\begin{figure*}[!t]
    \centering
    \subfloat[Epoch counts comparison]{\includegraphics[width=3.0in]{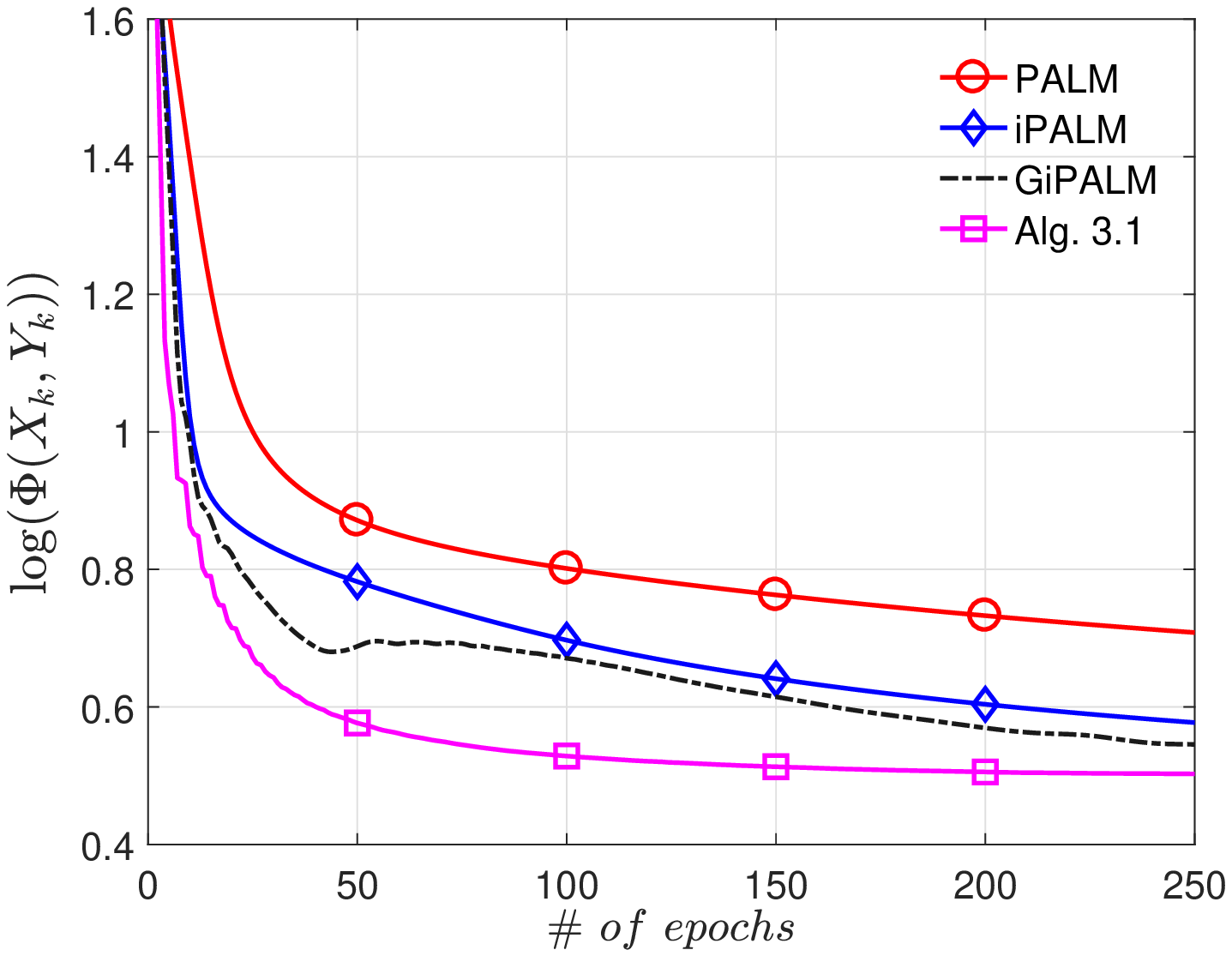}
    \label{Epoch counts comparison}}
\hfil
\subfloat[Wall-clock time comparison]{\includegraphics[width=3.0in]{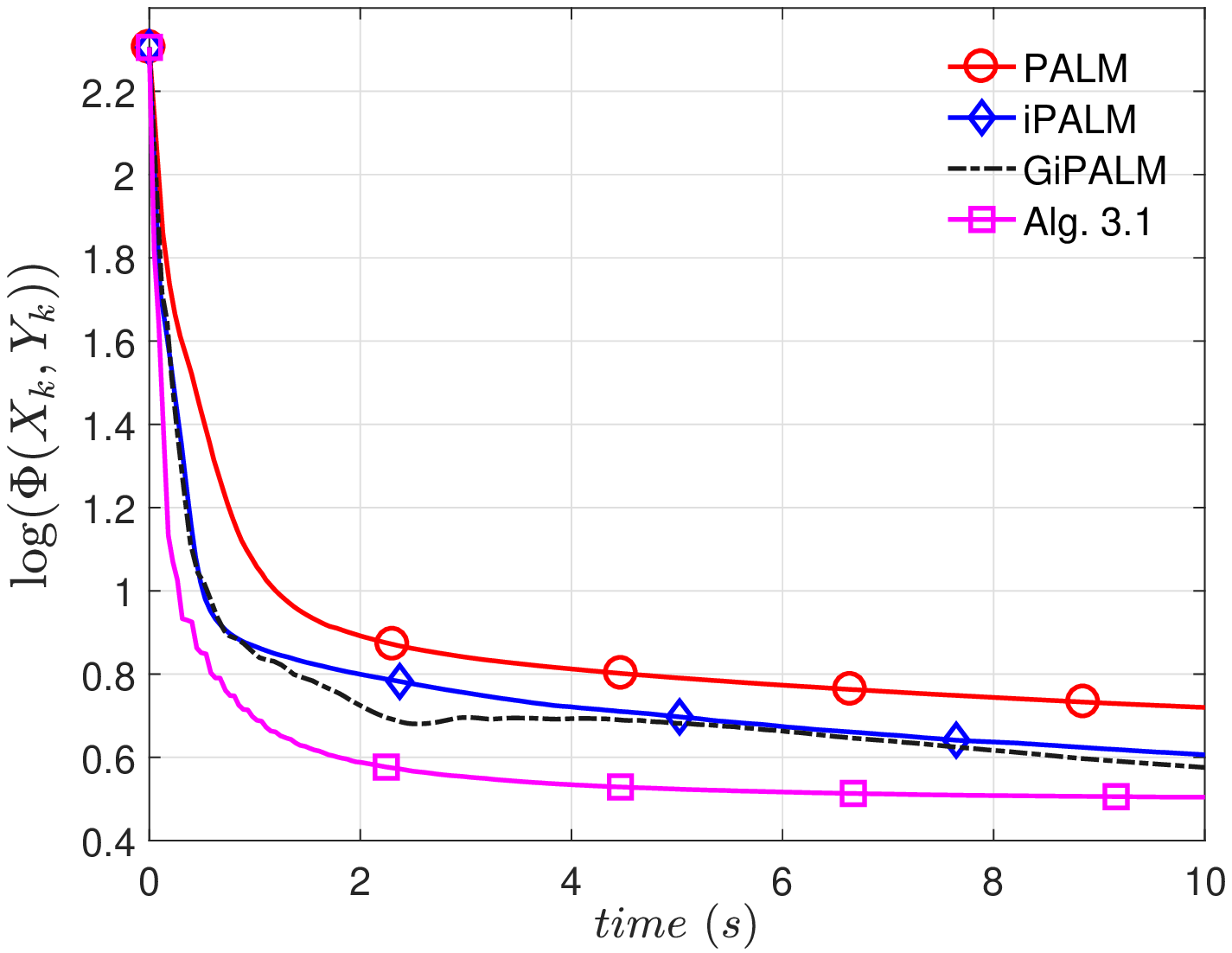}%
    \label{Wall-clock time comparison}}

    \caption{Objective decrease comparison of sparse-NMF with $s = 25\%$ on ORL dataset.}
    \label{fig_sim}
\end{figure*}

\begin{figure*}[!t]
    \centering
     \subfloat{\includegraphics[width=1.5in]{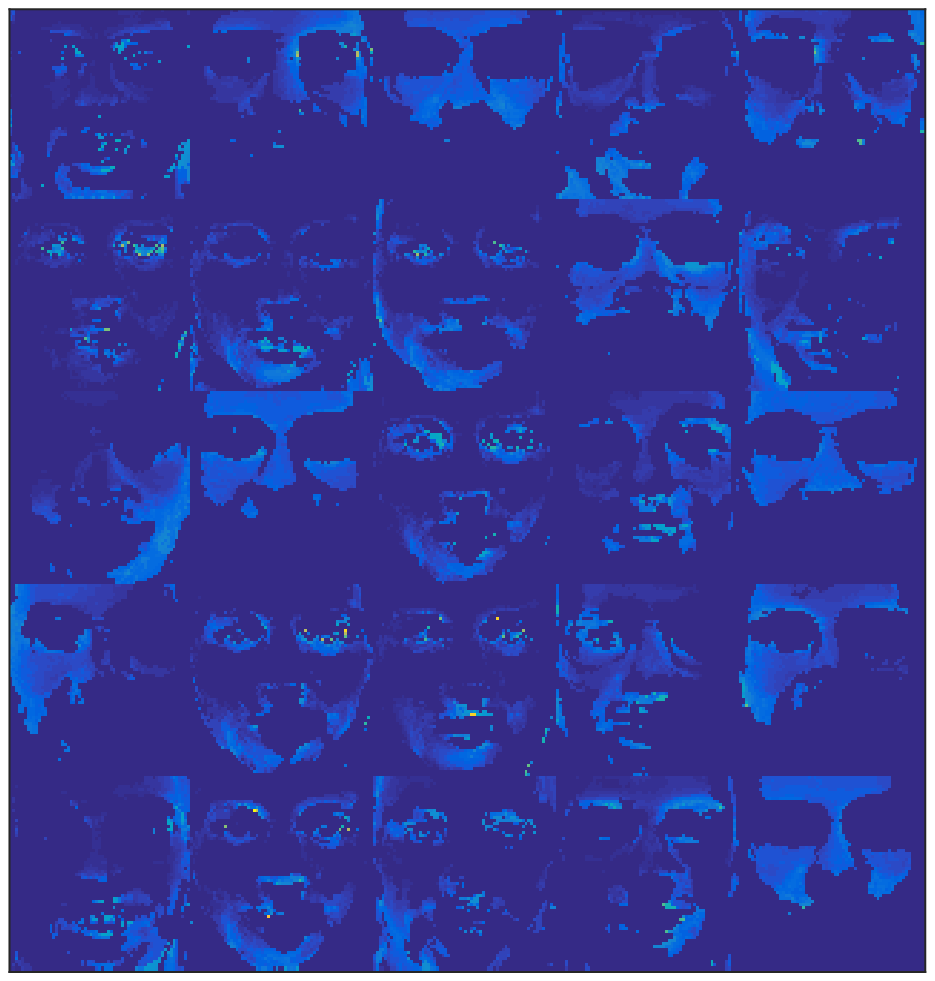}
    \label{fig_first_case}}
\hfil
\subfloat{\includegraphics[width=1.5in]{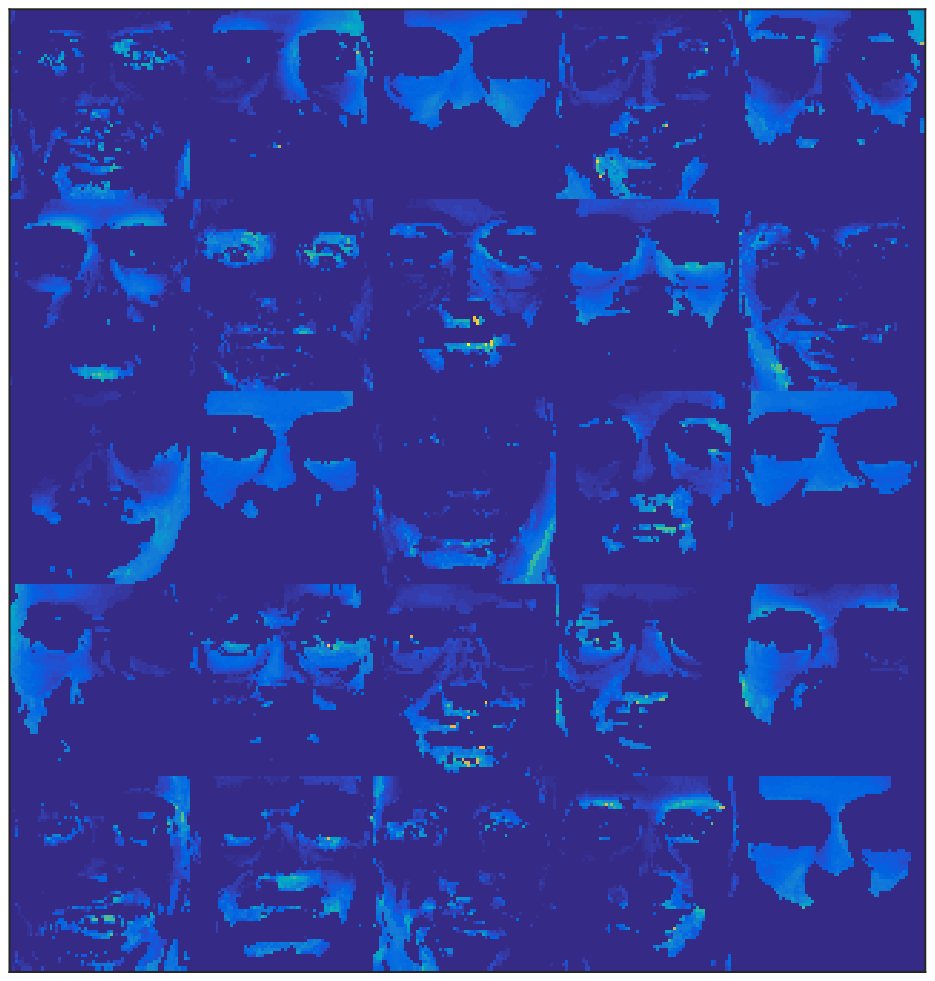}%
    \label{fig_second_case}}
\hfil
\subfloat{\includegraphics[width=1.5in]{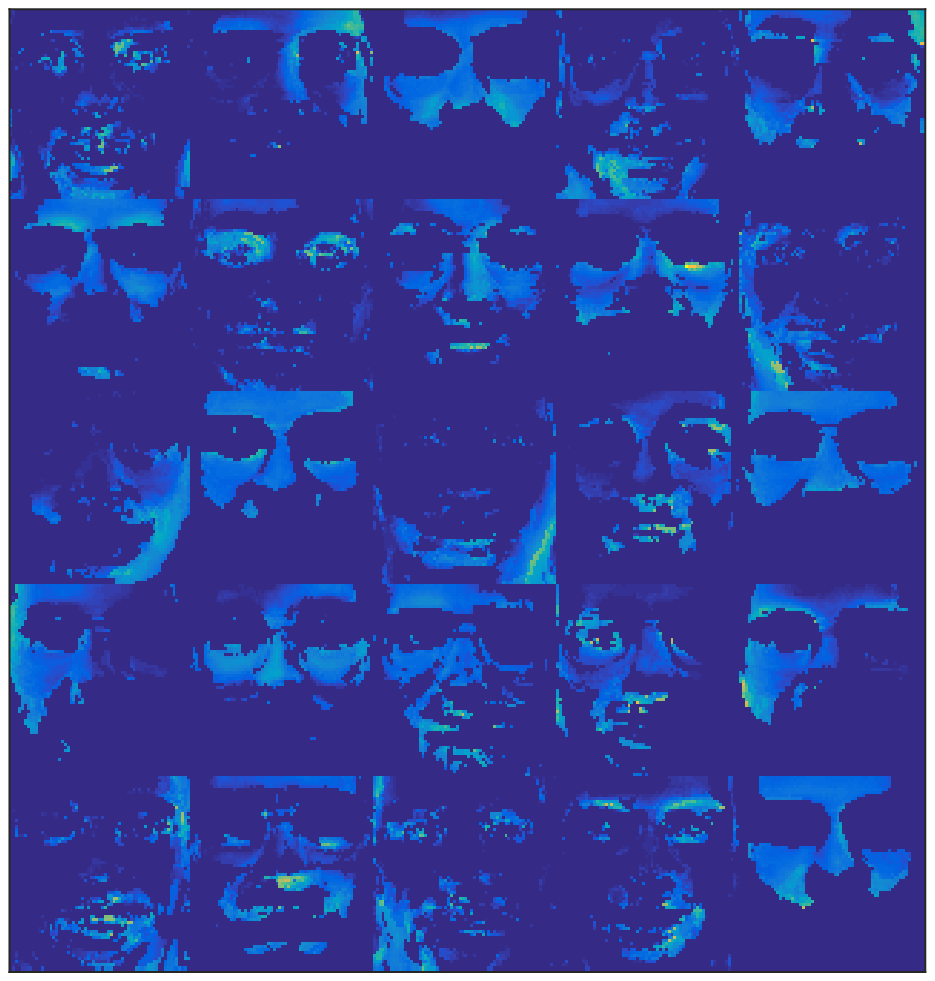}%
    \label{fig_thrid_case}}
\hfil    
    \subfloat{\includegraphics[width=1.5in]{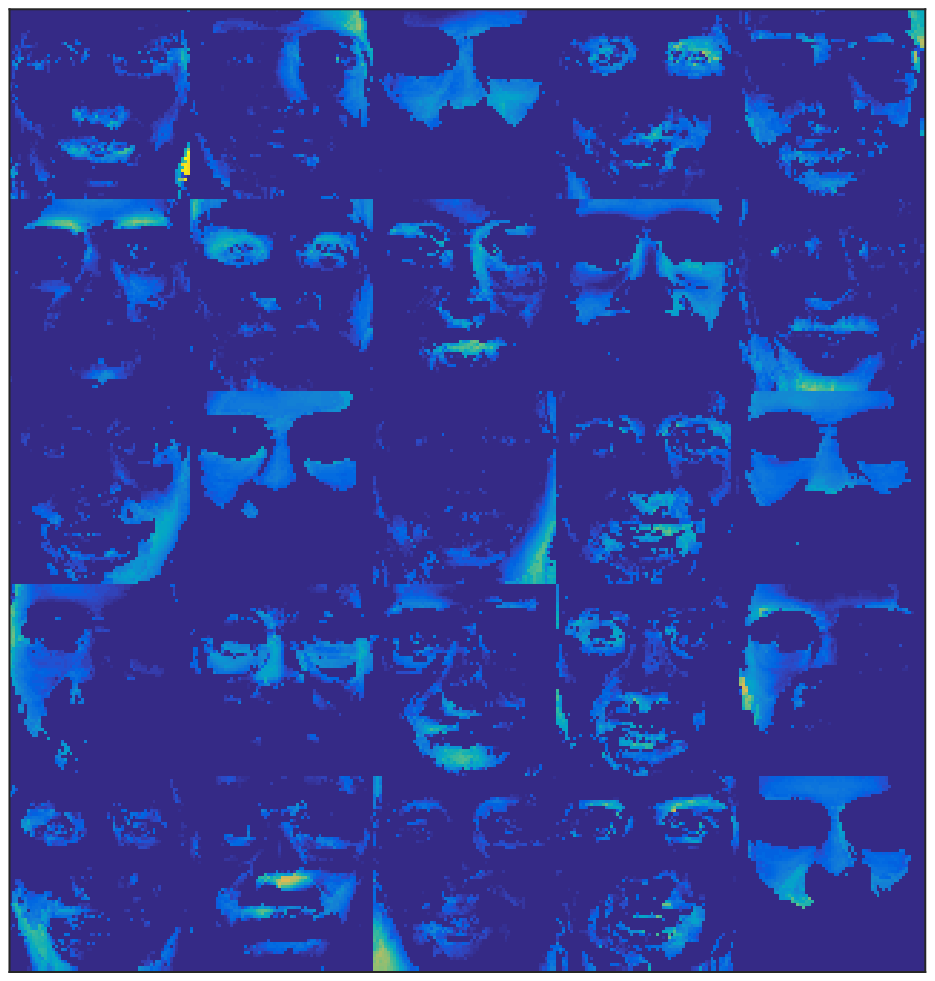}%
    \label{fig_four_case}}
\hfil
\subfloat{\includegraphics[width=1.5in]{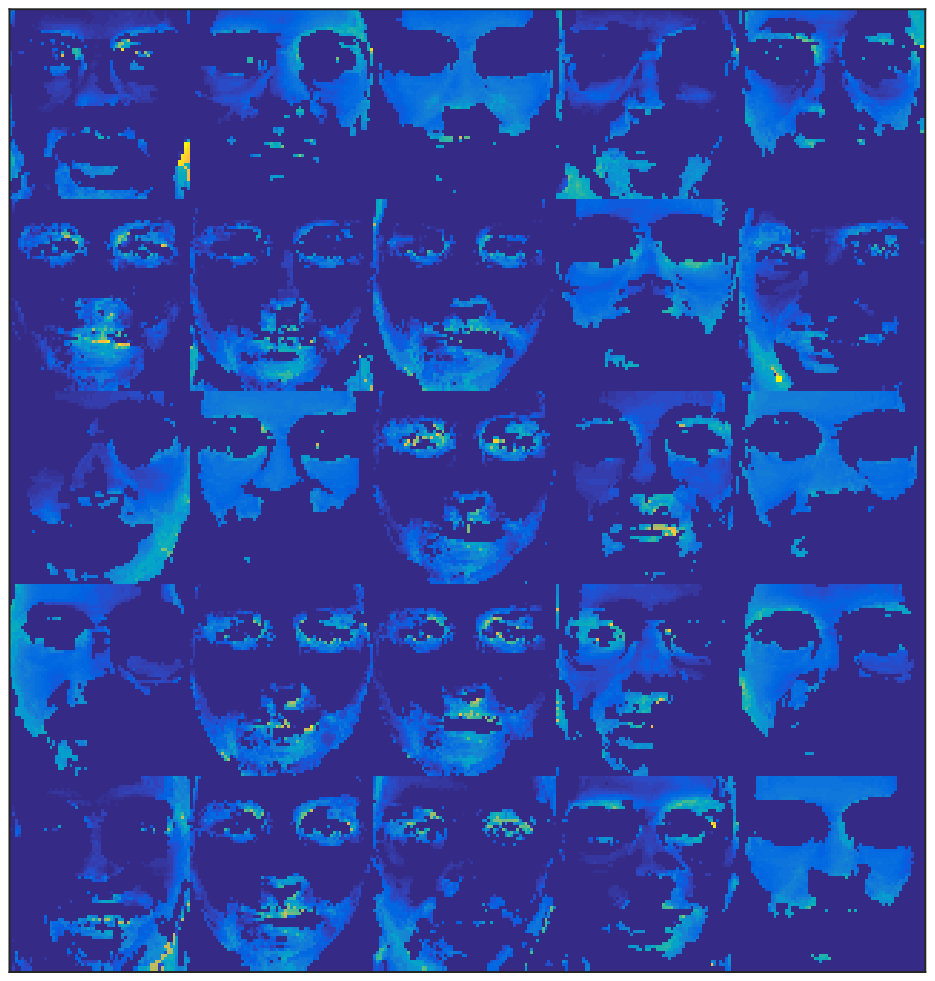}%
    \label{fig_five_case}}
\hfil
\subfloat{\includegraphics[width=1.5in]{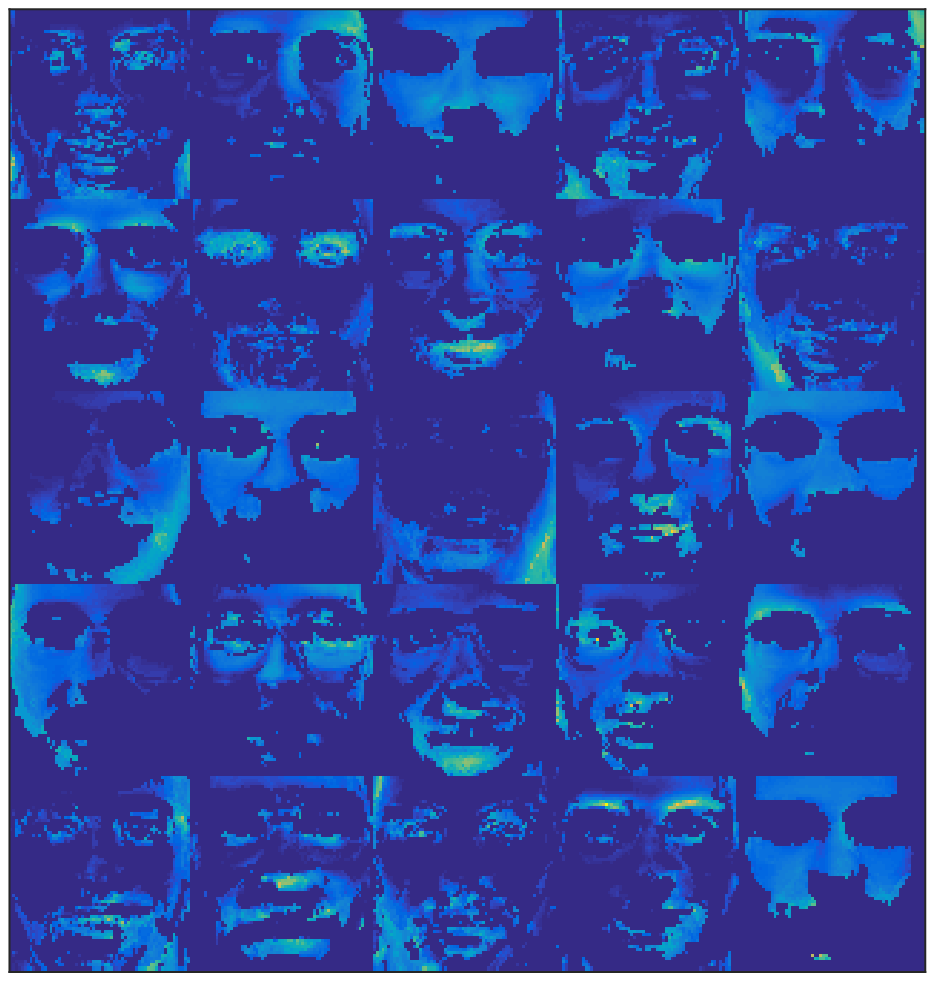}%
    \label{fig_six_case}}
    \hfil
\subfloat{\includegraphics[width=1.5in]{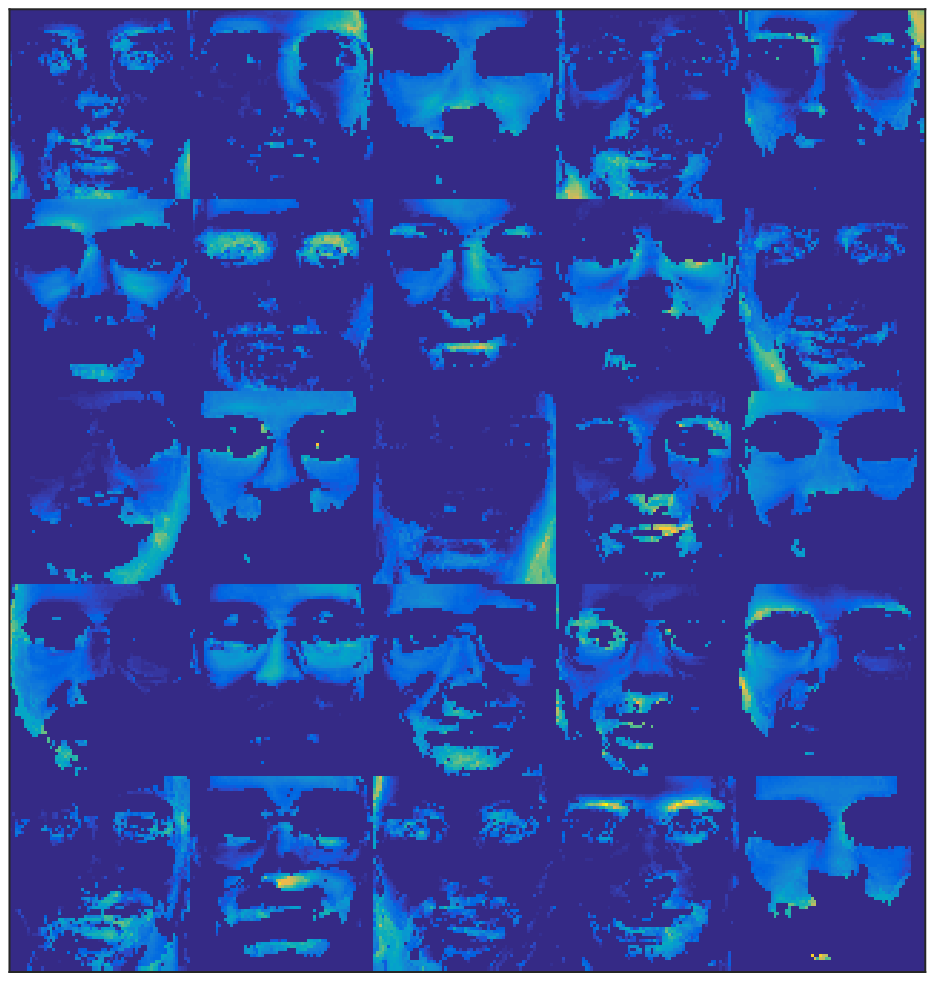}%
    \label{fig_seven_case}}
    \hfil
\subfloat{\includegraphics[width=1.5in]{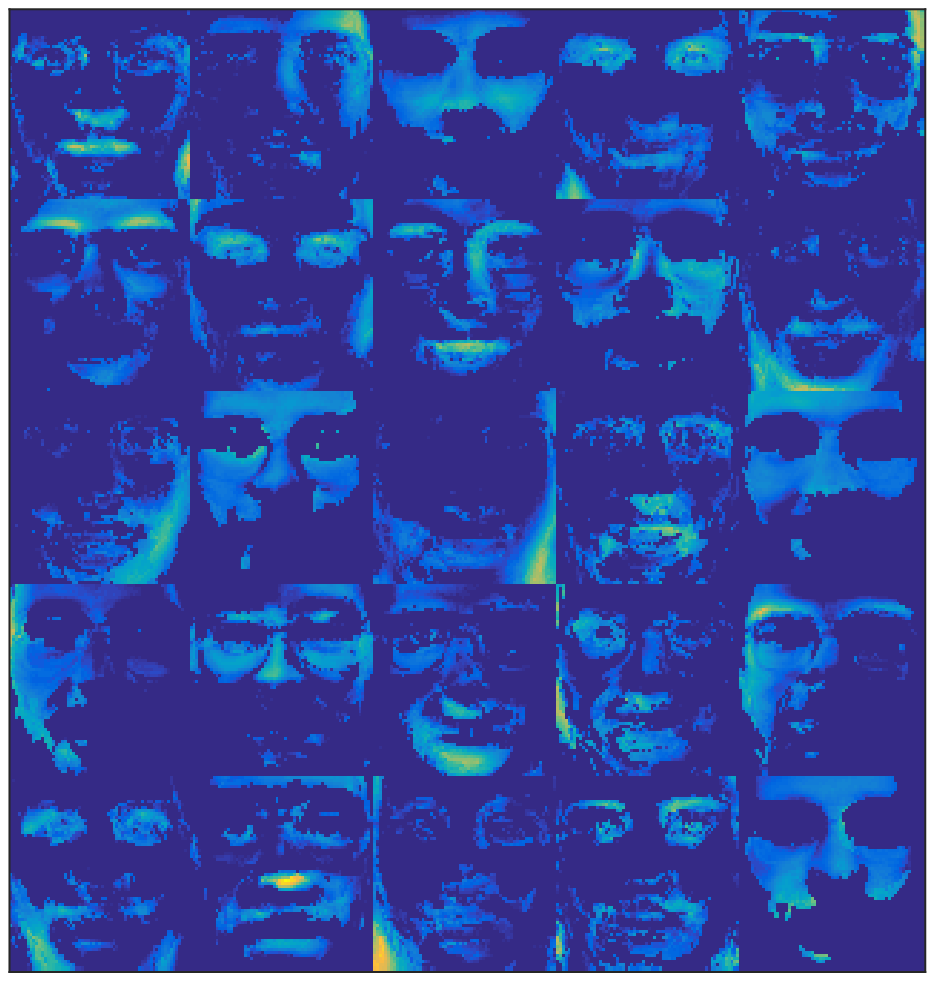}%
    \label{fig_eight_case}}   
    \hfil
\subfloat{\includegraphics[width=1.5in]{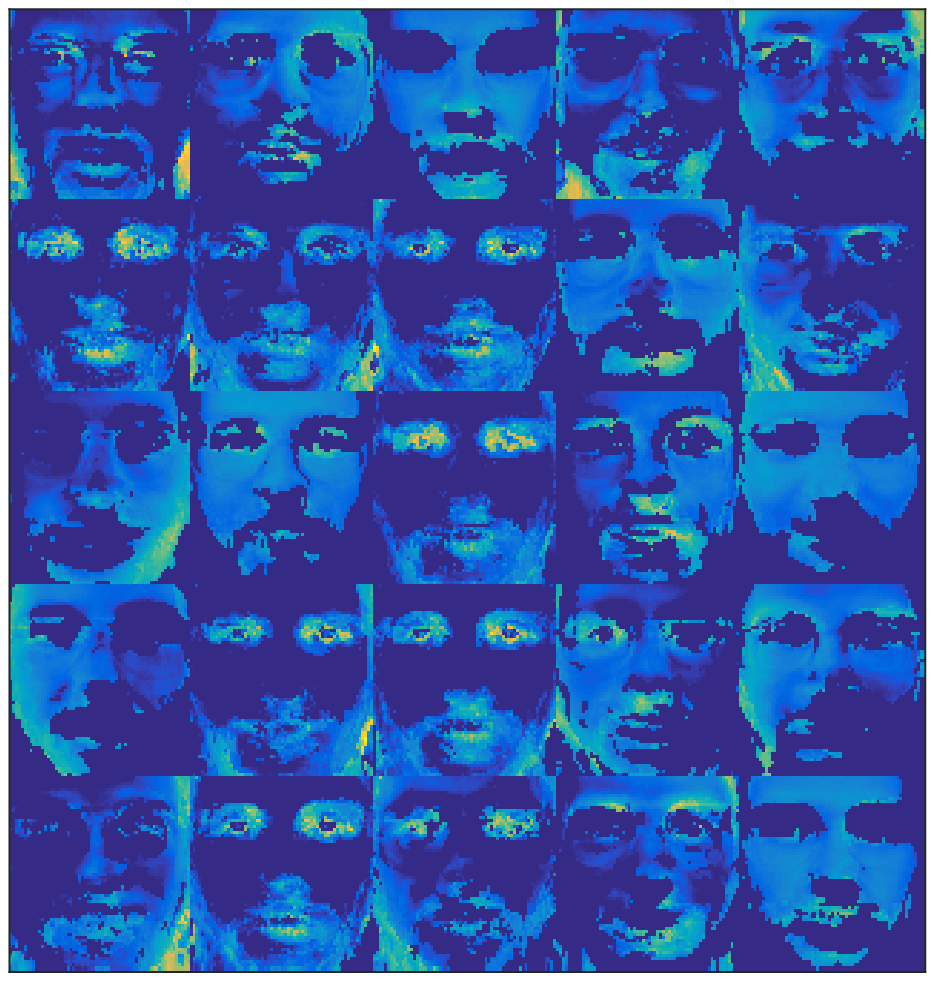}%
    \label{fig_nine_case}}
\hfil
\subfloat{\includegraphics[width=1.5in]{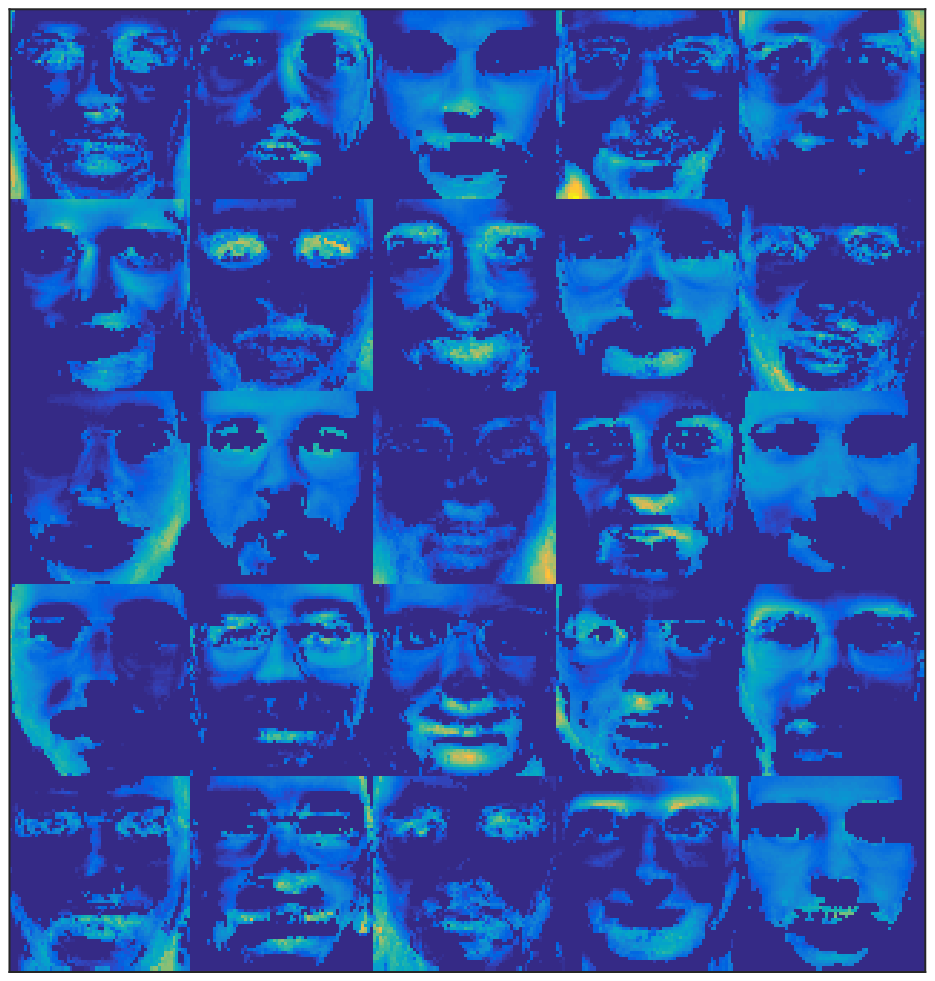}%
    \label{fig_ten_case}}
    \hfil
\subfloat{\includegraphics[width=1.5in]{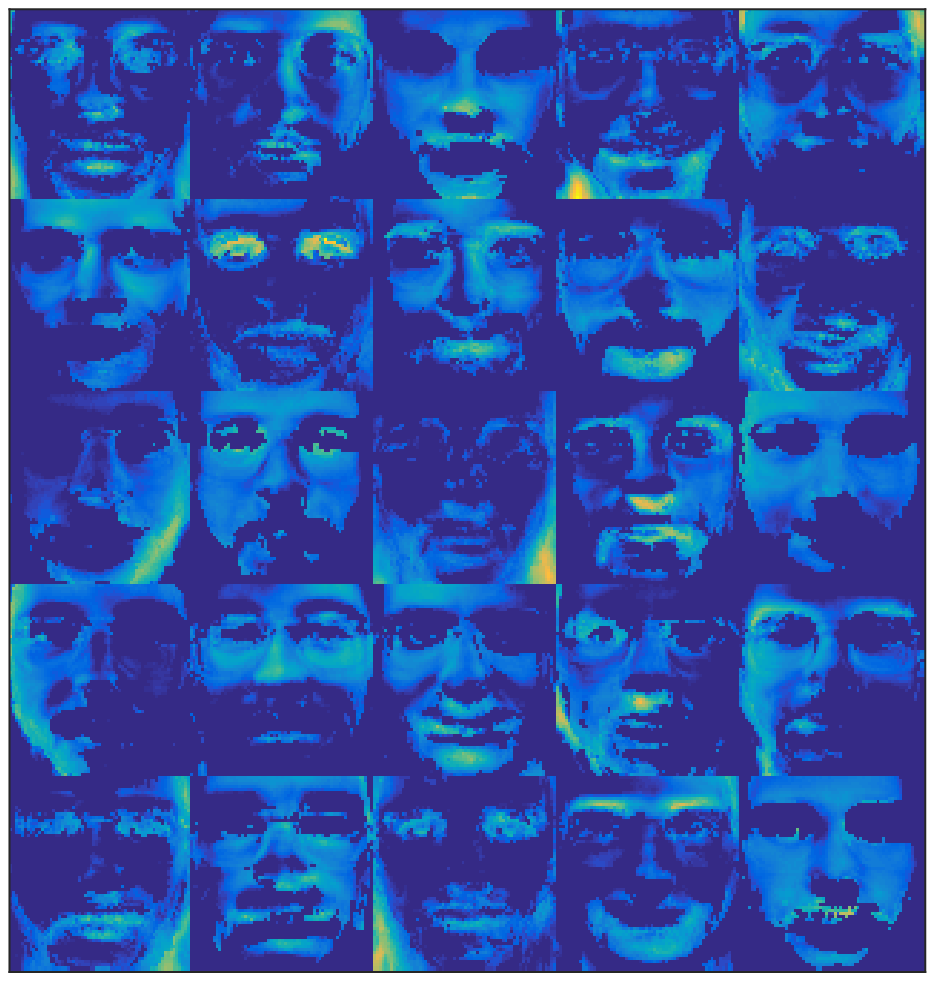}%
    \label{fig_eleven_case}}
    \hfil
\subfloat{\includegraphics[width=1.5in]{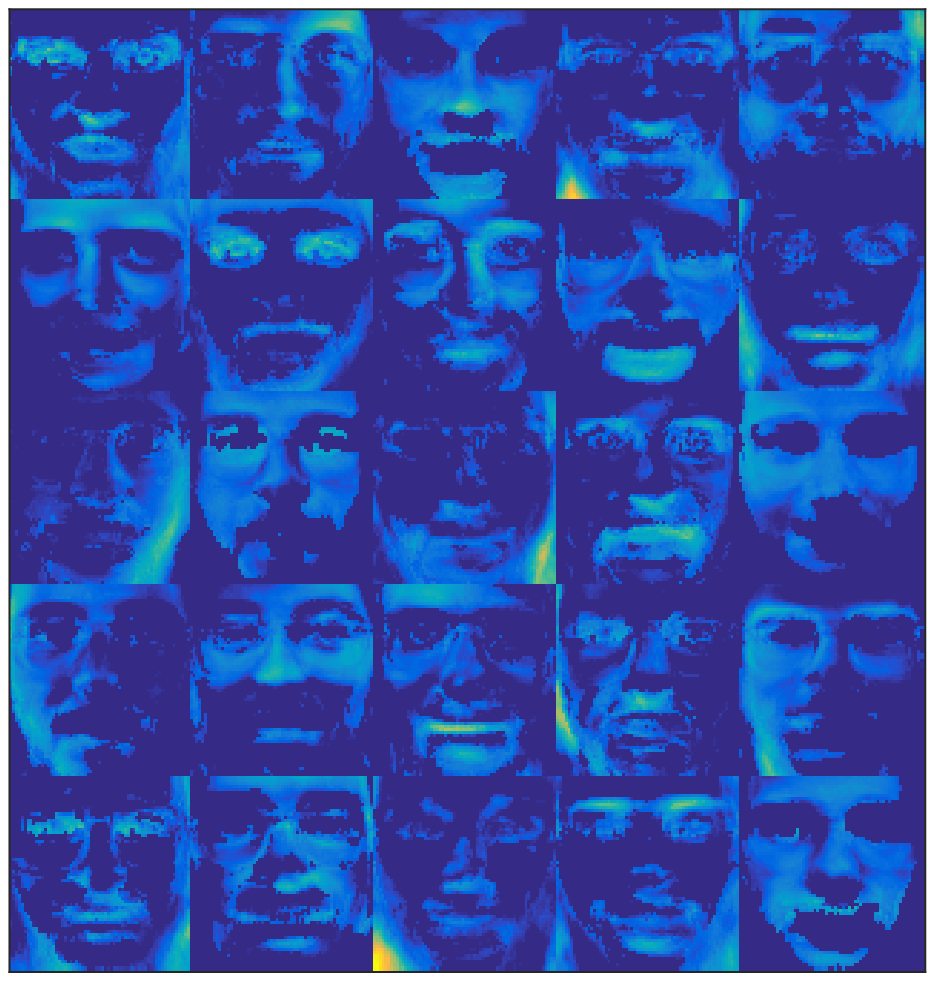}%
    \label{fig_twelve_case}} 
    \caption{The results for 25 basis faces using different sparsity settings. From left column to right column are the results of PALM, iPALM, GiPALM and Algorithm 3.1, respectively. From top row to bottom row are the result of $s = 25\%$  (sparsity of $s = 25\%$ means that each basis face contains $75\%$ nonzero pixels), $s = 33\%$ and $s = 50\%$, respectively. Clearly, stronger sparsity leads to a more compact representation.}
    \label{fig_sim2}
\end{figure*}
\subsection{Signal recovery}
\hspace*{\parindent}The sparse signal recovery problem has been studied extensively. Supposed $x$ is an unknown vector in $\mathbb{R}^m$ (a signal), given a matrix $A\in \mathbb{R }^{n\times m } $, and an observation $b\in \mathbb{R }^{n} $, we plan to recover $x$ from observation $b$ such that $x$ is of the sparsest structure (that is, $x$ has the fewest nonzero components). In this section, the sparse signal recovery problem can be modulated by the following $L_{0}$-problem\\
$$\min_{x}\\ {\rm }\ \ \left \| x \right \| _{0} $$
$$s.t.\\ {\rm }\ \ Ax=b.$$\\
The $L_{0}$-problem in the form of $L_{1/2} $ regularization can be described as \\
$$\min_{x}\\ {\rm }\ \  \frac{1}{2} \left \| Ax-b \right \| _{2}^{2} +\eta \left \| x \right \| _{1/2}^{1/2},\eqno{(4.2)} $$
where $\eta >0$ is used to balance regularization and data fitting. $\left \| x \right \| _{1/2} $ is the $L_{1/2}$ quasi-norm of $\mathbb{R }^{m}$, defined by $\left \| x \right \| _{1/2} =( {\textstyle \sum_{i=1}^{m}}\left | x_{i}  \right | ^{1/2}  )^{2} $. Now we illustrate how to implement the Algorithm 3.1 for solving the above model.\\
\hspace*{\parindent}By introducing an auxiliary variable $y\in \mathbb{R }^{m}$. The model (4.2) can be reformulated as \\
$$\min_{x,y}\\ {\rm }\ \ \frac{1}{2} \left \| Ax-b \right \| _{2}^{2} +\eta \left \| y \right \| _{1/2}^{1/2} ,$$
$$s.t.\\ {\rm }\ \ x=y,$$
which indeed can be described as\\
$$\min_{x,y}\\ {\rm }\ \  \frac{1}{2} \left \| Ax-b \right \| _{2}^{2} +\eta \left \| y \right \| _{1/2}^{1/2}+\frac{\gamma}{2}\left \| x-y \right \| _{2}^{2}.\eqno{(4.3)}$$
When we set $f(x)=\frac{1}{2} \left \| Ax-b \right \| _{2}^{2}, Q(x,y)=\frac{\gamma}{2}\left \| x-y \right \| _{2}^{2}, g(y)=\eta \left \| y \right \| _{1/2}^{1/2}$ , then (4.3) becomes nonconvex minimization problem (1.1). Let $\phi_1(x)= \langle x,Mx\rangle, \phi_2(y)=\frac{\lambda }{2} \left \| y\right \|_{2}^{2}$, then we can obtain colsed form of solution for solving the $x$-subproblem and $y$-subproblem of Algorithm 3.1 respectively as follows.\\
\hspace*{\parindent}Setting $M=\mu I- A^{T}A$, the $x$-subproblem corresponds to the following optimization problem
$$
\aligned
x_{k+1}
&&\in\arg\min_{ x\in \mathbb{R}^l}&\left \{\frac{1}{2} \left \| Ax-b \right \| _{2}^{2}+\langle x,\gamma(x_k-y_k)\rangle+\frac{1}{2}\left \| x-x_{k}  \right \| _{M}^{2} +\alpha_{1k} \langle x,x_{k-1}-x_k\rangle\right.\\
&&&\left.+\alpha_{2k} \langle x,x_{k-2}-x_{k-1}\rangle\right \}\\
&&=\arg\min_{ x\in \mathbb{R}^l}&\left \{\frac{1 }{2} \left \| Ax \right \| _{2}^{2}-\left \langle  Ax,b \right \rangle +\langle x,\gamma(x_k-y_k)\rangle+\frac{1}{2}\left \langle x-x_{k}  ,(\mu I- A^{T}A)(x-x_{k})  \right \rangle \right.\\
&&&\left.+\alpha_{1k} \langle x,x_{k-1}-x_k\rangle+\alpha_{2k} \langle x,x_{k-2}-x_{k-1}\rangle\right \}\\
&&=\arg\min_{ x\in \mathbb{R}^l}&\left \{\frac{\mu }{2} \left \| x \right \| _{2}^{2}-\langle x,\mu x_{k}- A^{T}Ax_{k}\rangle-\left \langle x,A^{T}b  \right \rangle  +\langle x,\gamma(x_k-y_k)\rangle\right.\\
&&&\left.+\alpha_{1k} \langle x,x_{k-1}-x_k\rangle+\alpha_{2k} \langle x,x_{k-2}-x_{k-1}\rangle\right \},\\
\endaligned
$$
which has an explicit expression
$$x_{k+1} =\frac{1}{\mu } \left [\mu x_{k} - A^{T}Ax_{k}+ A^{T}b-\gamma(x_k-y_k)+\alpha_{1k}(x_{k}-x_{k-1})+\alpha_{2k}(x_{k-1}-x_{k-2}) \right ] .$$
\hspace*{\parindent}The $y$-subproblem corresponds to the following optimization problem\\
$$
\aligned
y_{k+1}
&&\in \arg\min_{y\in \mathbb{R}^m}&\left \{\eta \left \| y \right \| _{1/2}^{1/2}+\langle y,\gamma(y_{k}-x_{k+1})\rangle+\frac{\lambda }{2} \left \| y-y_{k}  \right \| _{2}^{2}+\beta_{1k} \langle y,y_{k-1}-y_k\rangle\right.\\
&&&\left.+\beta_{2k} \langle y,y_{k-2}-y_{k-1}\rangle\right \}\\
&&= \arg\min_{y\in \mathbb{R}^m} &\left \{ \eta \left \| y \right \| _{1/2}^{1/2}+\frac{\lambda }{2}\left \| y-\frac{1}\lambda \left [ \lambda y_{k }+\gamma(x_{k+1}-y_{k})+\beta_{1k} (y_{k}-y_{k-1} )\right.\right.\right.\\
&&&\left.\left.\left.+\beta_{2k} (y_{k-1}-y_{k-2} ) \right ]  {}  \right \|   ^{2}\right \} \\
&&= \mathcal{H}&\left (y_{k}+\frac{1}\lambda \left [\gamma(x_{k+1}-y_{k})+\beta_{1k} (y_{k}-y_{k-1} )+\beta_{2k} (y_{k-1}-y_{k-2} ) \right ]   ,\frac{\eta }{\lambda } \right ),
\endaligned
$$
where, for any $\kappa >0$, $\mathcal{H}(\cdot ,\kappa )$ is called the half shrinkage operator \cite{XCX} defined as\\
$${\mathcal{H}\rm}(a;\kappa )=\left \{ h_{\kappa } (a_{1} ) ,h_{\kappa } (a_{2} ),\dots ,h_{\kappa } (a_{n} )\right \} ^{T} $$
with
$$h_{\kappa } (a_{i{\tiny } } )=\left\{\begin{matrix}
  \frac{2a_{i} }{3} (1+cos(\frac{2\pi}{3} -\frac{2}{3}\varphi (a_{i})) ), & \left | a_{i}  \right |  >\frac{3}{4} \kappa ^{2/3}, \\
  0, &otherwise
\end{matrix}\right.$$
and $\varphi (a_{i} )=$arccos$\left ( \frac{\kappa }{8} \left ( \frac{\left | a_{i}  \right | }{3}  \right )^{-3/2}  \right ) .$\\

In this experiment, each entry of $A$ is drawn from the standard normal distribution, and then all columns of $A$ are normalized according to $L_{1/2}$ quasi-norm, noted that $\left \| A\right \|\le1$; we genergte a random sparse vector as $x$; the noise vector $\omega \sim N\left ( 0,10^{-3}I  \right )$ and the vector $b=Ax+\omega $; the regularization parameter $\eta=0.001\left \| A^{T}b  \right \|_{\infty }$ and the penalization parameter is set as $\gamma  =0.2$. By Lemma \ref{lem31}, we take $\alpha_1>0, \alpha_2>0$ such that $2(\alpha_1+\alpha_2)<\rho $, where $\rho =\min\{\mu - \left \| A \right \| ^{2}  -\gamma, \lambda  -\gamma\}$, $\mu =2,\lambda =1.5$. In Algorithm 3.1 and TiBAM with two-step inertial extrapolation, we select inertial parameters $\alpha_{1k}=\alpha_{2k}=\beta _{1k}=\beta _{2k}=0.99\rho/4$; in iBPALM with one-step inertial extrapolation, we select inertial parameters $\alpha_{1k}=\beta _{1k}=0.99\rho/2$; BPALM is the special case of Algorithm 3.1 without inertial extrapolation, that is, $\alpha_{1k}=\alpha_{2k}=\beta _{1k}=\beta _{2k}=0$; 
We take the origin point as the initial point for all algorithms and use
 $$E_k=\|x_{k+1}-x_k\|+\|y_{k+1}-y_k\|<10^{-4}$$ as the stopping criterion. In the following, we compare Algorithm 3.1 with TiBAM, iBPALM and BPALM for matrix $A$ of different dimensions with $(n,m)=(40,200)$ and $(n,m)=(100,500)$.

\begin{table}[htbp]
\caption{Compare Algorithm 3.1 with TiBAM, iBPALM and BPALM for $n = 40, m=200$. }\label{table1}
\begin{tabular}{llllllll}
\hline
Algorithm  &       &$b=Ax$   &                                &&       &$b=Ax+\omega$& \\ \cline{2-4} \cline{6-8}
n=40,m=200 & Iter.  & Time(s) &$\left \|x_{k}-y_k  \right \| $ && Iter.  & Time(s)  & $\left \|x_{k}-y_k  \right \|  $ \\ \hline
Alg. 3.1   & 713   & 4.1934  &$5.0602\times 10^{-5}$          && 810   & 6.5243   & $6.5298\times 10^{-5}$        \\
TiBAM     & 1110  & 7.6269  &$8.9963\times 10^{-5}$          && 1230  & 9.9036   & $9.1096\times 10^{-5}$  \\
iBPALM      & 1378  & 9.8594  &$9.3004\times 10^{-5}$          && 1577  & 12.0625  & $1.0239\times 10^{-4}$ \\ 
BPALM       & 2033  & 12.8073 &$1.7996\times 10^{-4}$          && 2276  & 14.7630  & $1.9939\times 10^{-4}$ \\ \hline
\end{tabular}
\end{table}

\begin{figure*}[!t]
    \centering
    \subfloat[$b=Ax$]{\includegraphics[width=3.0in]{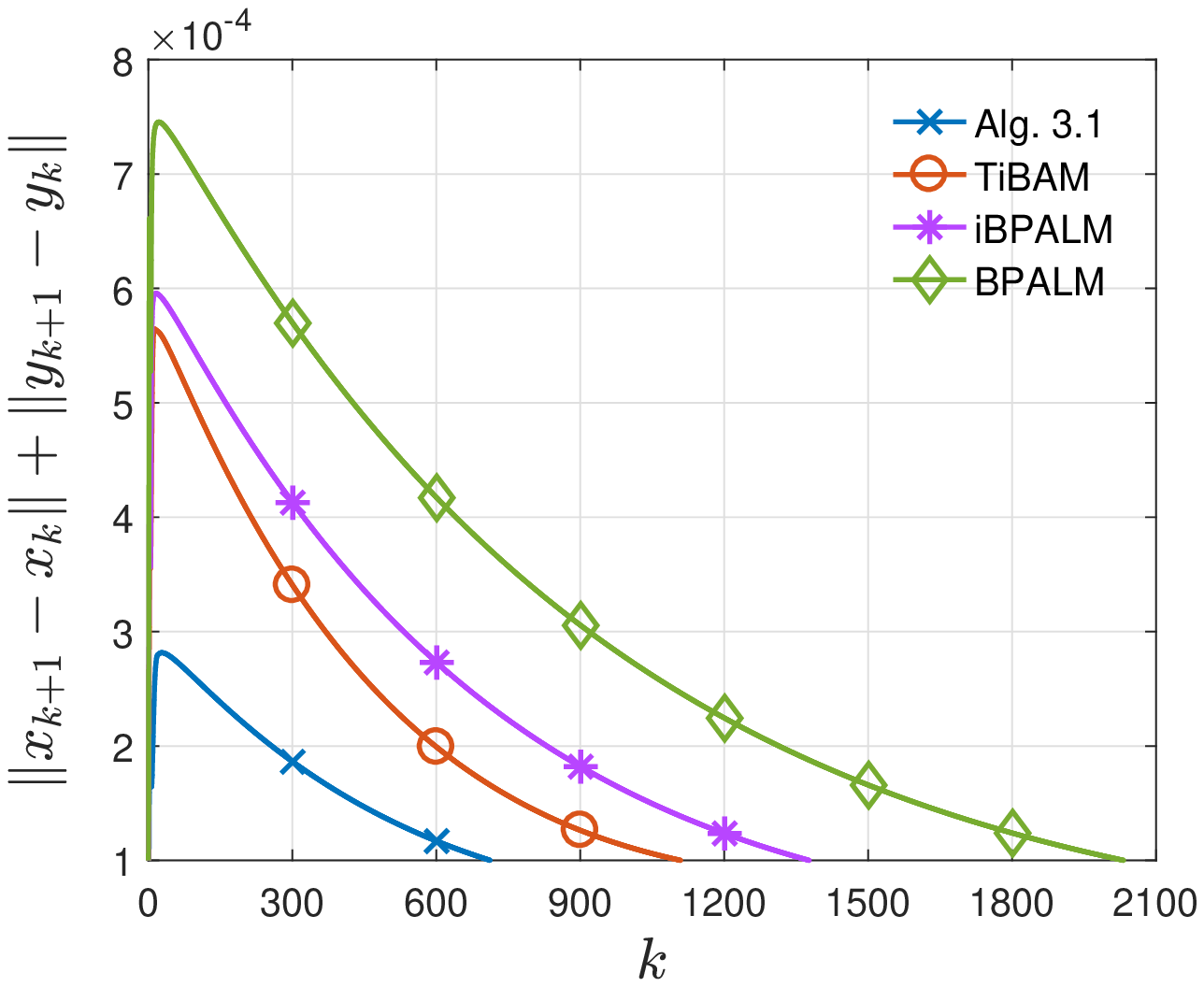}
    \label{fig_first_case}}
\hfil
\subfloat[$b=Ax+\omega$]{\includegraphics[width=3.0in]{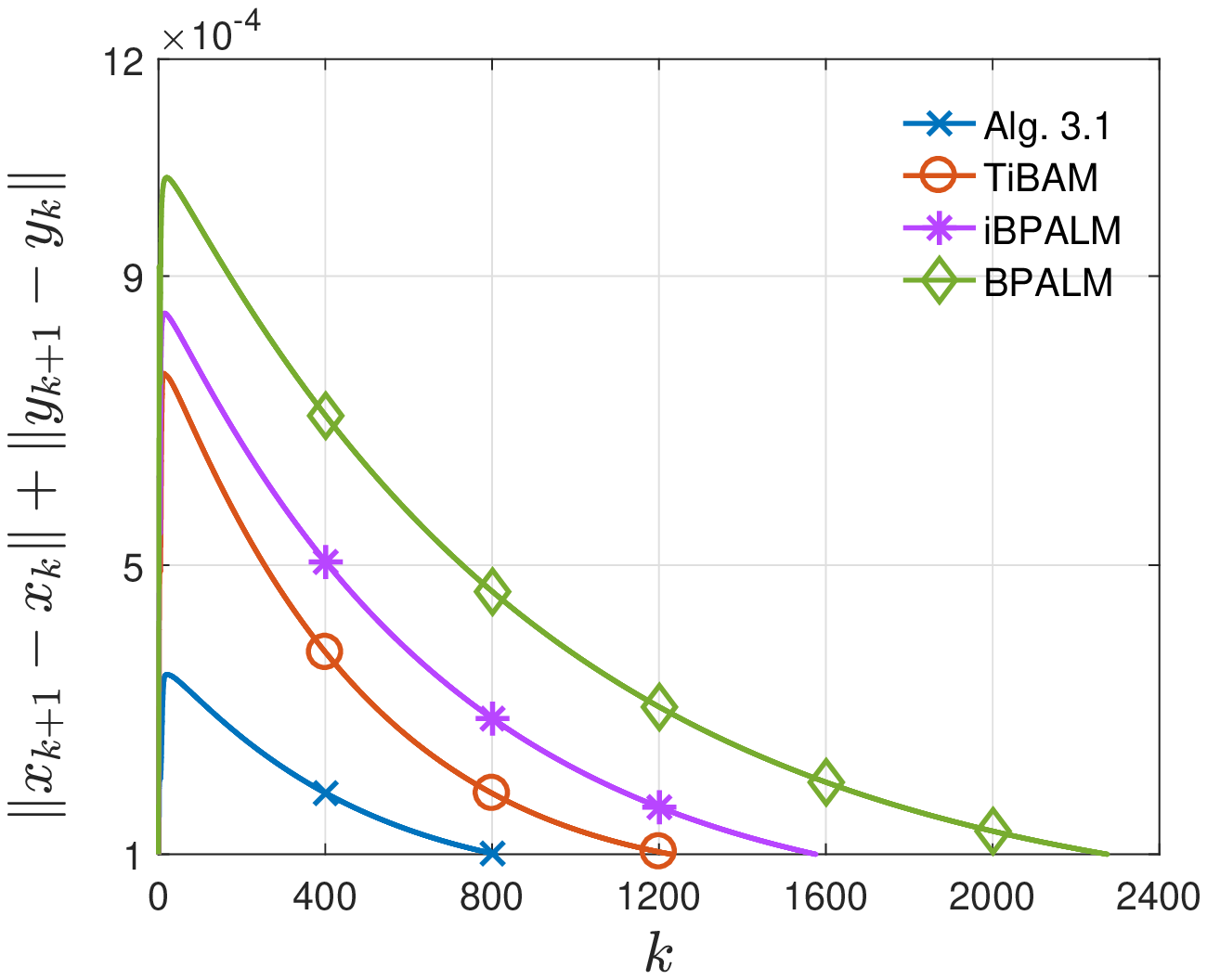}%
    \label{fig_second_case}}

    \caption{The value of $\|x_{k+1}-x_k\|+\|y_{k+1}-y_k\|$ versus the iteration numbers for 4.2 with $n = 40, m=200$.}
    \label{fig_sim}
\end{figure*}
Table 1 and Table 2 report the number of iteration, CPU time and the 2-norm of $x_k-y_k$ for $(n,m)=(40,200)$ and $(n,m)=(100,500)$, respectively. In the numerical results, ``Iter." denotes  the number of iterations. ``Time" denotes  the CPU time. It is obvious that Algorithm 3.1 has an advantage over TiBAM, iBPALM and BPALM in terms of iteration number and time for solving the above problem. In Figure 7 and Figure 8, the left picture records the downward trend of the error function without noise $(b=Ax)$ and the right picture shows the trend of the error function with noise $(b=Ax+\omega)$ for $(n,m)=(40,200)$ and $(n,m)=(100,500)$, respectively. They show the efficiency and advantage of Algorithm 3.1.

\begin{table}[htbp]
\caption{Compare Algorithm 3.1 with TiBAM, iBPALM and BPALM for $n = 100, m=500$.}\label{table2}
\begin{tabular}{llllllll}
\hline
Algorithm  &       &$b=Ax$   &                                &&       &$b=Ax+\omega$& \\ \cline{2-4} \cline{6-8}
n=100,m=500 & Iter.  & Time(s) &$\left \|x_{k}-y_k  \right \| $ && Iter.  & Time(s)  & $\left \|x_{k}-y_k  \right \|  $ \\ \hline
Alg. 3.1   & 1610  & 40.3756  &$9.9978\times 10^{-5}$          && 1920  & 46.5243   & $1.5527\times 10^{-4}$        \\
TiBAM     & 2108  & 48.5674  &$1.3827\times 10^{-4}$          && 2467  & 52.6250   & $1.8418\times 10^{-4}$  \\
iBPALM      & 2732  & 55.7853  &$2.0018\times 10^{-4}$          && 3196  & 59.1563   & $2.1329\times 10^{-4}$ \\ 
BPALM      & 3731  & 65.3309  &$2.9998\times 10^{-4}$          && 4023  & 69.7543   & $3.0698\times 10^{-4}$ \\ \hline
\end{tabular}
\end{table}

\begin{figure*}[!t]
    \centering
    \subfloat[$b=Ax$]{\includegraphics[width=3.0in]{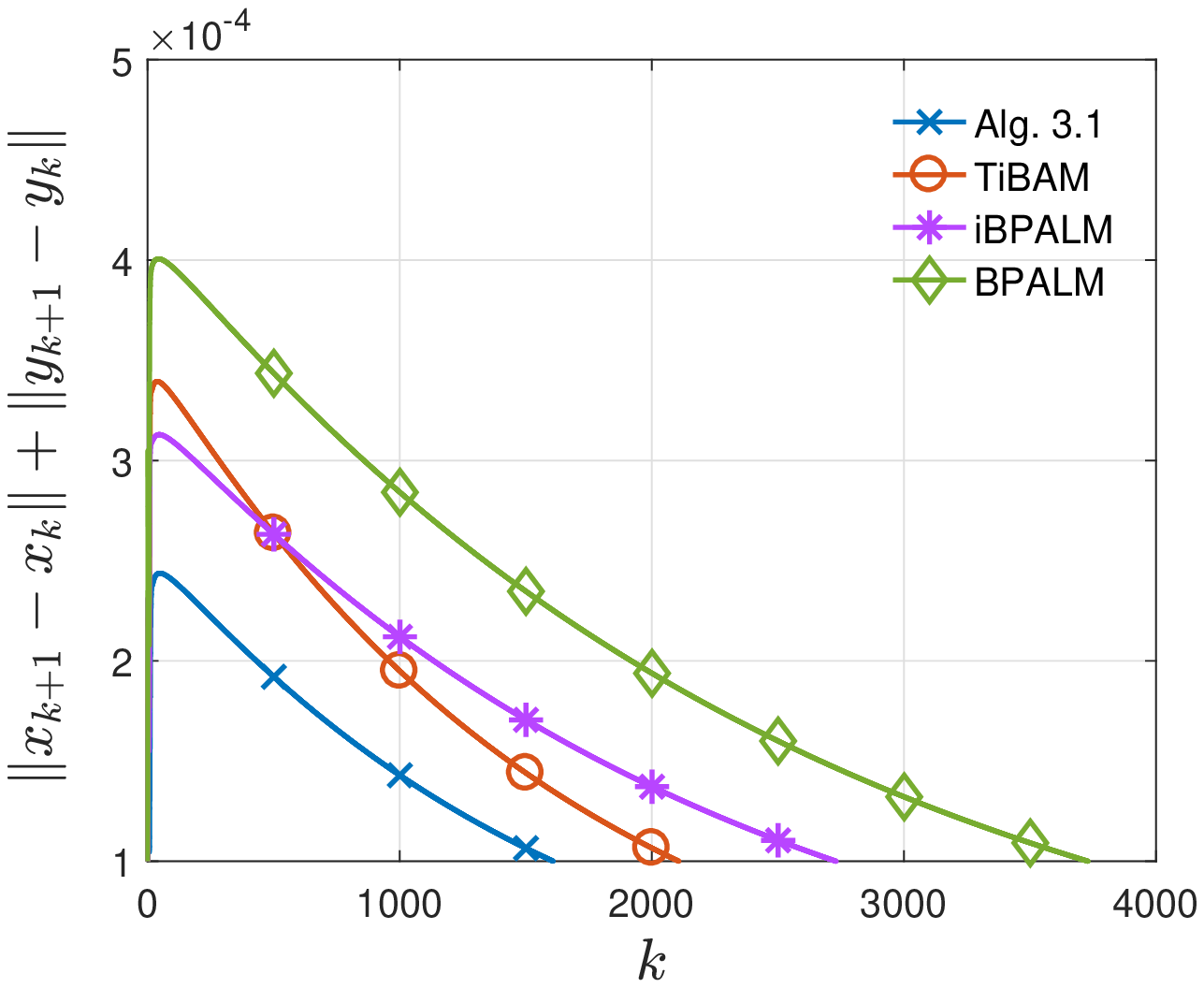}
    \label{fig_first_case}}
\hfil
\subfloat[$b=Ax+\omega$]{\includegraphics[width=3.0in]{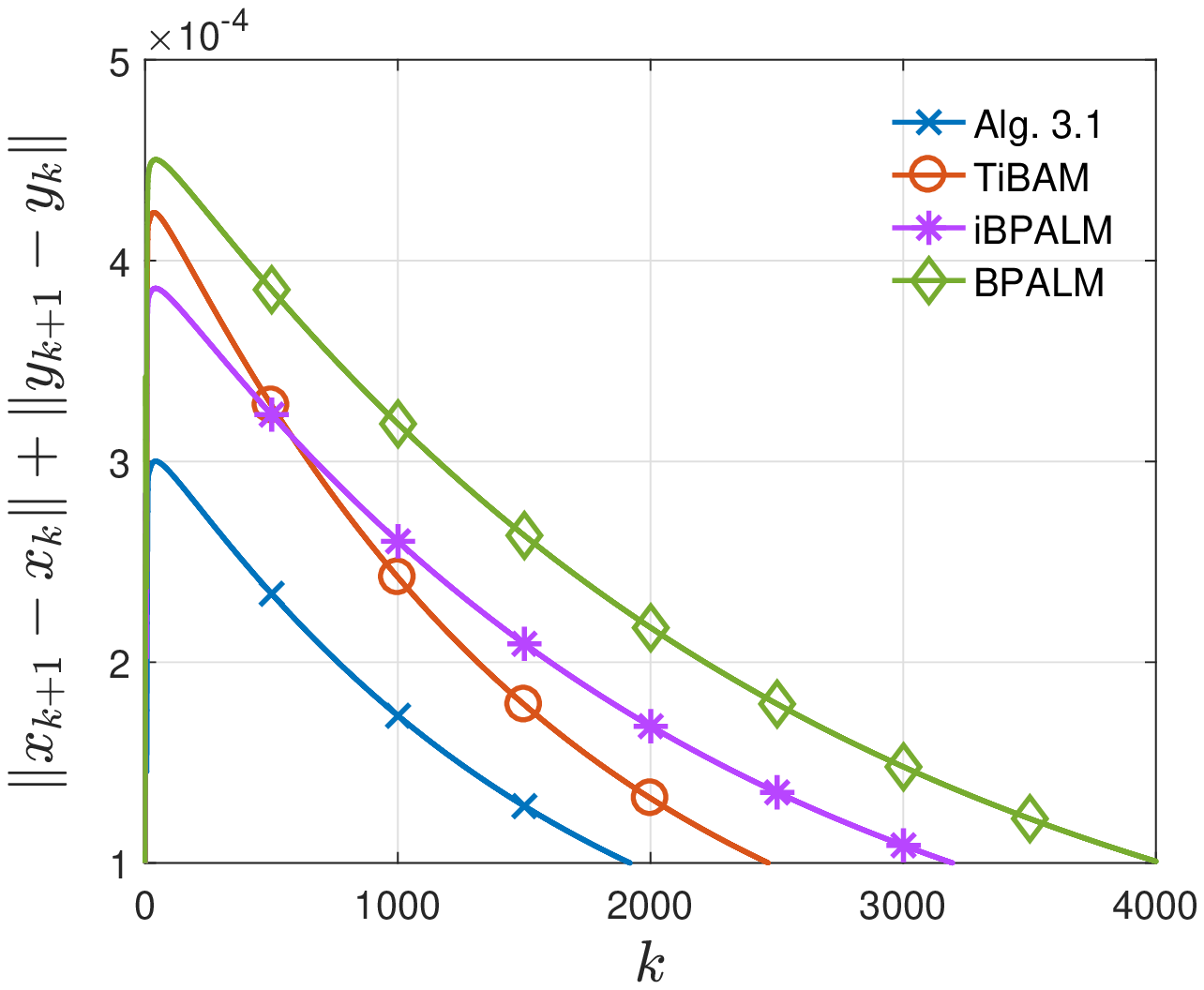}%
    \label{fig_second_case}}

    \caption{The value of $\|x_{k+1}-x_k\|+\|y_{k+1}-y_k\|$ versus the iteration numbers for 4.2 with $n = 100, m=500$.}
    \label{fig_sim}
\end{figure*}

\subsection{Nonconvex quadratic fractional programming}
\hspace*{\parindent}In this subsection, we provide some numerical experiments which we carried out in order to illustrate the numerical convergence of Algorithm 3.1 with different Bregman distances.
The following list are various functions with its Bregman distances:

(i) Define the function $\varphi_1(x)=\mu\sum_{i=1}^m x_i\ln x_i$ with domain

$$\text{dom}\varphi_1=\{x=(x_1, x_2,\cdots, x_m)^T\in \mathbb{R}^m: x_i > 0, i =1, 2,\cdots, m\}$$
 and range ran$\varphi_1=(-\infty,+\infty)$. Then
$$\nabla \varphi_1(x)=\mu(1+\ln(x_1), 1+\ln(x_2), \cdots, 1+\ln(x_m))^T$$
and the Bregman distance (the Kullback-Leibler distance) with respect to $\varphi_1$ is
$$D_{\varphi_1}(x, y) = \mu\sum_{i=1}^m\big(x_i\ln\big(\frac{x_i}{y_i}\big)+ y_i-x_i\big),\ \ \forall x,y\in \mathbb{R}_{++}^m.$$
\hspace*{\parindent}(ii) Define the function $\varphi_2(x)=-\mu\sum_{i=1}^m \ln x_i$ with domain

$$\text{dom}\varphi_2=\{x=(x_1, x_2,\cdots, x_m)^T\in \mathbb{R}^m: x_i > 0, i =1, 2,\cdots, m\}$$
 and range ran$\varphi_2=(-\infty,+\infty)$. Then
$$\nabla \varphi_2(x)=\mu(-\frac{1}{x_1}, -\frac{1}{x_2}, \cdots, -\frac{1}{x_m})^T$$
and the Bregman distance (the Itakura-Saito distance) with respect to $\varphi_2 $ is
$$D_{\varphi_2}(x, y) = \mu\sum_{i=1}^m\big(\frac{x_i}{y_i}-\ln\big(\frac{x_i}{y_i}\big)-1\big),\ \ \forall x,y\in \mathbb{R}_{++}^m.$$
\hspace*{\parindent}(iii) Define the function $\varphi_3(x)=\frac{\mu}{2}\|x\|^2$ with domain $\text{dom}\varphi_3=\mathbb{R}^m$
 and range ran$\varphi_3=[0,+\infty)$. Then $\nabla \varphi_3(x)=x$ and the Bregman distance (the squared Euclidean distance) with respect to $\varphi_3$ is
$$D_{\varphi_3}(x, y) = \frac{\mu}{2}\|x-y\|^2,\ \ \forall x,y\in \mathbb{R}^m.$$
It  is clear that $\varphi_i$ is $1$-strongly convex ($i=1,2,3$).\\

We consider the quadratic fractional programming problem \\
$$\min_{x\in C}\\ f(x):=\frac{x^{T}Mx+a^{T}x+c}{b^{T}x+d}  $$
with\\
$$C=\left \{ x\in \mathbb{R}^m :1\le x_{i}\le 3,i=1,2,\cdots,m  \right \} ,$$
where $M:\mathbb{R}^m\to  \mathbb{R}^m$ is a bounded linear operator, $a\in \mathbb{R}^m$, $b\in \mathbb{R}^m$, $c=-2$ and $d=20$. By \cite{BCV}, we know $f$ is pseudo-convex on the open set $E=\left \{ x\in \mathbb R^{m} :b^{T}x+d >0  \right \}$, if $C\subseteq E$, then $f$ is nonconvex.\\
\hspace*{\parindent}The quadratic fractional programming problem can be rewritten as (\ref{MP}):
$$\min\{f(x)+\frac{\gamma}{2}\|x-y\|^2_2+\iota_C(y):x,\ y\in\mathbb{R}^{m}\},$$
where $\iota_C$ is the indicator function on $C$.\\
\hspace*{\parindent}We now elaborate the $x$-subproblem and $y$-subproblem of Algorithm 3.1 respectively as follows.\\
$$
\begin{cases}
\aligned
x_{k+1}\in \arg\min_{ x\in \mathbb{R}^m}\{&f(x)+\langle x,\gamma(x_{k}-y_{k})\rangle+D_{\phi_1}(x,x_k)+\alpha_{1k} \langle x,x_{k-1}-x_k\rangle\\
&+\alpha_{2k} \langle x,x_{k-2}-x_{k-1}\rangle\},\\
y_{k+1}\in \arg\min_{ y\in \mathbb{R}^m}\{&\iota_C(y)+\langle y,\gamma(y_{k}-x_{k+1})\rangle+D_{\phi_2}(y,y_k)+\beta_{1k} \langle y,y_{k-1}-y_k\rangle\\
&+\beta_{2k} \langle y,y_{k-2}-y_{k-1}\rangle\}.
\endaligned
\end{cases}
$$

In this experiment, we take $\alpha_{1k}=\beta_{1k}=0.2$, $\alpha_{2k}=\beta_{2k}=0.3, \gamma=10,  \mu=36$. We perform the numerical tests of Algorithms 3.1 with different Bregman distance and use
 $$E_k=\|x_{k+1}-x_k\|+\|y_{k+1}-y_k\|<10^{-4}$$ as the stopping criterion. We use ``Alg(ij)" to denote Algorithm 3.1 with $\phi_1(x)=\varphi_i(x)$ and
 $\phi_2(x)=\varphi_j(x)$ $(1\leq i\leq 3, 1\leq j\leq 3)$. For the above quadratic fractional programming problem, we will give the numerical results of Algorithm 3.1 for different matrix $M$ and dimensions. We randomly selected the starting point, carried out 30 times randomly, and obtained the averaged iteration number and the averaged CPU time. 
The numerical results for the performance of Algorithm 3.1 with different Bregman distance are shown in Table 3 and Table 4. In the numerical results, ``Iter." denotes  the number of outer iterations. ``InIterx." and ``InItery." denote, respectively, the number of inner iterations for solving $x-$subproblem and $y-$subproblem. ``Time" denotes  the CPU time.

\noindent{\bf Problem 1}. We consider a fixed matrix $M=\begin{bmatrix}
  5&  -1&  2&  0& 2\\
  -1&  6&  -1&  3& 0\\
  2&  -1&  3&  0& 1\\
  0&  3&  0&  5& 0\\
  2&  0&  1&  0&4
\end{bmatrix}$, and give the vectors $a=(1,2,-1,-2,1)^{T},b=(1,0,-1,0,1)^{T}$, in this case, $C\subseteq E$. For different bregman distances, the numerical results are given for one-step inertial and two-step inertial extrapolation in Table 3 below, where the one-step inertial extrapolation $\alpha_{1k}=\beta_{1k}=0.5$.

\begin{table}[h]\caption{Numerical results of Algorithm 3.1 for fixed matrix $M$}\label{table3}
\begin{center}
{\scriptsize
\begin{tabular} {c c c c c c c c c c}
\hline &Alg.(11)&Alg.(12)&Alg.(13)&Alg.(21)&Alg.(22)&Alg.(23)&Alg.(31)&Alg.(32)&Alg.(33)\\
\hline
$one-step$&&&&&&&&&\\
Iter.&662    &675  &657    &307   &308   &305   &3649   &3497  &3918\\ 
InIterx.&992 &1003   &988    &493   &497 &490    &3812   &3724   &4090\\
InItery.&704  &719  &699    &307   &310  &305    &3907   &3731   &4237\\
Time(s)&0.4063&0.3750 &0.2031  &0.1205 &0.2188  &0.0718  &0.9188 &0.8563 &1.1938\\
\hline
$two-step$&&&&&&&&&\\
Iter.&529     &534   &523  &294   &295   &293   &2984  &2828  &3513\\ 
InIterx.&830 &834   &825   &493  &495  &491  &3237  &3114  &3698\\
InItery.&559 &565   &552   &294   &296   &292   &3193  &3019  &3765\\
Time(s)&0.2813&0.3281&0.1563  &0.0938&0.1938&0.0681 &0.8063&0.7501&1.0531\\
\hline
\end{tabular}
}
\end{center}
\end{table}

It can be seen that the Kullback-Leibler distance and the Itakura-Saito distance have computational advantage than the squared Euclidean distance for our proposed  Algorithm 3.1 in terms of number of iteration and CPU time. For one-step inertial, the computation result shows that Alg(23) ($\phi_1(x)=\varphi_2(x)$, $\phi_2(x)=\varphi_3(x)$) has the best performance while Alg(33) ($\phi_1(x)=\varphi_3(x)$, $\phi_2(x)=\varphi_3(x)$) has the poorest performance for solving the quadratic fractional programming problem. For two-step inertial, the computation result shows that Alg(23) ($\phi_1(x)=\varphi_2(x)$, $\phi_2(x)=\varphi_3(x)$) has the best performance while 
Alg(33) ($\phi_1(x)=\varphi_3(x)$, $\phi_2(x)=\varphi_3(x)$) has the poorest performance for solving the quadratic fractional programming problem.

\noindent{\bf Problem 2}. We randomly select matrix $M\in \mathbb R^{m\times m} $, and vectors $a\in \mathbb R^{m}, b\in \mathbb R^{m}$. For $m=5,20,50,100$, we give their numerical results for Algorithm 3.1 with different Bregman distances in Table 4.

\begin{sidewaystable}[htbp]\caption{Numerical results of Algorithm 3.1 for random matrix $M$ }\label{table4}
\begin{tabular}{lllllllllllllllllll}
\hline
         &  & Alg.(11) &  & Alg.(12) &  & Alg.(13) &  & Alg.(21) &  & Alg.(22) &  & Alg.(23) &  & Alg.(31) &  & Alg.(32) &  & Alg.(33) \\ \hline
$m=5$    &  &          &  &          &  &          &  &          &  &          &  &          &  &          &  &          &  &          \\
Iter.    &  & 431   &  & 432   &  & 430   &  &106    &  &111    &  &105    &  &2069    &  &1760    &  &2625    \\
InIterx. &  & 812   &  & 812   &  & 811   &  &406    &  &410    &  &405    &  &2348    &  &2078    &  &2851     \\
InItery. &  & 469   &  & 471   &  & 467   &  &298    &  &301    &  &297    &  &2286    &  &1927    &  &2935     \\
Time(s)  &  & 0.3125   &  & 0.2438   &  & 0.1813   &  & 0.1250   &  & 0.1156   &  & 0.0575   &  & 0.9844   &  & 0.7969   &  & 1.0156  \\ \hline
$m=20$   &  &          &  &          &  &          &  &          &  &          &  &          &  &          &  &          &  &          \\
Iter.    &  &445    &  &445    &  &445    &  &158    &  &156    &  &155    &  &2921    &  &2437    &  &3654    \\
InIterx. &  &872    &  &872    &  &872    &  &427    &  &429    &  &410    &  &3223    &  &2770    &  &3916    \\
InItery. &  &471    &  &470    &  &470    &  &309    &  &308    &  &307    &  &3115    &  &2588    &  &3924    \\
Time(s)  &  & 0.9375   &  & 0.2813   &  & 0.1938   &  &0.1388    &  &0.1406    &  &0.0781    &  &1.1875   &  &0.9531    &  &1.2188     \\\hline
$m=50$   &  &          &  &          &  &          &  &          &  &          &  &          &  &          &  &          &  &          \\
Iter.    &  &468    &  &468    &  &468    &  &315    &  &312    &  &310    &  &3488    &  &2812    &  &4400    \\
InIterx. &  &907    &  &907    &  &907    &  &483    &  &478    &  &475    &  &3788    &  &3149    &  &4656    \\
InItery. &  &485    &  &485    &  &485    &  &313    &  &311    &  &310    &  &3704    &  &2978    &  &4695    \\
Time(s)  &  &1.1250    &  &0.3750    &  &0.6250    &  &0.1563    &  &0.1750    &  &0.1094    &  &1.9063    &  &1.8906    &  &2.9219     \\\hline
$m=100$  &  &          &  &          &  &          &  &          &  &          &  &          &  &          &  &          &  &          \\
Iter.    &  & 651   &  & 638   &  & 657   &  &382  &  &379   &  &377  &  & 4692   &  & 3719   &  & 5529   \\
InIterx. &  & 960   &  & 947   &  & 966   &  &649   &  &643   &  &641   &  & 4903   &  & 3976   &  & 5712   \\
InItery. &  & 694   &  & 679   &  & 700   &  &385   &  &380   &  &377   &  & 4944   &  & 3911   &  & 5842   \\
Time(s)  &  &2.8750    &  &1.3906    &  &2.0313    &  &1.4375    &  &0.7656    &  &0.5938   &  &3.6094    &  &2.2500    &  &4.6875     \\\hline
\end{tabular}
\end{sidewaystable}
From Table 4, it can be seen from that the numerical results show that Alg(23) ($\phi_1(x)=\varphi_2(x)$, $\phi_2(x)=\varphi_3(x)$) performs best when the dimension of the matrix $M$ is low, and Alg(12) ($\phi_1(x)=\varphi_1(x)$, $\phi_2(x)=\varphi_2(x)$) performs a little better when the dimension of the matrix $M$ is slightly higher.
\section{Conclusion}
{Based on the proximal alternating linearized minimization algorithm, two-step inertial Bregman proximal alternating linearized minimization algorithm is proposed to solve nonconvex and nonsmooth nonseparable optimization problems. We construct appropriate benefit function and obtain that the generated sequence is globally convergent to a critical point, under the assumptions that the objective function satisfies the Kurdyka--{\L}ojasiewicz inequality and the parameters satisfy certain conditions. In numerical experiments, we choose appropriate Bregman distance such that the solution of subproblems have closed form for solving the sparse signal recovery problem. We also apply different Bregman distance to solve quadratic fractional programming problem. Numerical results are reported to show the effectiveness of the proposed algorithm.}


\end{document}